\newtheorem{thm}{Theorem}[subsection]
\newtheorem{lem}[thm]{Lemma}
\newtheorem{cor}[thm]{Corollary}
\newtheorem{prop}[thm]{Proposition}
\theoremstyle{definition}
\newtheorem{defn}[thm]{Definition}
\theoremstyle{remark}
\newtheorem{rem}[thm]{Remark}
\numberwithin{equation}{section}
 \tikzset{help lines/.style={step=#1cm,very thin, color=gray},
help lines/.default=.5} 
\tikzset{thick grid/.style={step=#1cm,thick, color=gray},
thick grid/.default=1} 
\def\el{\ell}
\def\LL{\Lambda}
\def\a{\alpha}
\def\b{\beta}
\def\g{\gamma}
\def\e{\epsilon}
\def\f{\varphi}
\def\t{\tau}
\def\th{\theta}
\def\ul{\underline}
\newcommand{\xrarrow}{\xrightarrow} 
\newcommand{\ot}{\leftarrow}
 \newcommand{\into}{\hookrightarrow}
\DeclareMathOperator{\Hom}{Hom}%
\DeclareMathOperator{\Ext}{Ext}%
\DeclareMathOperator{\End}{End}%
 \DeclareMathOperator{\mesh}{mesh}  
\DeclareMathOperator{\Aut}{Aut}
\newcommand{\field}[1]{\mathbb{#1}}
\newcommand{\ZZ}{\ensuremath{{\field{Z}}}}
\newcommand{\RR}{\ensuremath{{\field{R}}}}
\newcommand{\commentout}[1]{}
\newcommand{\cB}{\ensuremath{{\mathcal{B}}}}
\newcommand{\cC}{\ensuremath{{\mathcal{C}}}}
\newcommand{\cD}{\ensuremath{{\mathcal{D}}}}
\newcommand{\cM}{\ensuremath{{\mathcal{M}}}}
\newcommand{\cP}{\ensuremath{{\mathcal{P}}}}
\newcommand{\cT}{\ensuremath{{\mathcal{T}}}}
\newcommand{\cU}{\ensuremath{{\mathcal{U}}}}
\newcommand{\cV}{\ensuremath{{\mathcal{V}}}}
\newcommand{\cX}{\ensuremath{{\mathcal{X}}}}
\title{Continuous cluster categories II:\\Continuous Cluster-Tilted Categories}
\author{Kiyoshi Igusa}
\address{Department of Mathematics, Brandeis University, Waltham, MA 02454}
\email{igusa@brandeis.edu}
\thanks{The first author was supported by NSA Grant \#H98230-13-1-0247 when this paper was written.}
\author{Gordana Todorov}
\address{Department of Mathematics, Northeastern University, Boston, MA 02115}
\email{g.todorov@northeastern.edu}
\thanks{The second author was supported by NSF Grants \#DMS-1103813, \#DMS-0901185, when this paper was written.}
\keywords{Cluster categories, string modules, Jacobian algebra, infinitesimal Auslander-Reiten translation}
\subjclass[2010]{
18E30:16G20}
\begin{document}

\begin{abstract}
We show that the quotient of the continuous cluster category $\cC_\pi$ modulo the additive subcategory generated by any cluster is an abelian category and we show that it is isomorphic to the category of infinite length modules over the endomorphism ring of the cluster. These theorems extend the theorems of Caldero-Chapoton-Schiffler and Buan-Marsh-Reiten for cluster categories to the continuous cluster categories of type $A$. These results will be generalized in a series of forthcoming joint papers of the two authors with Job Rock.
\end{abstract}

\maketitle

\tableofcontents



\section*{Introduction}

This paper is based on a lecture given by the first author at the University of Sherbrooke with the title ``Continuous cluster categories II'' and was presented at an AMS sectional meeting in Iowa under the title ``Continuous spaced-out cluster category.'' Both lectures followed lectures by the second author entitled ``Continuous cluster categories'' which explained the basic constructions and properties of the continuous derived category and continuous cluster categories. 

Since the first paper on this subject is becoming too long, we decided to start a second paper which begins with a review, just as in the lecture. This second paper will concentrate on the interpretation of objects in the continuous cluster-tilted category $\cC_\pi/\cT$ as modules over the  Jacobian algebra of an infinite quiver with potential. This was inspired by \cite{BZ11} which builds on an earlier paper {ABCP} and we thank Thomas Br\"ustle for explaining his work to the authors so enthusiastically.

Section 1 is a review of basic definitions. The \emph{continuous cluster category} is a triangulated category $\cC=\cC_\pi$ which has as indecomposable objects the points of an open Moebius band $\cM$. The automorphism group $\Aut(\cC)$ is an extension of the group of orientation preserving homeomorphisms of the circle $S^1=\RR/2\pi\ZZ$. A \emph{cluster} is a discrete maximal compatible subset $\cT$ of $\cM$. Since $\Aut(\cC)$ acts transitively on the set of clusters in $\cC$ up to isomorphism, we only consider the standard cluster $\cT_0$. The \emph{rational cluster category} $\cX$ is defined to be the additive full subcategory generated by objects which can be obtained from the standard cluster by a finite sequence of mutations. The \emph{rational cluster-tilted category} is the quotient $\cX/\cT_0$ and the \emph{continuous cluster-tilted category} is $\cC/\cT$.

In section 2 we use ``ends'' and ``supports'' to prove that these cluster-tilted categories are abelian. The objects of the continuous cluster category correspond to pairs of distinct points on the circle $S^1$ which we call its \emph{ends}. An end is called \emph{rational} if it is the end of an object of $\cT$. An object of $\cC$ is rational (lies in $\cX$) if and only if its ends are rational. We do induction on the number of irrational ends of $Y$ to show that any morphism $X\to Y$ in $\cC/\cT$ has a kernel. Since $\cC/\cT$ is isomorphic to its opposite, it also has cokernels.

The \emph{support} of an object $X\in\cC/\cT$ is the set of objects $S$ in the cluster $\cT$ so that 
\[
	\Hom_{\cC/\cT}(\t^{-1}S,X)\neq0
\]
where $\t^{-1}S$ is the ``infinitesimal Auslander-Reiten translation'' of $S$. An object of $\cC/\cT$ lies in $\cX/\cT$ if and only if its support is finite. We show that a morphism $X\to Y$ in $\cC/\cT$ is a monomorphism or epimorphism if and only if it is a monomorphism or epimorphism on supports. To show that the cluster-tilted categories $\cX/\cT$ and $\cC/\cT$ are abelian, we show that a morphism is an isomorphism if and only if it is an isomorphism on supports.

In the third section of this paper we identify $\cX/\cT$ and $\cC/\cT$ with categories of string modules over the Jacobian algebra $\LL$ of an infinite quiver with potential. By Butler and Ringel \cite{BR87} every finite subquiver generates a string algebra. This extends without much trouble to the infinite case. The vertices in the infinite quiver correspond to the objects of the cluster $\cT$ and an indecomposable object in $\cC/\cT$ corresponds to the unique string module with the same support. This gives an isomorphism between the category $\cX/\cT$ and the category of finite length modules over $\LL$. This correspondence also gives an isomorphism between $\cC/\cT$ and the category $Rep_0\LL$ of infinite string modules without an infinite sequence of inward pointing arrows.

\section{Review}

We recall the construction of the continuous cluster categories $\cC_c$ for $0<c\le\pi$ and we note that for $c'<c$ the category $\cC_{c'}$ is a quotient of $\cC_c$. We will then concentrate on the largest cluster category $\cC_\pi$. See \cite{IT09}, \cite{IT10} for details. All of our categories will be Krull-Schmidt $K$-categories which are strictly monoidal with respect to direct sum. In other words, all objects are finite formal sums of indecomposable objects which have local endomorphism rings. We will say that the category is ``strictly additive'' to emphasize that this is with respect to direct sum and not tensor product. All functors will also be strictly additive so that they are uniquely determined by their restriction to the full subcategory of indecomposable objects. Also, with one exception, all triangulated functors $F$ will be \emph{strictly triangulated} in the sense that $FT=TF$ and $F$ takes distinguished triangles to distinguished triangles. The exception is the triangulated embedding of the standard cluster category of type $A_n$ into the corresponding continuous cluster category.

\subsection{Construction of the continuous cluster categories}

Let $\cP$ be the strictly additive $K$-category with one indecomposable object $P_x$ for every real number $x$.  Morphism are given by
\[
	\Hom_\cB(P_x,P_y)=\begin{cases} K & \text{if $x\le y$}\\
    0& \text{otherwise}
    \end{cases}
\]
Composition is given by multiplication. We will refer to any additive full subcategory of $\cP$ as a \emph{linear category}. Each linear category is abelian and all exact sequences split.

Let $\cB$ be the strictly additive exact category given as follows. The indecomposable objects of $\cB$ are pairs $(P_x,P_y)$ which we denote simply by $(x,y)\in\RR^2$. Homomorphisms are given by
\[
	\Hom_\cB((x,y),(x',y'))=\begin{cases} K & \text{if $x\le x'$ and $y\le y'$}\\
    0& \text{otherwise}
    \end{cases}
\]
Composition is given by multiplication of scalars. The morphism corresponding to $1\in K$ is called the \emph{basic morphism}. A sequence $0\to A\to B\to C\to 0$ in $\cB$ is \emph{exact} if it is split exact in each coordinate, i.e., if its image under both projection functors $\pi_1,\pi_2:\cB\to\cP$ are split exact in $\cP$. For example, we have the nonsplit exact sequence:
\begin{equation}\label{nonsplit sequence in B}
	0\to (x,y)\xrarrow{\binom11} (x,y')\oplus (x',y)\xrarrow{(1,-1)} (x',y')\to0
\end{equation}
for any $x< x'$ and $y< y'$. This implies that the number of indecomposable summands of the middle term $B$ will always be equal to the number of summands in $A\oplus C$. The categories $\cP$ and $\cB$ can be interpreted as categories of representations of the ``continuous quiver'' $\RR$.

For any $c>0$ let $\cB_c$ be the additive full subcategory generated by all $(x,y)$ where $|y-x|\ge c$. The \emph{continuous derived category} $\cD_c$ is defined to be the full subcategory of the quotient category $\cB/\cB_c$ generated by the nonzero indecomposable objects of $\cB/\cB_c$ which are $(x,y)$ with $|y-x|<c$. Thus $\cD_c\cong \cB/\cB_c$ has exactly one object in every isomorphism class of objects in $\cB/\cB_c$. Since $\cB_c$ is an approximation subcategory for $\cB$, we have the following. (See \cite{BM94}.)

\begin{thm}
$\cD_c$ is a triangulated category. If $X=(x,y)$ then $TX=(y+c,x+c)$.
\end{thm}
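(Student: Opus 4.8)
The plan is to deduce the triangulated structure from the general machinery of Beligiannis--Marmaridis \cite{BM94}: the quotient of an exact category by a functorially finite subcategory that both generates and cogenerates the admissible short exact sequences acquires a triangulated structure, whose suspension is computed from left approximations and whose loop functor is computed from right approximations. So there are three tasks: (i) verify that $\cB_c$ really is such an ``approximation subcategory'' of $\cB$; (ii) quote the general theorem to get the triangulated structure on $\cD_c\cong\cB/\cB_c$; and (iii) read off $TX$ from an explicit minimal left $\cB_c$-approximation.

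For (i), I would show that for an indecomposable $X=(x,y)$ with $|y-x|<c$ the morphism
\[
	X\xrarrow{\binom11}(x,\,x+c)\oplus(y+c,\,y),
\]
whose two components are the basic morphisms (these exist precisely because $|y-x|<c$ forces $y\le x+c$ and $x\le y+c$), is a minimal left $\cB_c$-approximation of $X$ and, simultaneously, an admissible monomorphism for the coordinatewise-split exact structure on $\cB$. Both are short checks. Any morphism from $X$ to a long object $(a,b)$ with $b-a\ge c$ factors through the first summand since then $x+c\le b$, and one with $a-b\ge c$ factors through the second since then $y+c\le a$, so the map is a left $\cB_c$-approximation, and it is minimal because neither summand can be dropped. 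It is admissible because $\pi_1$ of it is $\binom{\mathrm{id}}{\iota}\colon P_x\to P_x\oplus P_{y+c}$ and $\pi_2$ of it is $\binom{\jmath}{\mathrm{id}}\colon P_y\to P_{x+c}\oplus P_y$, and both are split monomorphisms in $\cP$. Dually, using the self-duality $(u,v)\mapsto(-v,-u)$ of $\cB$, which preserves $\cB_c$, the object $X$ has a minimal right $\cB_c$-approximation $(y-c,\,y)\oplus(x,\,x-c)\twoheadrightarrow X$ which is an admissible epimorphism; decomposable objects are handled by direct sums. This is exactly the assertion that $\cB_c$ is an approximation subcategory of $\cB$ (and could alternatively be cited from \cite{IT09}, \cite{IT10}).

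Granting (i), task (ii) is a citation: by \cite{BM94}, with $\cB_c$ a functorially finite subcategory that both generates and cogenerates the admissible exact sequences, $\cD_c\cong\cB/\cB_c$ is triangulated, its distinguished triangles being the images of the admissible short exact sequences of $\cB$ having a term in $\cB_c$, with suspension $TX=\coker(X\hookrightarrow W^X)$ for a left $\cB_c$-approximation $W^X$ and loop functor $\Omega X=\ker(W_X\twoheadrightarrow X)$ for a right $\cB_c$-approximation $W_X$; the content of the theorem is that $T$ and $\Omega$ are mutually quasi-inverse autoequivalences and that the octahedral axiom holds. For (iii): applying $\pi_1$ and $\pi_2$ to the short exact sequence $0\to(x,y)\to(x,x+c)\oplus(y+c,y)\to C\to0$ and using that it is split exact in each coordinate gives $\pi_1 C\cong P_{y+c}$ and $\pi_2 C\cong P_{x+c}$, so $C=(y+c,x+c)$ and hence $TX=(y+c,x+c)$; since $|(x+c)-(y+c)|=|x-y|<c$ this again lies in $\cD_c$, and the dual computation gives $\Omega X=(y-c,x-c)=T^{-1}X$, confirming that $T$ is invertible.

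The step I expect to be the main obstacle is matching the precise hypotheses of the Beligiannis--Marmaridis construction so that it yields a genuinely \emph{two-sided} triangulated category rather than merely a left triangulated or pretriangulated one. The verbatim statement in \cite{BM94} produces a left triangulated structure out of a contravariantly finite subcategory; to obtain a triangulated category one must know in addition that $\cB_c$ is covariantly finite and that left and right $\cB_c$-approximations can be chosen to be admissible monomorphisms and epimorphisms respectively --- which is exactly why part (i) insists on admissibility and not just the approximation property --- so that the resulting suspension and loop functors become inverse to one another. By contrast the explicit approximations, the coordinatewise-splitting checks, and the cokernel computation in (iii) are routine.
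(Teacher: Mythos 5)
Your proposal is correct and follows the same route the paper itself takes: the paper's entire justification is the remark that $\cB_c$ is an approximation subcategory of $\cB$ together with the citation of \cite{BM94}, and your explicit minimal approximations $X\to(x,x+c)\oplus(y+c,y)$ and $(y-c,y)\oplus(x,x-c)\to X$, the coordinatewise-split admissibility checks, and the cokernel computation giving $TX=(y+c,x+c)$ are exactly the details being suppressed (they specialize the paper's exact sequence (\ref{nonsplit sequence in B}) with $y'=x+c$, $x'=y+c$). Your closing caveat about needing two-sided (covariantly and contravariantly finite) approximations to upgrade the left-triangulated structure of \cite{BM94} to a genuine triangulation is precisely the point the paper glosses over, and your admissibility verifications are what resolve it.
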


We use the sign convention that ``positive triangles'' (or ``up-right-up'' triangles) have positive signs on their morphisms. Thus for any $x-c<y<z<x+c$ the sequence
\[
	(x,y)\xrarrow 1 (x,z)\xrarrow 1 (y+c,z)\xrarrow 1 (y+c,x+c),
\]
with all morphisms being basic morphism, is a distinguished triangle. This comes from the exact sequence (\ref{nonsplit sequence in B}) when $z=y'$ and $x'=y+c$ since $(x',y)=0$ in that case. Also the ``negative triangle'' (or ``right-up-right'' triangle)
\begin{equation}\label{negative triangle}
	(x,y)\xrarrow 1 (w,y)\xrarrow 1 (w,x+c)\xrarrow{-1} (y+c,x+c),
\end{equation}
which comes from (\ref{nonsplit sequence in B}) when $x'=w,y'=x+c$  is distinguished. Up to isomorphism, these are all the distinguished triangles in $\cD_c$ with each term being indecomposable.

Next, we create the \emph{doubled category} $\cD_c^{(2)}$ given by adding an additional copy $(x,y)'$ of every indecomposable object $(x,y)$ in $\cD_c$ together with an isomorphism $\eta:(x,y)'\cong (x,y)$ with the property that $T$ anticommutes with $\eta$. In other words, $T(X')=(TX)'$ but
\[
	T\eta=-\eta_{TX'}:TX'\to TX
\]
This implies that a negative triangle with positive signs:
\[
	(x,y)'\xrarrow 1 (w,y)\xrarrow 1 (w,x+c)\xrarrow{1} (y+c,x+c)'
\]
is a distinguished triangle since it is isomorphic to the triangle (\ref{negative triangle}). We say that $(x,y)'$ has the \emph{opposite parity} as $(x,y)$ and we define $(x,y)''=(x,y),\eta'=\eta^{-1}:(x,y)\cong(x,y)'$.

For any $d\ge c$ the continuous cluster category $\cC_{c,d}$ is defined to be the orbit category $\cC_{c,d}:=\cD_c^{(2)}/F_d$ where $F_d$ is the triangulated functor on the doubled category $\cD_c^{(2)}$ defined by
\[
	F_d(x,y)=(y+d,x+d)'
\]
Since $F_d$ takes positive triangles to negative triangles, the change in parity is necessary to make $F_d$ strictly triangular. We denote the orbit of $(x,y)$ in $\cC_{c,d}$ by $M(x,y)$. So, $M(x,y)=M(y+d,c+d)'$.

\begin{thm}
The category $\cC_{c,d}$ is triangulated so that the orbit map $\cD_c^{(2)}\to \cC_{c,d}$ is strictly triangulated and all distinguished triangles $A\to B\to C\to TA$ with indecomposable $A,B,C$ are images of distinguished triangles in $\cD_c^{(2)}$.
\end{thm}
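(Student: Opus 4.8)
The plan is to declare a triangle of $\cC_{c,d}$ to be distinguished precisely when it is isomorphic to the image, under the orbit functor $q\colon\cD_c^{(2)}\to\cC_{c,d}$, of a distinguished triangle of $\cD_c^{(2)}$, and then to verify the axioms of a triangulated category. First one notes that the shift descends: since $F_d$ is \emph{strictly} triangulated one has $F_dT=TF_d$, so $T$ commutes with the $\ZZ$-action defining the orbit category and induces a shift on $\cC_{c,d}$ with $qT=Tq$. With this class of triangles, invariance under isomorphism and the identity triangle $X\to X\to 0\to TX$ are immediate, strict triangulation of $q$ is a tautology, and the rotation axiom is inherited from $\cD_c^{(2)}$; the only subtlety in rotation is the sign $-1$ produced by rotating three times, which is absorbed by the parity-changing isomorphism $\eta$. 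Indeed this is precisely the purpose of passing to the doubled category: it makes room for $F_d$ — which sends positive triangles to negative ones, cf.\ \eqref{negative triangle} — to be made strictly triangulated. Thus the real content is the existence of a triangle on each morphism, the morphism-of-triangles axiom, and the octahedral axiom.

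For existence, recall that $\Hom_{\cC_{c,d}}(X,Y)=\bigoplus_{n\in\ZZ}\Hom_{\cD_c^{(2)}}(X,F_d^nY)$, and that for indecomposable $X=(x,y)$ and indecomposable $Y$ this sum is \emph{finite}: as $n\to-\infty$ the coordinates of $F_d^nY$ tend to $-\infty$, killing $\Hom_\cB$, while as $n\to+\infty$ they tend to $+\infty$ and the resulting basic morphism of $\cB$ factors through $\cB_c$, hence vanishes in $\cD_c=\cB/\cB_c$; so only finitely many summands survive (for these type-$A$ categories, at most two consecutive values of $n$). By the standard reduction — the morphisms admitting a triangle form a class closed under isomorphism and direct sums, and it suffices to treat indecomposable source and target — we may assume $X$ and $Y$ indecomposable, and then a mapping cone for $X\to Y$ is produced by a direct computation with the nonsplit exact sequences \eqref{nonsplit sequence in B} of $\cB$; as in the preceding theorem, its third term is indecomposable or a sum of two indecomposables. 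This gives the existence axiom.

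The morphism-of-triangles and octahedral axioms are proved by lifting the given configuration from $\cC_{c,d}$ back to $\cD_c^{(2)}$. The enabling fact is a local finiteness property: for fixed objects $X,Y$ of $\cD_c^{(2)}$ the graded group $\bigoplus_n\Hom_{\cD_c^{(2)}}(X,F_d^nY)$ is finitely supported, so after applying a common power of $F_d$ any finite diagram of composable morphisms in $\cC_{c,d}$ is realized by an honest diagram in $\cD_c^{(2)}$, with the commutativity relations — which a priori hold only after summing over the $F_d$-orbit — arranged to hold on the nose. One then applies the corresponding axiom for $\cD_c^{(2)}$ — which holds because $\cD_c^{(2)}$ is triangle-equivalent to $\cD_c$ via $\eta$, and octahedra in $\cD_c=\cB/\cB_c$ come from the explicit sequences \eqref{nonsplit sequence in B} — and pushes the conclusion forward along $q$. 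The octahedral axiom is the main obstacle, being exactly the axiom that can fail for a general orbit category; what makes it work here is the local finiteness just described, which is what permits an octahedron in $\cC_{c,d}$ to be lifted to, verified in, and pushed down from $\cD_c^{(2)}$. (Alternatively one could invoke Keller's theorem on triangulated orbit categories: $\cD_c$ is a continuous analogue of the bounded derived category of a hereditary category and $F_d$ a triangulated autoequivalence with the requisite local finiteness, so Keller's differential graded enhancement argument adapts.)

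It remains to prove the last assertion. Let $A\to B\to C\to TA$ be a distinguished triangle of $\cC_{c,d}$ with $A,B,C$ indecomposable; by definition it is isomorphic as a triangle to $q$ of a distinguished triangle $A_0\to B_0\to C_0\to TA_0$ of $\cD_c^{(2)}$. Since $q$ is faithful it reflects direct-sum decompositions — if $B_0\cong B_0'\oplus B_0''$ with both summands nonzero then $qB_0\cong qB_0'\oplus qB_0''$ with both summands nonzero — so the indecomposability of $B\cong qB_0$ forces $B_0$ indecomposable, and likewise $A_0$ and $C_0$ (note $TA_0$ is indecomposable if and only if $A_0$ is). Finally, because $d>0$ the action of $F_d$ on objects is free and $q$ is surjective on objects, so the triangle isomorphism can be absorbed (up to replacing $A_0\to B_0\to C_0\to TA_0$ by an $F_d$-translate and relabeling by isomorphic objects), exhibiting $A\to B\to C\to TA$ itself as the $q$-image of a distinguished triangle of $\cD_c^{(2)}$ with indecomposable terms.
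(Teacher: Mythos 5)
This theorem is quoted in the paper's review section without proof; the triangulated structure on $\cC_{c,d}$ is established in the cited papers [IT09], [IT10], where it is obtained from a Frobenius-category model (the continuous cluster category is realized as the stable category of a continuous Frobenius category, Happel-style), with the doubled category $\cD_c^{(2)}$ introduced precisely so that $F_d$ can be made strictly triangulated. So your orbit-category verification is a genuinely different route from the source, and it has to be judged on its own. Its first steps are fine: the shift descends, rotation and isomorphism-invariance are inherited, and the finiteness of $\bigoplus_n\Hom_{\cD_c^{(2)}}(X,F_d^nY)$ for indecomposables is correct.

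The gap is the central lifting claim: that ``after applying a common power of $F_d$ any finite diagram of composable morphisms in $\cC_{c,d}$ is realized by an honest diagram in $\cD_c^{(2)}$, with the commutativity relations arranged to hold on the nose.'' This is not justified and is false as a general principle. A morphism in the orbit category is a sum of components in several degrees --- you yourself note that between indecomposables two consecutive degrees can occur --- and such a mixed-degree morphism is not $q(f)$ for any single morphism $f$ of $\cD_c^{(2)}$; applying a power of $F_d$ only relabels the degrees and cannot concentrate it in one. Likewise, a square that commutes in $\cC_{c,d}$ may do so only because of cancellation between components in different degrees, hence need not lift to a commuting square upstairs. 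This is exactly the known obstruction to orbit categories of triangulated categories being triangulated, and it is why Keller's theorem proceeds through a DG enhancement with specific hypotheses (algebraicity, derived category of a hereditary abelian category, finiteness of the $\Hom$-orbits); your parenthetical ``Keller's argument adapts'' does not verify any of these for $\cD_c$ (which is produced as a quotient $\cB/\cB_c$ via [BM94], not as a derived category) and so does not close the gap. Consequently the existence of cones on mixed-degree morphisms, the completion-of-morphisms axiom, and above all the octahedral axiom --- which you correctly identify as the real content --- are not established. The final paragraph (upgrading ``isomorphic to an image'' to ``is an image'' for triangles with indecomposable terms) is essentially fine, but it rests on the triangulated structure whose existence is the part left unproved.
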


It is easy to see that, up to isomorphism, the triangulated category $\cC_{c,d}$ depends only on the ratio $c/d$ in the sense that $\cC_{c,d}\cong \cC_{ac,ad}$ for any positive real number $a$. Therefore, we can fix $d$ to be any convenient number. So, we choose $d=\pi=3.14159\cdots$ and we use the notation:
\[
	\cC_c:=\cC_{c,\pi}.
\]

\begin{thm}
If $\dfrac c\pi=\dfrac{n+1}{n+3}$ then there is a triangulated embedding of the cluster category of type $A_n$ into $\cC_c$.
\end{thm}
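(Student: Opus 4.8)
The plan is to realize both categories through explicit combinatorial models and then exhibit the functor by hand. For the target we have the model recalled above: the indecomposable objects of $\cC_c$ are the $M(x,y)$ with $|y-x|<c$, subject to $M(x,y)=M(y+\pi,x+\pi)'$, with $T(x,y)=(y+c,x+c)$, and with morphism spaces inherited from $\cD_c$ through the doubling and the orbit construction. For the source I would use the Caldero--Chapoton--Schiffler description of the cluster category of type $A_n$, which I denote $\cC_{A_n}$: its indecomposable objects are the diagonals of a regular $(n+3)$-gon, its morphism spaces and distinguished triangles are given by an explicit recipe on diagonals (with $\Hom$ and $\Ext^1$ detecting whether two diagonals cross), and its suspension $[1]$ rotates the polygon by one step, so that in particular $[1]^{n+3}=\mathrm{id}$.

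First I would fix the geometry: place the $n+3$ vertices equally spaced on the parameter circle $S^1=\RR/2\pi\ZZ$, send the diagonal joining two vertices to the object $M(x,y)$ having those vertices as its ends, and define the functor on morphisms through the dictionary between the elementary combinatorial moves relating diagonals and the basic morphisms of $\cC_c$. The numerical hypothesis is forced exactly here. Rotating the polygon by one step must correspond to applying $T$, and iterating $n+3$ times must return every object to itself because $[1]^{n+3}=\mathrm{id}$ in $\cC_{A_n}$. Now $T^{n+3}$ sends $(x,y)$ to $(x+(n+3)c,y+(n+3)c)$ or to $(y+(n+3)c,x+(n+3)c)$ according to the parity of $n+3$, whereas the defining relation $M(x,y)=M(y+\pi,x+\pi)'$ makes the composite of $n+1$ applications of $(x,y)\mapsto(y+\pi,x+\pi)$, together with the accompanying parity change, equal to the identity of $\cC_c$; these two operations agree precisely when $(n+3)c=\pi(n+1)$, i.e.\ when $c/\pi=(n+1)/(n+3)$. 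With this choice the assignment respects composition and the orbit relations --- a finite combinatorial verification --- and so descends to an additive functor $\Phi\colon\cC_{A_n}\to\cC_c$. I would emphasize that $\Phi$ intertwines the two suspensions only up to the canonical isomorphism $\eta$ of the doubled category (equivalently, it may interchange the ``positive'' and ``negative'' triangle conventions), which is why it is triangulated but \emph{not} strictly triangulated: this is the single exception flagged at the start of the section.

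It then remains to see that $\Phi$ is fully faithful and triangulated. For full faithfulness one compares morphism spaces: on the $A_n$ side these are known explicitly, while on the $\cC_c$ side one has
\[
\Hom_{\cC_c}\bigl(M(x,y),M(x',y')\bigr)=\bigoplus_{k\in\ZZ}\Hom_{\cD_c^{(2)}}\bigl((x,y),F_\pi^k(x',y')\bigr),
\]
each summand being governed by the interval--overlap rule that computes $\Hom$ in $\cD_c$. One matches the two descriptions term by term under the diagonal--arc dictionary: the $k=0$ summand records the morphisms visible inside a single fundamental domain, while the summands with $k\neq0$ must reproduce exactly the morphisms of $\cC_{A_n}$ that use the cyclic symmetry of the $(n+3)$-gon --- no fewer and no more. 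Since $\Phi$ is additive this is a check on pairs of indecomposables. For the triangulated structure it suffices to treat the distinguished triangles built from pairs of crossing diagonals, in particular the Auslander--Reiten triangles, matching them against the positive and negative triangles of $\cD_c$ recalled above while keeping track of the sign and parity twist introduced by $\eta$.

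The step I expect to be the real obstacle is the full faithfulness computation: controlling the infinite direct sum over $k$ --- pinning down exactly which $k$ contribute a nonzero term --- is precisely where the arithmetic $c/\pi=(n+1)/(n+3)$ is indispensable, and doing this at the same time as tracking the non-strict triangulated structure through $\eta$ and $F_\pi$ is the delicate part. A cleaner but essentially equivalent alternative would be to first build a strictly triangulated functor $\cD^b(KA_n)\to\cD_c$ from an $A_n$-shaped ``slice'' of indecomposables of $\cD_c$, extend it over $\ZZ A_n$ by the mesh relations, and then verify that it intertwines $\tau^{-1}[1]$ with a power of $F_\pi$ so as to descend; but the same $\Hom$-matching, with the same appearance of the hypothesis, would then have to be carried out on the orbit categories.
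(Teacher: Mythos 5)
First, a point of reference: the paper does not prove this theorem — it appears in the review section and is quoted from the earlier paper \cite{IT10} — so there is no in-paper argument to measure your proposal against; I can only assess it on its own terms. Your dictionary is the right one: vertices of the $(n+3)$-gon equally spaced on $S^1$, diagonals sent to the objects of $\cC_c$ with those ends, rotation by one step matching $T$ (indeed $T$ moves both ends of $M(x,y)$ by $\pi-c=2\pi/(n+3)$), and you correctly flag that the functor can only be triangulated up to the parity isomorphism $\eta$, which is exactly the exception the paper announces.

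The gap is twofold. First, the one explicit computation you offer — that the hypothesis is ``forced exactly'' by matching $T^{n+3}$ against the orbit identification — is both inaccurate and not where the hypothesis really works. The condition $T^{n+3}\cong\mathrm{id}$ in $\cC_{c}$ only requires $(n+3)c=k\pi$ with $k\equiv n+1\pmod 2$; for instance $c=(n-1)\pi/(n+3)$ passes the same periodicity test, yet no embedding exists for that value, so the ``precisely when'' claim fails and the check is merely a necessary consistency condition. What actually pins down $c=(n+1)\pi/(n+3)$ is the object/Hom-level requirement: a nonzero object of $\cC_c$ has its two ends at angular distance strictly greater than $\pi-c$, so the choice $\pi-c=2\pi/(n+3)$ is exactly what makes the chords between polygon vertices surviving in $\cC_c$ be the genuine diagonals (edges become zero) and what makes the supports of $\Hom$ and of $\Ext^1=\Hom(-,T-)$, restricted to these $\tfrac{n(n+3)}{2}$ objects, collapse to the Caldero--Chapoton--Schiffler crossing rules; you never even verify that your assignment lands on nonzero objects of $\cC_c$. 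Second, the substance of the theorem — full faithfulness, i.e.\ the term-by-term comparison of $\bigoplus_{k}\Hom_{\cD_c^{(2)}}\bigl(X,F_\pi^k Y\bigr)$ with the diagonal Hom-spaces, compatibility with composition, and the matching of distinguished triangles with the sign and parity bookkeeping — is only announced (``a finite combinatorial verification'', ``the real obstacle''), not carried out. Since that comparison is precisely where the value of $c$ enters in an essential way, what you have is a correct and workable plan, but not yet a proof.
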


When $c<\pi$ we define two indecomposable objects $X,Y$ of $\cC_c$ to be \emph{compatible} if $\Ext^1(X,Y)=0=\Ext^1(Y,X)$. Recall that $\Ext^1(Y,X):=Hom(Y,TX)$ in any triangulated category. When $c=\pi$ we have another type of cluster category.

\begin{thm}
For any positive $n$, there is a triangulated embedding of the spaced-out cluster category of type $A_n$ into $\cC_\pi$.
\end{thm}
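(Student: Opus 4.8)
Write $\cS_n$ for the spaced-out cluster category of type $A_n$, and recall that it carries a combinatorial model whose indecomposable objects are the chords between a finite, cyclically ordered set of marked points, with morphisms governed by a clockwise/linking rule and distinguished triangles generated by the mesh (Auslander--Reiten) triangles. The plan is to realize $\cS_n$ as a full subcategory of $\cC_\pi$, in parallel with the tight embedding of Theorem 1.1.3, and then to verify the three conditions defining a triangulated embedding: that the inclusion is additive and fully faithful, that it is compatible with the shift up to a fixed natural isomorphism, and that it carries distinguished triangles to distinguished triangles. The construction starts by choosing marked points $p_0,\dots,p_N$ on $S^1=\RR/2\pi\ZZ$, placed symmetrically under the half-turn $x\mapsto x+\pi$ and in general position so that no chord between two of them is a diameter. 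This is possible precisely because $N$ is finite and, with $c=\pi$, we have an entire circle's worth of freedom --- in contrast to Theorem 1.1.3, where $c<\pi$ forces the marked points into a rigid equally spaced polygon; ``spacing out'' is exactly this relaxation. Define $F\colon\cS_n\to\cC_\pi$ on indecomposables by sending the chord $\{i,j\}$ to $M(p_i,p_j)$ (oriented so that $|p_j-p_i|<\pi$, hence nonzero), and on morphisms by the basic morphisms of $\cB$ descended to $\cC_\pi=\cD_\pi^{(2)}/F_\pi$.

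The first step is to check that $F$ is additive and fully faithful. By Theorem 1.1.2, $\Hom_{\cC_\pi}(MA,MB)=\bigoplus_{k}\Hom_{\cD_\pi^{(2)}}(A,F_\pi^{\,k}B)$, and each summand is a quotient of the at-most-one-dimensional Hom groups of $\cB$ given by the explicit inequalities on coordinates. One then matches this dictionary against the clockwise/linking rule for Hom spaces in $\cS_n$: for the chosen configuration, a nonzero morphism of chords corresponds to exactly one term of the orbit sum, and composition is multiplication of scalars on both sides; this is a finite bookkeeping argument once the $p_i$ are fixed. (If instead one defines $\cS_n$ directly as a prescribed full subcategory of $\cC_\pi$, this step is vacuous and the content of the theorem lies entirely in the next step.)

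The heart of the proof is that $F$ is triangulated. By Theorem 1.1.2 the distinguished triangles of $\cC_\pi$ with indecomposable terms are exactly the images under the strict orbit map of the positive ``up-right-up'' and negative ``right-up-right'' triangles of $\cD_\pi^{(2)}$, so it suffices to check that $F$ sends the mesh triangles of $\cS_n$ --- which generate all distinguished triangles --- to such images after postcomposition with a fixed natural isomorphism $\f\colon FT\xrightarrow{\ \sim\ }TF$. On indecomposables $\f$ is the composite of the half-turn identification with the parity isomorphism $\eta$, reflecting that the shift in $\cC_\pi$ is ``half-turn followed by change of parity''; note $F$ is not strictly triangulated, and this is the exception flagged in the review section. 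I expect the main obstacle to be the sign-and-parity bookkeeping here: $F_\pi$ interchanges positive and negative triangles and reverses parity, and $\eta$ anticommutes with $T$, so one must track which mesh triangles of $\cS_n$ land on positive versus negative triangles and verify that composing the connecting map with $\f$ produces the correct sign into $TFX$ --- a careless computation leaves a stray sign. Verifying in addition that every object occurring in these triangles is nonzero is automatic from the no-diameter condition on the configuration. Once the mesh triangles are handled, the usual closure properties of the class of distinguished triangles in both categories propagate the conclusion to all triangles, finishing the proof.
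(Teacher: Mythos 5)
This statement is a review item: the paper states it in Section 1 without proof, cites it to \cite{IT09,IT10}, and explicitly declines even to define the spaced-out cluster category (``We will not use the `spaced-out cluster category' in this paper, so we forgo the definition''). So there is no proof in the paper to compare your argument against, and your proposal has to stand on its own. As it stands it does not: its starting point is a guessed combinatorial model of $\cS_n$ (chords between marked points with a clockwise/linking rule), not the actual definition, and you concede as much when you remark that the full-faithfulness step might be ``vacuous'' if $\cS_n$ is defined as a subcategory of $\cC_\pi$ --- but if the definition is something else, the entire dictionary between chords, Hom spaces, and mesh triangles is unestablished, and that dictionary is precisely where the content of the theorem lives. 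The two substantive verifications --- matching $\Hom_{\cS_n}$ against the orbit-sum description of $\Hom_{\cC_\pi}(M(p_i,p_j),M(p_k,p_l))$, and checking that the generating triangles of $\cS_n$ land on the positive/negative triangles of Theorem 1.1.2 with the correct signs --- are exactly the steps you defer as ``bookkeeping,'' so the proposal is a plan rather than a proof.

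Two further concrete problems. First, your configuration is overdetermined: if the marked points are invariant under the half-turn $x\mapsto x+\pi$, then each pair $\{p,p+\pi\}$ is a diameter, contradicting your ``no chord is a diameter'' requirement; you would have to either drop the symmetry or exclude antipodal chords from the model, and either choice changes the object count you need for type $A_n$. Second, the ``exception'' about strict triangulation that the paper flags is the embedding of the \emph{standard} cluster category of type $A_n$ into $\cC_c$ with $c/\pi=(n+1)/(n+3)$ (Theorem 1.1.3), not the embedding of Theorem 1.1.4, so you should not lean on that remark to excuse a non-strict comparison isomorphism here; note also that in $\cC_\pi$ one has $TM(x,y)=M(y+\pi,x+\pi)=M(x,y)'$, i.e.\ $TX=X'$ on all objects, so the sign-and-parity analysis should be organized around that identity rather than around a separate ``half-turn followed by parity'' functor. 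To make this a proof you would need to start from the actual definition of the spaced-out cluster category in \cite{IT10}, specify the marked-point configuration it dictates, and carry out the Hom and triangle computations explicitly.
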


We will not use the ``spaced-out cluster category'' in this paper, so we forgo the definition. We note that in $\cC_\pi$ we have 
\[
T(M(x,y))=M(y+\pi,x+\pi)=M(x,y)'.
\]
Thus $TX=X'$ for all objects $X$. Also $X''=X$ for all objects.

\subsection{Clusters}

Recall that two indecomposable objects $X,Y$ in $\cC_\pi$ are \emph{compatible} if either $\Hom(X,Y)=0$ or $\Hom(Y,X)=0$. The objects compatible with any object $X=M(x,y)$ are $M(a,b)$ and $M(a,b)'$ where either 
\begin{enumerate}
\item $a\le x$ and $b\ge y$ ($M(a,b)$ is ``northwest'' of $M(x,y)$) or
\item $a\ge x$ and $b\le y$ ($M(a,b)$ is ``southeast'' of $M(x,y)$).
\end{enumerate}

Next, we need a metric on the set of indecomposable objects of $\cC_\pi$. We use the 1-\emph{norm} or ``taxi-cab metric'' which is the length of the shortest path consisting entirely of vertical and horizontal line segments. So, the distance between two objects $X,Y$ is the minimum of all real numbers of the form $|x-a|+|y-b|$ if $X=M(x,y)$ and $Y=M(a,b)$ or $Y=M(a,b)'$. An \emph{open ball} with \emph{radius} $\e$ around a point $M(x,y)$ is the set of all objects isomorphic to $M(a,b)$ where $|x-a|+|y-b|<\e$. This defines the usual topology on the set of isomorphism classes of indecomposable objects of $\cC_\pi$ which we denote by $\cM$. This is the \emph{open Moebius band}
\[
	\cM\equiv \{(x,y)\in\RR^2\,|\, |y-x|<\pi\}/(x,y)\sim (y+\pi,x+\pi).
\]

A \emph{cluster} $\cT$ is defined to be a discrete maximal pairwise compatible set of nonisomorphic indecomposable objects of $\cC_\pi$. By \emph{discrete} we mean that for every $M(x,y)$ in $\cT$ has an open neighborhood that contains no other object of $\cT$. Discreteness implies that $\cT$ is at most countably infinite.

\begin{prop}
Given any object $X$ in any cluster $\cT$ in $\cC_\pi$ there are, up to isomorphism, exactly two distinguished triangles $X\to A\to B\to TX$ and $X\to C\to D\to TX$ with $A,B,C,D$ in $\cT$. If we delete $X$ from the cluster $\cT$ then, up to isomorphism, there is a unique object $X^\ast$ in $\cC_\pi$ not isomorphic to $X$ so that $\cT\backslash \{X\}\cup \{X^\ast\}$ is a cluster and this new object $X^\ast$ is given by the octagon axiom for triangulated categories:
\[\xymatrixrowsep{10pt}\xymatrixcolsep{10pt}
\xymatrix{
B\ar[dr] && A\ar[ll] &&&& B\ar[dd] && A\ar[dl]\\
	& X\ar[ur]\ar[dl] &&&\longrightarrow&&& X^\ast\ar[ul]\ar[dr]\\
	C\ar[rr] && D\ar[ul]&&&& C\ar[ur]&&D\ar[uu]
	}
\]
\end{prop}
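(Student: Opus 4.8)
The plan is to argue in the geometric description of $\cM$ as the space of chords of the disk $D^{2}$, equivalently of unordered pairs of distinct points of $S^{1}=\RR/2\pi\ZZ$ --- the two \emph{ends} of the object (for $X=M(x,y)$ they are $x$ and $y+\pi$ modulo $2\pi$) --- as in \cite{IT09,IT10}. Two facts from that picture are the starting point: two indecomposable objects are compatible precisely when the corresponding chords do not cross transversally in the interior of $D^{2}$ (this is the ``northwest/southeast'' condition of \S1.2, read off after choosing the representatives $(x,y)$, $(a,b)$ appropriately); and, by the classification of distinguished triangles in $\cD_{c}$ recalled above together with the orbit construction, a distinguished triangle $X\to A\to B\to TX$ with indecomposable terms is, up to isomorphism, the datum of a third point $r\in S^{1}$ distinct from the ends $p,q$ of $X$, with $A$ and $B$ the chords $\{p,r\}$ and $\{q,r\}$ --- the ``positive'' triangles of \S1.1 being those with $r$ on the short arc cut off by $\{p,q\}$ and the ``negative'' ones those with $r$ on the long arc (which of $\{p,r\}$, $\{q,r\}$ is $A$, and the parity in the doubled orbit category $\cD_{c}^{(2)}/F_{d}$, are fixed by routine bookkeeping, and $X'\cong X$ in $\cC_{\pi}$ means a prime never changes the isomorphism class). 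Under this dictionary the Proposition says: for a cluster $\cT\ni X$ there are exactly two points $r$ with $\{p,r\},\{q,r\}\in\cT$; deleting $X$ from $\cT$ and inserting the ``flip'' of $X$ gives again a cluster; and this is the only replacement that does.

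The first point I would deduce from a combinatorial lemma: in any cluster $\cT$, each of the two connected components of $D^{2}\setminus\bigcup_{Z\in\cT}Z$ having $X$ on its boundary is a non-degenerate triangle. Such a region $R$ is convex, being an intersection of half-disks cut out by chords of $\cT$. Its boundary carries no sub-arc of $S^{1}$ of positive length, since a chord with both ends on such a sub-arc would be compatible with every element of $\cT$, contradicting maximality; so $\partial R$ is a union of chords of $\cT$ meeting at vertices. If $R$ had more than three vertices --- counting, in this continuous setting, a point at which $\partial R$ accumulates as one such --- choose two of them not joined by an edge of $R$; the chord between them lies in $\overline{R}$ by convexity, hence crosses no member of $\cT$, hence is compatible with all of $\cT$, again contradicting maximality. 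Therefore $R=\triangle(p,q,r)$ with $r\notin\{p,q\}$ and $\{p,r\},\{q,r\}\in\cT$. One such region sits on each side of $X$, giving $r_{\sigma}$ (short arc) and $r_{\lambda}$ (long arc), and on each side $r$ is unique because a second such point $r'$ would force $\{p,r'\}$ to cross $\{q,r\}$. This gives exactly the two distinguished triangles, with third vertices $r_{\sigma},r_{\lambda}$.

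For the mutation, set $X^{\ast}:=\{r_{\sigma},r_{\lambda}\}$, the flip of $X$ in the quadrilateral $Q=\triangle(p,q,r_{\sigma})\cup\triangle(p,q,r_{\lambda})$, whose vertices in cyclic order are $r_{\lambda},p,r_{\sigma},q$ and whose diagonals are $X$ and $X^{\ast}$; these cross, so $X^{\ast}\not\cong X$ and $X^{\ast}$ is incompatible with $X$. That $X^{\ast}$ is the object produced by the octahedral (``octagon'') axiom applied to the two triangles $X\to A\to B\to TX$, $X\to C\to D\to TX$ is verified either directly in $(x,y)$-coordinates or, more cheaply, by noting that the axiom returns an object lying in distinguished triangles built from $A,B,C,D$ with third vertices $p$ and $q$, and the triangle/vertex correspondence shows $\{r_{\sigma},r_{\lambda}\}$ is the only such object. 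Then $\cT':=(\cT\setminus\{X\})\cup\{X^{\ast}\}$ is a cluster: it is pairwise compatible, for a $Z\in\cT\setminus\{X\}$ crossing $X^{\ast}$ would enter the interior of $Q$ and hence cross one of its boundary edges $A,B,C,D$, all in $\cT$, so $Z$ would have its two ends among $r_{\lambda},p,r_{\sigma},q$ and cross $X^{\ast}$, forcing $Z=X$, which is excluded; it is discrete, since $X^{\ast}$ crosses $X\in\cT$ and therefore every chord sufficiently near $X^{\ast}$ also crosses $X$ and is not in $\cT$, while $\cT\setminus\{X\}$ is discrete as a subset of $\cT$; and it is maximal, for a chord $Z$ compatible with all of $\cT'$ is either compatible with $X$ as well --- then $Z\in\cT\setminus\{X\}\subset\cT'$ by maximality of $\cT$, as $Z$ cannot be $X$ --- or crosses $X$, in which case not crossing $A,B,C,D$ forces the ends of $Z$ to be $r_{\sigma}$ and $r_{\lambda}$, i.e. $Z=X^{\ast}$. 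The same dichotomy gives uniqueness: if $(\cT\setminus\{X\})\cup\{Y\}$ is a cluster with $Y\not\cong X$, then $Y$ is not compatible with $X$ (otherwise $Y\in\cT\setminus\{X\}$ and $\cT\setminus\{X\}$ would be maximal, which it is not), so $Y$ crosses $X$, and then not crossing $A,B,C,D$ forces $Y=X^{\ast}$.

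The step I expect to be the main obstacle is the first one --- the triangularity lemma --- because in $\cC_{\pi}$ a cluster is an infinite, locally finite triangulation of the disk whose endpoints on $S^{1}$ are in fact dense, so a region could a priori have infinitely many sides or a limit vertex; excluding this is exactly where maximality and the convexity of the regions must be combined, and a direct transcription of the finite type $A_{n}$ argument would overlook it. (Alternatively one may use transitivity of $\Aut(\cC_{\pi})$ on clusters to reduce this point to the standard cluster $\cT_{0}$, at the price of needing its explicit triangulation.) The only other mildly delicate step is matching the octahedral-axiom output with the geometric flip, a finite computation once the parity and shift conventions of $\cD_{c}^{(2)}/F_{d}$ are spelled out.
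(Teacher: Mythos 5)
This Proposition is stated in the review section and is not proved in the paper at all; it is recalled from \cite{IT09}, \cite{IT10}, so there is no in-paper proof to compare with. Your argument via the chord model---compatibility as noncrossing of chords, a cluster as a discrete maximal noncrossing family, distinguished triangles with indecomposable terms indexed by a third point $r\in S^1$, and $X^\ast$ as the flip of the diagonal $X$ in the quadrilateral with vertices $r_\lambda,p,r_\sigma,q$---is exactly the geometric approach of those earlier papers, and it is essentially correct; in particular you rightly isolate the one genuinely delicate point, that the two complementary regions adjacent to $X$ are honest triangles even though the endpoints of a cluster are dense in $S^1$, and your convexity-plus-maximality argument does handle it (note it also uses, correctly, that a cluster is maximal among \emph{all} compatible sets, not merely among discrete ones, and that discreteness of $\cT$ at $X$ is what guarantees a complementary region on each side of $X$). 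Two small points deserve more care than your one-line justifications give them: (i) to exclude arcs of $S^1$ from $\partial R$ you assert that a chord with both ends in such an arc is compatible with every element of $\cT$, but a priori some $Z\in\cT$ could have an endpoint inside the arc; the clean fix is the same device you use for the vertex count---any chord joining two points of $\partial R$ lies in $\overline{R}$, which is contained in the closed half-disk cut out by each $Z\in\cT$, hence crosses nothing---together with a choice of the two points ensuring the chord is not itself an element of $\cT$; (ii) the identification of the flip with the octahedral-axiom output (including indecomposability of the relevant cone) is left as ``routine bookkeeping,'' which is acceptable for a statement the paper itself only cites, but it is the one step you would still need to write out in the $(x,y)$-coordinates and parity conventions of Section 1.
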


An object $X\in\cM$ will be called \emph{rational} with respect to a cluster $\cT$ if it can be obtained by a finite sequence of mutations from $\cT$. The set of rational points is countable. So, mutation does not act transitively on the set of clusters.

Let $Homeo_+(S^1)$ denote the group of orientation preserving homoeomorphisms of the circle $S^1=\RR/2\pi\ZZ$. Any $\f\in Homeo_+(S^1)$ lifts to a homeomorphism $\tilde\f$ of $\RR$ so that $\tilde\f(x+2\pi)=\tilde\f(x)+2\pi$ for all real $x$. Define an action of this group on the cluster category $\cC_\pi$ by
\[
	\f M(x,y)=M(\tilde \f(x),\tilde\f(y-\pi)+\pi)
\]
This is independent of the choice of $\tilde\f$ and defines a triangulated automorphism of $\cC_\pi$. Furthermore, for any triangulated automorphism $\psi$ of $\cC_\pi$, there is a $\f\in Homeo_+(S^1)$ so that $\psi(X)\cong\f(X)$ for all objects $X$.

\begin{thm}
For any two clusters $\cT_1,\cT_2$ in $\cC_\pi$ and any two objects $T_1\in\cT_1$, $T_2\in\cT_2$ there is a $\f\in Homeo_+(S^1)$ so that $\f(\cT_1)\cong\cT_2$ and $\f(T_1)=T_2$.
\end{thm}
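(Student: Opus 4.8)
The plan is to reduce the statement to an action on "ends" on the circle $S^1$, since every object $M(x,y)$ of $\cC_\pi$ is determined (up to the parity-switching relation $(x,y)\sim(y+\pi,x+\pi)$) by its unordered pair of ends $\{x \bmod 2\pi,\ (y+\pi)\bmod 2\pi\}$ on $S^1=\RR/2\pi\ZZ$. First I would record the combinatorial structure of a cluster in terms of ends: a discrete maximal pairwise-compatible set $\cT$ corresponds to a discrete collection of chords of $S^1$ that is a maximal ``noncrossing'' family, i.e. a triangulation of the disk (possibly with accumulation on the boundary). The object $T_1\in\cT_1$ then corresponds to one distinguished chord $\gamma_1$ of the triangulation $\Delta_1$ associated to $\cT_1$, and similarly $T_2\mapsto\gamma_2$ inside $\Delta_2$. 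Since the group action $\f\mapsto(M(x,y)\mapsto M(\tilde\f(x),\tilde\f(y-\pi)+\pi))$ is precisely the action of $Homeo_+(S^1)$ on chords (applying $\f$ to both endpoints on the circle), it suffices to produce $\f\in Homeo_+(S^1)$ carrying the triangulation $\Delta_1$ to $\Delta_2$ and the chord $\gamma_1$ to $\gamma_2$.

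The key steps, in order, are: (1) Translate ``cluster'' into ``triangulation of the disk by a discrete set of chords,'' and check that $Homeo_+(S^1)$ acts on such triangulations by acting on endpoints; this is where compatibility $=$ noncrossing and maximality $=$ triangulation get used, and discreteness guarantees the combinatorial dual is a locally finite tree. (2) Normalize the first chord: find $\f_1\in Homeo_+(S^1)$ with $\f_1(\gamma_1)=\gamma_2$, which is trivial since $Homeo_+(S^1)$ is transitive on ordered pairs of distinct points (choose the correct one of the two orientations so that $\f_1(\Delta_1)$ and $\Delta_2$ lie on the same side of $\gamma_2$ — here is the only place the Moebius/parity subtlety enters, and it is handled by allowing the reflection hidden in the $\sim$-relation). (3) Build $\f$ by induction over the dual tree: $\gamma_2$ bounds two ideal triangles in $\Delta_2$ (or one, at the ``root''); match them with the two triangles of $\f_1(\Delta_1)$ adjacent to $\gamma_2$ by a homeomorphism of each boundary arc fixing the already-matched vertices; then recurse on the newly-exposed chords. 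Because both triangulations are discrete (locally finite duals), this inductive matching is consistent and the resulting map extends continuously to all of $S^1$ — one invokes a standard ``triangulation-matching'' argument (as in the combinatorics of the Farey/Stern–Brocot tessellation and its homeomorphism group) to conclude $\f\in Homeo_+(S^1)$.

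The main obstacle I expect is step (3): handling the accumulation points on $S^1$. A discrete cluster is at most countable, but its ends can accumulate, so the dual tree is infinite and the inductive construction of $\f$ only defines it on a dense subset of $S^1$ (the rational ends, together with all vertices of $\Delta_1$ and $\Delta_2$); one must argue that the map so defined is uniformly continuous on this dense set and hence extends to a homeomorphism of $S^1$. The right way to do this is to observe that a chord far out in the dual tree subtends a small arc on at least one side (this is exactly what discreteness/maximality forces — there is no ``room'' for an open arc disjoint from all chords other than at a genuine limit point), so nested sequences of chords shrink to points on both $\Delta_1$ and $\Delta_2$ compatibly, giving continuity of $\f$ and of $\f^{-1}$. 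Once the homeomorphism is in hand, the induced triangulated automorphism of $\cC_\pi$ sends $\cT_1$ to a cluster isomorphic to $\cT_2$ and $T_1$ to $T_2$, which is the claim; transitivity of mutation is not needed, and indeed fails, so this is genuinely a statement about the homeomorphism action rather than about mutation.
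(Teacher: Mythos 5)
The paper itself does not prove this theorem: it appears in the review section and is quoted from \cite{IT09}, \cite{IT10}, so there is no internal proof to compare against. Your strategy --- translate clusters into maximal discrete families of pairwise noncrossing chords, match the two triangulations along their dual trivalent trees starting from the marked chord, and extend the resulting order-preserving bijection of endpoints to a homeomorphism of $S^1$ --- is the natural one and, as far as I can tell, the one used in the cited source. Two points, however, deserve to be flagged. First, the real content is hidden in your step (1), not step (3): ``maximality $=$ triangulation'' is not a formal translation. What one must prove is that a cluster has no accumulation points in the interior of the Moebius band: a nondegenerate limit of chords of $\cT$ cannot strictly cross any chord of $\cT$ (strict crossing is an open condition), hence is compatible with all of $\cT$, hence would lie in $\cT$ by maximality, contradicting discreteness. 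It is this one lemma, playing maximality against discreteness, that simultaneously gives (i) density of the endpoint set (otherwise a chord inside a free arc could be added), (ii) that every complementary region is an ideal triangle with a genuine triangle on the other side of each chord, (iii) that the dual graph is a connected, locally finite trivalent tree, and (iv) the shrinking of nested chords that you invoke for continuity of $\f$. Note that the statement is sensitive to how ``discrete maximal compatible'' is read: a family consisting of two sequences of nested chords limiting onto a diameter from both sides, with all pockets triangulated, is discrete and maximal among \emph{discrete} compatible sets but is not equivalent to the standard cluster under any homeomorphism; so maximality must be taken among all pairwise compatible sets, and your argument should say explicitly where it uses this.

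Second, a small error in step (2): the parity relation $(x,y)\sim(y+\pi,x+\pi)$ does not ``hide a reflection'' and cannot be used to fix the orientation issue --- the induced action of $Homeo_+(S^1)$ on chords is by orientation-preserving maps only. The correct remedy is simply that the half-triangulations on the two sides of any chord of a cluster are of the same combinatorial type (rooted trivalent trees), so after sending the endpoints of $\gamma_1$ to those of $\gamma_2$ in the orientation-compatible way, the side-by-side matching of the dual trees can always be carried out; no reflection is ever needed. With these two repairs --- the accumulation lemma stated and used up front, and the orientation point argued via the symmetry of the two sides of a chord rather than via the $\sim$-relation --- your outline becomes a complete proof, and the final extension step is then the standard fact that a cyclic-order-preserving bijection between dense subsets of $S^1$ extends uniquely to an element of $Homeo_+(S^1)$.
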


Therefore, up to isomorphism, the continuous cluster category $\cC_\pi$ has only one cluster. When we need specific coordinates for objects we will take the \emph{standard cluster} $\cT_0$ which is defined to be the set of objects with coordinates
\[
	\left(\frac{m\pi}{2^n},\pi+\frac{(m-1)\pi}{2^n}\right)
\]
for integers $n\ge0$ and $0\le m<2^{n+1}$. We need to delete $(\pi,\pi)=(0,0)'$ to avoid repetition. The other points are all nonisomorphic. The set of points which are rational with respect to $\cT_0$ are those points with coordinates $(a\pi/2^n,b\pi/2^n)$ for all integer $a,b,n$ with $n\ge0$ and $|a-b|<2^n$.

\begin{defn}
Let $\cX$ be the additive subcategory of $\cC_\pi$ generated by all objects which are rational with respect to the standard cluster $\cT_0$. Then the quotient category $\cX/\cT_0$ will be called the \emph{rational cluster-tilted category}. For any cluster $\cT$ the quotient category $\cC_\pi/\cT$ will be called a \emph{continuous cluster-tilted category}.
\end{defn}


\section{Cluster-tilted categories are abelian}

In this section we will show that $\cX/\cT_0$ and $\cC_\pi/\cT_0$ are abelian categories. We will always assume that $\cT=\cT_0$ is the standard cluster and that $\cC=\cC_\pi$. So, we drop these subscripts.


\subsection{Ends}

\begin{defn}
We define the \emph{ends} of an indecomposable object $M(x,y)$ of $\cC$ to be the elements $x,y+\pi\in S^1$. The set of ends of any object of $\cC$ is defined to be the union of the set of ends of its components. An end is called \emph{rational} if it has the form $a\pi/2^n$ where $a,n$ are integers with $n\ge0$. An end is \emph{irrational} if it is not of this form.
\end{defn}

Note that $M(x,y)'=M(y+\pi,x+\pi)$ with ends $y+\pi,x+2\pi=x$ in $S^1$. Therefore, isomorphic objects have the same ends. We recall the following.

\begin{prop}
An object of $\cC$ lies in $\cX$ if and only if its ends are all rational.
\end{prop}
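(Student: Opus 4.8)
The plan is to prove both implications by relating the two characterizations of $\cX$ already available in the excerpt: the \emph{generative} one (objects obtained from $\cT_0$ by finitely many mutations, together with their direct sums) used to define $\cX$, and the \emph{coordinate} one recalled at the end of Section~1 (rational points are exactly those with coordinates $(a\pi/2^n,b\pi/2^n)$ for integers $a,b,n$ with $n\ge0$ and $|a-b|<2^n$). The statement to prove is then the equivalence: an indecomposable $M(x,y)$ has both ends $x$ and $y+\pi$ of the form $a\pi/2^n$ if and only if the pair $(x,y)$ has coordinates of the stated rational form. Since both $\cX$ and ``has rational ends'' are closed under finite direct sums, it suffices to handle indecomposables, and then the claim follows by combining with the already-stated description of the rational points of $\cT_0$.

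First I would prove that a rational point (one obtainable from $\cT_0$ by finitely many mutations) has rational ends. By the coordinate description quoted above, such a point is $M(a\pi/2^n,b\pi/2^n)$; its ends in $S^1=\RR/2\pi\ZZ$ are $a\pi/2^n$ and $b\pi/2^n+\pi=(b+2^n)\pi/2^n$, both manifestly of the form (integer)$\cdot\pi/2^n$, hence rational. Since this description accounts for \emph{all} rational points, and $\cX$ is by definition the additive subcategory they generate, every object of $\cX$ has all ends rational; this is the easy direction.

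Conversely, suppose $M(x,y)$ is indecomposable with both ends rational, say $x=a\pi/2^m$ and $y+\pi=c\pi/2^k$ in $S^1$, i.e.\ $y=c\pi/2^k-\pi=(c-2^k)\pi/2^k$ in $\RR$ (choosing the representative with $|y-x|<\pi$). Putting these over a common denominator $2^n$ with $n=\max(m,k)$ gives $(x,y)=(a'\pi/2^n,\,b'\pi/2^n)$ for integers $a',b'$, and the constraint $|y-x|<\pi$ becomes $|a'-b'|<2^n$. This is exactly the coordinate form of a rational point, so $M(x,y)$ is rational with respect to $\cT_0$ and therefore lies in $\cX$. The only subtlety is bookkeeping modulo $2\pi$ in $S^1$ versus the chosen lift in $\RR$: one must check that the two ends $x,y+\pi\in S^1$, lifted compatibly so that $|y-x|<\pi$ holds in $\RR^2$ for the unique standard representative, still have dyadic-rational coordinates — but adding or subtracting integer multiples of $2\pi=2^{n+1}\pi/2^n$ preserves the form $(\text{integer})\cdot\pi/2^n$, so this is automatic.

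The main obstacle, to the extent there is one, is purely one of matching conventions: ends live in $S^1$ while the coordinate criterion for rationality is stated in $\RR^2$ (up to the Moebius identification $(x,y)\sim(y+\pi,x+\pi)$), so the argument has to pin down the correct $\RR^2$-representative of $M(x,y)$ and verify that the dyadic-rationality of the two $S^1$-ends transfers to that representative and back. Once that is set up, the proposition is essentially immediate from the explicit description of the rational points of $\cT_0$ already recorded before the Definition of $\cX$; no new input about mutations or triangulated structure is needed. Indeed this is why the excerpt labels the statement a recalled \emph{Proposition} rather than proving it from scratch — the real content was already packaged into the coordinate description of the rational points.
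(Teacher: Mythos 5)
Your argument is correct and matches the paper's (implicit) reasoning: the paper states this proposition with ``We recall the following'' and offers no proof, treating it as an immediate consequence of the coordinate description of the points rational with respect to $\cT_0$ given in Section~1, which is exactly the derivation you supply. Your handling of the $S^1$-versus-$\RR$ lift (dyadic multiples of $\pi$ are preserved under shifts by $2\pi$ and by $\pi$, and $|y-x|<\pi$ gives $|a'-b'|<2^n$) and the reduction to indecomposables are the only points needing care, and you address both correctly.
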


We need the following trivial observation about the additive full subcategory of objects in $\cC$ with one fixed end. For any $z\in S^1$, let $\cP_z$ be the full subcategory of $\cC$ generated by all $M(z,y)$ where $z-\pi<y<z+\pi$. Then $\cP_z$ is a linear category (isomorphic to an additive full subcategory of the category $\cP$). We assume that $z$ is irrational so that $\cP_z$ has no objects in the standard cluster $\cT$ and $\cP_z$ is isomorphic to its image in the quotient category $\cC/\cT$.

\begin{prop}\label{two cases for maps in P_z}
Let $f:X\to Y_0\oplus Y_1$ be any morphism between objects $X,Y_0,Y_1\in\cP_z$ where $Y_0$ is indecomposable. Let $f_0:X\to Y_0$ be the first component of $f$. Then there there is an automorphism $g$ of $X$ so that $f\circ g:X\to Y_0$ is nonzero on at most one component of $X$.
\end{prop}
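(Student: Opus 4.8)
\emph{Proof idea.} The plan is to reduce the statement to elementary linear algebra inside the linear category $\cP_z$; the object $Y_1$ plays no role, so it suffices to treat the composite $X\xrightarrow{g}X\xrightarrow{f}Y_0\oplus Y_1\to Y_0$, i.e.\ $f_0\circ g$. Write $X=\bigoplus_{i=1}^{n}X_i$ with each $X_i=M(z,y_i)$ indecomposable (allowing repetitions, with the summands indexed distinctly) and write $Y_0=M(z,w)$. Since $\cP_z$ is a linear category, $\Hom(X_i,Y_0)$ equals $K$ when $y_i\le w$ and is $0$ otherwise; fixing the basic morphisms $1_{y,y'}\colon M(z,y)\to M(z,y')$ for $y\le y'$ (which satisfy $1_{y',y''}\circ 1_{y,y'}=1_{y,y''}$), the first component $f_0$ is recorded by scalars $\lambda_1,\dots,\lambda_n\in K$ with $f_0\iota_i=\lambda_i\,1_{y_i,w}$ and $\lambda_i=0$ whenever $y_i>w$, where $\iota_i\colon X_i\to X$ and $p_i\colon X\to X_i$ are the structure maps of the direct sum.

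If all $\lambda_i=0$, take $g=\mathrm{id}_X$. Otherwise choose an index $i^\ast$ with $\lambda_{i^\ast}\ne 0$ and $y_{i^\ast}$ maximal among $\{\,i:\lambda_i\ne 0\,\}$ (any one of them if the maximum is attained several times). For each $j\ne i^\ast$ with $\lambda_j\ne 0$ we then have $y_j\le y_{i^\ast}$, so $1_{y_j,y_{i^\ast}}\colon X_j\to X_{i^\ast}$ exists, and we set
\[
  g \;=\; \mathrm{id}_X \;-\;\sum_{j\ne i^\ast,\ \lambda_j\ne 0}\frac{\lambda_j}{\lambda_{i^\ast}}\;\iota_{i^\ast}\,1_{y_j,y_{i^\ast}}\,p_j \;\in\;\End_{\cC}(X).
\]
I would then verify two things. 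First, $g$ is an automorphism: writing $g=\mathrm{id}_X+N$, the product of any two summands of $N$ vanishes because $p_j\iota_{i^\ast}=0$ for distinct indices $j\ne i^\ast$, so $N^2=0$ and $g^{-1}=\mathrm{id}_X-N$. Second, $f_0\circ g$ is nonzero on at most the summand $X_{i^\ast}$: precomposing with $\iota_j$ for $j\ne i^\ast$ with $\lambda_j\ne 0$ gives $\lambda_j\,1_{y_j,w}-\tfrac{\lambda_j}{\lambda_{i^\ast}}\lambda_{i^\ast}\,1_{y_{i^\ast},w}\circ 1_{y_j,y_{i^\ast}}=0$; precomposing with $\iota_j$ for $j$ with $\lambda_j=0$ gives $f_0\iota_j=0$ (as $g\iota_j=\iota_j$ then); and precomposing with $\iota_{i^\ast}$ gives $\lambda_{i^\ast}\,1_{y_{i^\ast},w}\ne0$.

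The only point needing a little care is confirming that $g$ is an honest morphism of $\cC$ (equivalently of $\cC/\cT$), i.e.\ that each term $\iota_{i^\ast}\,1_{y_j,y_{i^\ast}}\,p_j$ lies in $\End_{\cP_z}(X)$; this is exactly where maximality of $y_{i^\ast}$ among the active indices is used, since it forces $y_j\le y_{i^\ast}$ and hence the existence of $1_{y_j,y_{i^\ast}}$. I expect no genuine obstacle: the statement is the familiar fact that in an upper-triangular endomorphism ring a row vector can be reduced to a single nonzero entry by column operations adding multiples of the last nonzero column to the earlier columns. Ordering the direct-sum decomposition of $X$ so that the $y_i$ are weakly increasing makes the ``upper-triangular'' picture literal, though this is not strictly needed for the argument above.
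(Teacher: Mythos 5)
Your proposal is correct and is essentially the paper's own argument: the paper likewise takes the terminal summand (your $X_{i^\ast}$, with $y_{i^\ast}$ maximal among summands where $f_0\neq0$), lifts the other components of $f_0$ through it, and subtracts these liftings from $\mathrm{id}_X$ to get the automorphism $g$. Your version just makes the scalars, the $N^2=0$ verification, and the use of maximality (to ensure $g\in\End_{\cP_z}(X)$) explicit.
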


\begin{proof}
If $f_0\neq0$, we take $X_0$ to be the terminal object in the collection of all summands of $X$ on which $f_0$ is nonzero. For every other summand $X_1$ of $X$, the restriction of $f_0$ to $X_1$ lifts to $X_0$ and we subtract this lifting from the identity map on $X$ to obtain the required automorphism $g$.
\end{proof}

\begin{defn} The \emph{mesh} of a finite subset of $S^1$ is defined to be the smallest positive difference between two elements. For any objects $X$ in $\cC$, the \emph{mesh} $\mesh(X)$ of $X$ is defined to be the mesh of the set of ends of $X$. We define $\mesh(0)=\pi$.
\end{defn}

\begin{lem} If $X,Y$ are indecomposable objects of $\cC/\cT$ then $\Hom_{\cC/\cT}(X,Y)\neq0$ if and only if 
\begin{enumerate}
\item $X\cong M(a,b), Y\cong M(x,y)$ for some $y-\pi<a\le x, x-\pi<b\le y$ and 
\item The closed rectangle $R=[a,x]\times [b,y]$ contains no points in the standard cluster $\cT$.
\end{enumerate}
Furthermore, the first condition is equivalent to the condition that $\Hom_\cC(X,Y)\neq0$.
\end{lem}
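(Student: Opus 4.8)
The plan is to obtain the ``furthermore'' clause from the explicit description of $\cC=\cC_\pi$ recalled in Section~1, and then to decide which of those morphisms survive in the quotient. Unwinding $\cC_\pi=\cD_\pi^{(2)}/F_\pi$ and $\cD_\pi\cong\cB/\cB_\pi$, a nonzero morphism $M(a,b)\to M(x,y)$ in $\cC$ is represented by a basic morphism of $\cB$ between suitable planar representatives, and a basic morphism $(a,b)\to(x,y)$ survives in $\cB/\cB_\pi$ precisely when $a\le x$, $b\le y$ and no ``wide'' object $(u,v)$ with $|v-u|\ge\pi$ lies between them, i.e.\ when in addition $x-b<\pi$ and $y-a<\pi$; this is exactly condition~(1) rewritten as $y-\pi<a\le x$ and $x-\pi<b\le y$. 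A short check with the $F_\pi$-translates of the representatives shows that at most one summand of the orbit category can contribute, so $\Hom_\cC(M(a,b),M(x,y))$ is at most one-dimensional and, when nonzero, is spanned by the image $f$ of this basic morphism. I expect to cite \cite{IT09,IT10} here, keeping only that this Hom-space is $\le 1$-dimensional, is governed by condition~(1), is generated by a basic morphism, and that a composite of nonzero basic morphisms is again a nonzero basic morphism.

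Next, fix planar representatives $(a,b)$ of $X$ and $(x,y)$ of $Y$ for which condition~(1) holds --- possible exactly when $\Hom_\cC(X,Y)\ne0$, and when $\Hom_\cC(X,Y)=0$ both sides of the asserted equivalence fail, so there is nothing to prove. Then $\Hom_\cC(X,Y)=Kf$, and since $\Hom_{\cC/\cT}(X,Y)=\Hom_\cC(X,Y)/\cI(X,Y)$ with $\cI$ the ideal of morphisms factoring through $\operatorname{add}\cT$, we have $\Hom_{\cC/\cT}(X,Y)\ne0$ iff $f\notin\cI(X,Y)$. One-dimensionality of $\Hom_\cC(X,Y)$ turns any factorization $f=\sum_i g_ih_i$ through $\bigoplus_i T_i$, $T_i\in\cT$, into $g_ih_i=\lambda_i f$ with some $\lambda_i\ne0$, so $f$ factors through $\operatorname{add}\cT$ iff it factors through a single indecomposable $T\in\cT$. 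Hence the lemma reduces to the claim: \emph{$f$ factors through some $T\in\cT$ if and only if the closed rectangle $R=[a,x]\times[b,y]$ contains a planar representative of an object of $\cT$.}

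For ``if'', let $(u,v)\in R$ represent $T\in\cT$, so $a\le u\le x$ and $b\le v\le y$; the inequalities of~(1) for $(X,Y)$ then yield $u-b<\pi$, $v-a<\pi$, $x-v<\pi$ and $y-u<\pi$ for free, so by the Hom-description above the basic morphisms $(a,b)\to(u,v)\to(x,y)$ define nonzero maps $X\to T\to Y$ in $\cC$ with composite $f$. For ``only if'', suppose $f=gh$ with $h\colon X\to T$ and $g\colon T\to Y$ both nonzero, $T\in\cT$, and fix a lift $\hat T$ of $T$ in $\cD_\pi^{(2)}$. Since $\Hom_\cC(X,T)$ is $\le 1$-dimensional, $h$ is concentrated in one orbit summand and is the image of a basic morphism onto $(u,v):=F_\pi^n\hat T$, whence $v-\pi<a\le u$ and $u-\pi<b\le v$; similarly $g$ is the image of a basic morphism out of $F_\pi^m\hat T$. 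Tracking composition in the orbit category, $gh$ lies in degree $n+m$ of $\Hom_\cC(X,Y)$; as $gh=f\ne0$ sits in degree $0$ and this space is $\le 1$-dimensional, $m=-n$, and applying the autoequivalence $F_\pi^n$ to $g$ gives $\Hom_{\cD_\pi^{(2)}}((u,v),(x,y))\ne0$, hence $y-\pi<u\le x$ and $x-\pi<v\le y$. Combining, $a\le u\le x$ and $b\le v\le y$, so $(u,v)\in R$ represents an object of $\cT$; this proves the claim, and the lemma.

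The one delicate point is the bookkeeping of representatives in the last step: each indecomposable of $\cC_\pi$ is a point of the Moebius band carrying a whole $\ZZ$-family of planar representatives (related by $(x,y)\sim(y+\pi,x+\pi)$ and by a parity shift), and the factorization $f=gh$ by itself only supplies representatives of $T$ adapted to $h$ and to $g$ separately; it is precisely the one-dimensionality of $\Hom_\cC(X,Y)$ together with the orbit-category composition law that pins these onto a single sheet, so that a genuine representative $(u,v)$ of $T$ lands in $R$. The rest is a routine chase of the inequalities of condition~(1).
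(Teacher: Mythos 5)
Your proof is correct and follows essentially the same route as the paper: identify condition (1) with nonvanishing of $\Hom$ in $\cB/\cB_\pi$ (hence in $\cC$), and then show that the nonzero morphism $X\to Y$ factors through an indecomposable $Z$ exactly when $Z$ lies in the rectangle $R$, so it dies in $\cC/\cT$ iff $R$ meets $\cT$. The only difference is that you spell out the one-dimensionality of $\Hom_\cC(X,Y)$, the reduction from $\operatorname{add}\cT$ to a single indecomposable, and the orbit-category bookkeeping of representatives, all of which the paper uses implicitly.
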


\begin{proof}
The first condition is equivalent to the condition that there is a morphism in $\cB$ from $X$ to $Y$ which does not factor through $\cB_\pi$. Equivalently, $\Hom(X,Y)\neq0$ in the continuous derived category and therefore in $\cC$. The closed rectangle $R$ is the set of all indecomposable objects $Z$ so that $\Hom_\cC(X,Z)$ and $\Hom_\cC(Z,Y)$ are both nonzero. Thus a nonzero morphism $X\to Y$ factors through $Z$ if and only if $Z\in R$. So, $X\to Y$ does not factor through any object of $\cT$ if and only if $\cT$ is disjoint from $R$. 
\end{proof}


\subsection{Infinitesimal $\t^{-1}$ and support}

For any object $S=M(x,y)\in\cT$ and any $0<\e<\frac12\mesh S$ the objects $S_\e:=M(x+\e,y+\e)$ and $S_{-\e}:=M(x-\e,y-\e)$ are nonzero objects of $\cC/\cT$. We consider the limit of $S_\e$ and of $S_{-\e}$ as $\e$ goes to zero. This will give us infinitesimal versions of the Auslander-Reiten translations $\t^{-1}S$ and $\t S$ of $S$. These play the respective roles of the projective and injective modules of the usual cluster-tilted algebras. 

\begin{defn}[infinitesimal Auslander-Reiten translation]
If $S\in\cT$ and $X$ is an object of $\cC$, we define
\[
	\Hom_{\cC/\cT}(\t^{-1}S,X):=\lim_{\e\to0+} \Hom_{\cC/\cT}(S_\e,X)
\]
and similarly,
\[
	\Hom_{\cC/\cT}(X,\t S):=\lim_{\e\to0+} \Hom_{\cC/\cT}(X,S_{-\e})
\]
The groups $\Hom_{\cC}(\t^{-1}S,X),\Hom_{\cC}(X,\t S)$ are defined analogously with $\cC/\cT$ replaced by $\cC$. When $\cC$ is replaced with $\cX$ we take $\e$ to be real numbers of the form $\pi/2^n$ with $n$ going to infinity.
\end{defn}

Strictly speaking, this only defines a covariant functor $\Hom_{\cC/\cT}(\t^{-1}S,-)$ on the category $\cC/\cT$ and similarly for $\t S$. We can also regard $\t^{-1}S,\t S$ as formal inverse and direct limits of sequences of objects in the category $\cC/\cT$.

\begin{lem}
There are natural duality isomorphisms
\[
	\Hom_\cC(\t^{-1}S,X)\cong D\Hom_\cC(X,S)
\]
\[
	\Hom_\cC(X,\t S)\cong D\Hom_\cC(S,X)
\]
for all $S\in\cT$ and $X\in\cC$ where $D=\Hom_K(-,K)$ is vector space duality. 
\end{lem}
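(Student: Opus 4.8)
The plan is to compute both sides explicitly using the previous lemma, which describes exactly when $\Hom_\cC(X,Y)\neq0$, and then to verify that the duality pairing is natural. First I would reduce to the case where $X = M(a,b)$ is indecomposable, since every object is a finite direct sum of indecomposables and all the functors in sight are additive; the general case follows by taking direct sums on both sides. So fix $S = M(x,y) \in \cT$ and $X = M(a,b)$. By definition, $\Hom_\cC(\t^{-1}S,X) = \lim_{\e\to0+}\Hom_\cC(S_\e, X)$ where $S_\e = M(x+\e,y+\e)$.

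Next I would identify each side as a one-dimensional (or zero-dimensional) $K$-vector space and match the conditions. By the first part of the preceding lemma (the condition equivalent to $\Hom_\cC(X,Y)\neq0$, i.e.\ the ``northwest/southeast'' condition), $\Hom_\cC(S_\e, X) \neq 0$ iff $M(x+\e,y+\e)$ lies northwest or southeast of $M(a,b)$ in the appropriate sense — concretely iff $y+\e-\pi < a \le x+\e$ and $x+\e - \pi < b \le y+\e$ (using that $\cC = \cC_\pi$ so $TX = X'$). Taking $\e \to 0+$, the stable condition is $y - \pi < a \le x$ and $x - \pi < b \le y$, with the half-open inequalities at $x,y$ becoming the relevant strict/non-strict boundary in the limit; this matches precisely the condition for $\Hom_\cC(X, S) = \Hom_\cC(M(a,b), M(x,y)) \neq 0$. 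When the condition holds, each of $\Hom_\cC(S_\e,X)$ and $\Hom_\cC(X,S)$ is one-dimensional (spanned by the basic morphism), so the inverse limit $\Hom_\cC(\t^{-1}S,X)$ is one-dimensional, and $D\Hom_\cC(X,S)$ is one-dimensional; when it fails, both are zero. The second isomorphism $\Hom_\cC(X,\t S)\cong D\Hom_\cC(S,X)$ is proved the same way using $S_{-\e} = M(x-\e,y-\e)$, or simply by applying the first to the opposite category (using that $\cC$ is isomorphic to $\cC^{op}$, as noted in the introduction).

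The real content, and the step I expect to be the main obstacle, is producing a pairing that is \emph{natural} in $X$ — matching dimensions is routine, but a dimension count does not by itself give a canonical isomorphism or functoriality. My plan here is to write down the pairing on basic morphisms: given $\xi \in \Hom_\cC(\t^{-1}S,X)$ represented by a compatible family $(\xi_\e \colon S_\e \to X)$ and $\zeta \in \Hom_\cC(X, S)$, I would form the composite $\zeta \circ \xi_\e \colon S_\e \to S$, which for small $\e$ is the basic morphism $M(x+\e,y+\e) \to M(x,y)$ — wait, that composite goes the wrong way, so instead I would use the structure map $S_\e \to S$ (the basic morphism, which exists for $\e>0$) and pair via $\xi \mapsto (\zeta \mapsto$ [coefficient of $\xi_\e$ composed appropriately with $\zeta$ relative to the basic morphism $S_\e \to S]$). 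The cleanest formulation: for the family $(\xi_\e)$ and any $\zeta: X \to S$, the composites $\xi_\e \colon S_\e \to X \xrightarrow{\zeta} S$ stabilize (for $\e$ below the mesh of the relevant ends) to a well-defined scalar multiple $\langle \xi,\zeta\rangle$ of the basic map $S_\e \to S$, and this scalar is independent of $\e$; this defines a $K$-bilinear pairing $\Hom_\cC(\t^{-1}S,X)\times\Hom_\cC(X,S)\to K$. Naturality in $X$ is then immediate from associativity of composition: given $h\colon X\to X'$, both $\langle h\circ\xi, \zeta'\rangle$ and $\langle \xi, \zeta'\circ h\rangle$ equal the stable scalar of $\zeta'\circ h\circ\xi_\e$. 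Finally I would check the pairing is perfect by the dimension count above (both spaces being at most one-dimensional, and the pairing being nonzero exactly when both are nonzero, which is the matched support condition). One technical point to handle carefully is the behavior at the boundary of the mesh — i.e.\ ensuring that $\xi_\e$ and the composite genuinely stabilize rather than jumping — which follows because no object of $\cT$ interferes (here we use that $S\in\cT$ is isolated in $\cT$, so for $\e$ small the relevant rectangles meet $\cT$ only possibly at $S$ itself, and $S_\e \to S$ does not factor through $\cT\setminus\{S\}$).
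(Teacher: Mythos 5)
Your proposal is correct and follows essentially the same route as the paper: the duality pairing is composition $\Hom_\cC(\t^{-1}S,X)\otimes\Hom_\cC(X,S)\to\Hom_\cC(\t^{-1}S,S)\cong K$, with naturality from associativity and perfection from the fact that both functors have the same half-open rectangle as support. The only quibble is that your intermediate inequality for $\Hom_\cC(S_\e,X)\neq0$ mixes up source and target (it reads as the condition for $\Hom_\cC(X,S_\e)\neq0$ in the other coordinate chart), but the limiting condition you extract, $y-\pi<a\le x$ and $x-\pi<b\le y$, is the correct one, matching the paper.
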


\begin{proof}
The first duality is given by composition:
\[
	\Hom_\cC(\t^{-1}S,X)\otimes \Hom_\cC(X,S)\to \Hom_\cC(\t^{-1}S,S)=\lim_{\e\to0+}\Hom_\cC(S_\e,S)=K
\]
since every term in the limit is equal to $K$. The support of $\Hom_\cC(-,S)$ is the half open rectangle $(b-\pi,a]\times(a-\pi,b]$ if $S=M(a,b)$. But this is also the support of $\Hom_\cC(\t^{-1}S,-)$. So, we have a perfect duality. The other case is similar.
\end{proof}

\begin{prop}[infinitesimal Auslander-Reiten duality]
There is a natural duality:
\[
	\Hom_{\cC/\cT}(\t^{-1}S,X)\cong D\Hom_{\cC/\cT}(X,\t S)
\]
\end{prop}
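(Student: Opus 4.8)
The plan is to deduce the infinitesimal Auslander–Reiten duality in the quotient category $\cC/\cT$ from the duality already established in $\cC$ by the preceding lemma, together with the fact that the morphisms in $\cC$ that die in $\cC/\cT$ are exactly those factoring through the cluster $\cT$. First I would unwind the definitions: $\Hom_{\cC/\cT}(\t^{-1}S,X)$ is the inverse limit over $\e\to0+$ of $\Hom_{\cC/\cT}(S_\e,X)$, and $\Hom_{\cC/\cT}(X,\t S)$ is the inverse limit of $\Hom_{\cC/\cT}(X,S_{-\e})$. Since these limits are over a directed system indexed by $\e$ and the vector spaces involved are finite dimensional (each $S_\e$ and $S_{-\e}$ is indecomposable, and $\Hom$ between indecomposables in $\cC/\cT$ is $K$ or $0$ by the $\Hom$-vanishing lemma), $D$ converts the inverse limit into a direct limit. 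So it suffices to produce, for each small $\e$, a natural isomorphism $\Hom_{\cC/\cT}(S_\e,X)\cong D\Hom_{\cC/\cT}(X,S_{-\e})$ compatible with the transition maps as $\e$ shrinks, and then pass to the (co)limit.

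Next I would set up the $\e$-level duality. By the lemma, $\Hom_\cC(S_\e,X)\cong D\Hom_\cC(X,S_\e)$ — wait, more precisely, the lemma gives $\Hom_\cC(\t^{-1}S,X)\cong D\Hom_\cC(X,S)$, but the underlying $\e$-level statement I actually want is the ordinary AR-type duality $\Hom_\cC(S_\e,X)\cong D\Hom_\cC(X,\t(S_\e))$ in the triangulated category $\cC$, where $\t(S_\e)=T^{-1}S_\e$ or the appropriate shift; since $TX=X'$ and the Moebius identification is explicit, $\t(S_\e)$ will be identified with $S_{-\e}$ up to the shift built into the definition of $S_{-\e}$. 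The cleaner route is to avoid invoking $\t$ at the $\e$-level and instead directly use the support computation from the proof of the lemma: the support of $\Hom_\cC(S_\e,-)$ and the support of $\Hom_\cC(-,S_{-\e})$ are the same half-open rectangle (translated by $\e$), so there is a perfect pairing $\Hom_\cC(S_\e,X)\otimes\Hom_\cC(X,S_{-\e})\to K$ given by composition landing in $\Hom_\cC(S_\e,S_{-\e})\cong K$ (for $\e$ small enough that this latter space is one-dimensional and the rectangle defining it contains no cluster point other than $S$ itself). This gives the $\e$-level duality in $\cC$.

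Then I would pass to the quotient. The key point is that a morphism $S_\e\to X$ becomes zero in $\cC/\cT$ iff it factors through $\cT$, and likewise for $X\to S_{-\e}$; and by the factorization description in the $\Hom$-vanishing lemma, a composite $S_\e\to X\to S_{-\e}$ factors through $\cT$ as soon as either factor does. Moreover the composition pairing $\Hom_\cC(S_\e,X)\otimes\Hom_\cC(X,S_{-\e})\to\Hom_\cC(S_\e,S_{-\e})$ has the property that $\Hom_\cC(S_\e,S_{-\e})$ is itself $1$-dimensional and the subspace of it consisting of maps factoring through $\cT$ is $0$ (since $S$ is the unique cluster point in the relevant rectangle and $S_\e\to S\to S_{-\e}$ composes to zero in $\cC$, or else there are no cluster points there at all — this needs a short check using discreteness of $\cT$ and $\e<\tfrac12\mesh S$). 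Hence the pairing descends to a pairing on the quotient spaces $\Hom_{\cC/\cT}(S_\e,X)\otimes\Hom_{\cC/\cT}(X,S_{-\e})\to K$, and one checks it is still perfect: the radical on each side is exactly the image of $\Hom$-through-$\cT$, which is precisely what we have quotiented out. Naturality in $X$ is immediate from functoriality of composition, and compatibility with the maps $S_{\e'}\to S_\e$, $S_{-\e}\to S_{-\e'}$ for $\e'<\e$ is a direct check. Taking the limit over $\e$ then yields the stated natural duality.

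The main obstacle I anticipate is the bookkeeping at the boundary: verifying that for sufficiently small $\e$ the rectangle relevant to $\Hom_\cC(S_\e,S_{-\e})$ meets $\cT$ in at most the single point $S$, and that consequently the $1$-dimensional space $\Hom_\cC(S_\e,S_{-\e})$ either survives to $\cC/\cT$ or, if $S$ does lie in that rectangle, that the composite $S_\e\to S\to S_{-\e}$ accounts for all of it — in other words, pinning down exactly when $\Hom_{\cC/\cT}(S_\e,S_{-\e})$ is $K$ versus $0$ and reconciling that with the definition of $\t^{-1}S,\t S$ as formal limits. A secondary technical point is justifying the interchange of $D$ with the inverse limit; this is harmless because the system is eventually constant (all $\Hom$ spaces between indecomposables in $\cC/\cT$ are at most $1$-dimensional, so the transition maps stabilize for small $\e$), but it should be stated. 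Once these finiteness and boundary facts are nailed down, the duality is a formal consequence of the $\cC$-level duality of the previous lemma.
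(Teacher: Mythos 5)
Your reduction to a fixed-$\e$ perfect pairing does not work, because the level-wise statement you assert is false: the supports of $\Hom_\cC(S_\e,-)$ and of $\Hom_\cC(-,S_{-\e})$ are \emph{not} the same rectangle. If $S=M(a,b)$, the first is (up to the Moebius identification) the half-open rectangle $(b-\pi,a]\times(a-\pi,b]$ translated by $+\e$ in both coordinates, while the second is that rectangle translated by $-\e$; they differ by a band of width $2\e$. Concretely, take $X=S_{-\e/2}=M(a-\e/2,b-\e/2)$: the basic morphism $S_\e\to X$ is nonzero, and for small $\e$ it does not factor through $\cT$ (the relevant closed rectangle misses the cluster), so $\Hom_{\cC/\cT}(S_\e,X)=K$, whereas $\Hom_\cC(X,S_{-\e})=0$ since the coordinates of $X$ exceed those of $S_{-\e}$. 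So the pairing $\Hom_{\cC/\cT}(S_\e,X)\otimes\Hom_{\cC/\cT}(X,S_{-\e})\to K$ is degenerate at every fixed $\e$, both in $\cC$ and in $\cC/\cT$; your step ``one checks it is still perfect'' cannot be carried out level-wise, and perfection is not a formal consequence of the $\cC$-level duality of the preceding lemma (note that even in the limit, inside $\cC$, the supports of $\Hom_\cC(\t^{-1}S,-)$ and $\Hom_\cC(-,\t S)$ still disagree along two boundary edges of the rectangle: one is $(b-\pi,a]\times(a-\pi,b]$, the other $[b-\pi,a)\times[a-\pi,b)$).

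The duality is genuinely a statement about the limit functors in the quotient, and the missing ingredient is the computation the paper makes: both $\Hom_{\cC/\cT}(\t^{-1}S,-)$ and $\Hom_{\cC/\cT}(-,\t S)$ have support exactly the \emph{open} rectangle $(b-\pi,a)\times(a-\pi,b)$, because any morphism from $\t^{-1}S$ to an object on the boundary edges (and dually any morphism from a boundary object to $\t S$) factors through cluster objects, which accumulate on the boundary arbitrarily close to the corners $(b-\pi,b)$ and $(a,a-\pi)$. Once the two supports are shown to agree and each Hom space is at most one-dimensional, the composition pairing into $\Hom_{\cC/\cT}(\t^{-1}S,\t S)=\lim\Hom_{\cC/\cT}(S_\e,S_{-\delta})=K$ is perfect. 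Your setup of the pairing and the remark that the limit systems stabilize are fine; it is the identification of the two supports in $\cC/\cT$ (the corner-accumulation argument), not a level-wise duality, that has to carry the proof.
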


\begin{proof}
This duality is given by the composition:
\[
	\Hom_{\cC/\cT}(\t^{-1}S,X)\otimes \Hom_{\cC/\cT}(X,\t S)\to\Hom_{\cC/\cT}(\t^{-1}S,\t S)= \lim_{\e,\delta\to0+}\Hom_{\cC/\cT}(S_\e,S_{-\delta})=K
\]
since every term in the limit is equal to $K$. If $S=M(a,b)$ then the support of $\Hom_{\cC/\cT}(\t^{-1}S,-)$ is equal to the open rectangle $(b-\pi,a)\times (a-\pi,b)$ which is also equal to the support of $\Hom_{\cC/\cT}(-,\t S)$. So, the duality is perfect. The boundary points which lie in the support of $\Hom_{\cC}(\t^{-1}S,-)$ are not in the support of $\Hom_{\cC/\cT}(\t^{-1}S,-)$ since $\cT$ contains points on the boundary arbitrarily close to the upper left corner $(b-\pi,b)$ and lower right corner $(a,a-\pi)$ and any morphism from $\t^{-1}S$ to any point in the boundary of the rectangle will factor through one of those points.
\end{proof}

We also need the infinitesimal analogue of the radical of the projective. Each infinitesimal radical has two components. 

\def\rt{r\t^{-1}}

\begin{defn} The \emph{infinitesimal radical} of $\t^{-1}S$ which we denote by $\rt S$ is formally defined as follows.
\[
	\Hom_{\cC/\cT}(\rt S,X):=\lim_{\e\to0+} \Hom_{\cC/\cT}(M(x+\e,y)\oplus M(x,y+\e),X)
\]
\end{defn}

\begin{prop}
Suppose that $S\in\cT$ and $X\in\cM$ and $S,X$ are not isomorphic. Then the following are equivalent.
\begin{enumerate}
\item $\Hom_\cC(S,X)\neq 0$ and $\Hom_\cC(X,S)\neq 0$.
\item There is a neighborhood $\cU$ of $S$ in $\cM$ so that $\Hom_\cC(T,X)\neq0$ for all $T\in \cU$.
\item There is a neighborhood $\cV$ of $S$ in $\cM$ so that $\Hom_\cC(X,T)\neq0$ for all $T\in \cV$.
\item $\Hom_{\cC/\cT}(\t^{-1}S,X)\neq 0$
\item $\Hom_{\cC/\cT}(X,\t S)\neq 0$.
\end{enumerate}
\end{prop}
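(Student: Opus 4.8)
The plan is to prove the cycle of implications (1) $\Rightarrow$ (2) $\Rightarrow$ (4) $\Rightarrow$ (1) and, dually, (1) $\Rightarrow$ (3) $\Rightarrow$ (5) $\Rightarrow$ (1); symmetry under the duality $\cC/\cT\cong(\cC/\cT)^{op}$ (which swaps $\t^{-1}S$ with $\t S$ and reverses all morphisms) reduces the work to essentially one column of implications. Throughout I will write $S=M(a,b)$, so that, by the Lemma on $\Hom_\cC$, the set $\{T\in\cM\mid \Hom_\cC(T,S)\neq0\}$ is the half-open rectangle $R^-:=(b-\pi,a]\times(a-\pi,b]$ and $\{T\in\cM\mid\Hom_\cC(S,T)\neq0\}$ is the half-open rectangle $R^+:=[a,a+\pi)\times[b,b+\pi)$ — here I am using the northwest/southeast description of compatibility together with the interpretation of morphisms in $\cC$ as basic morphisms in $\cB$ not factoring through $\cB_\pi$.

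First I would establish (1) $\Leftrightarrow$ (2) $\Leftrightarrow$ (3) purely in $\cC$, before passing to the quotient. Condition (1) says $X\in R^-\cap R^+$, i.e. $a\le x\le a+\pi$-ish and $b-\pi<y\le b$ type constraints — more precisely $X\cong M(x,y)$ with $b-\pi<y\le b$ and $a\le x$ (from $\Hom_\cC(X,S)\neq0$) together with $a\le x$ (wait: $\Hom_\cC(S,X)\neq 0$ forces $x\ge a$ and $y\ge b$). So jointly $x\ge a$, $y\ge b$, $y\le b$ (from $\Hom_\cC(X,S)$, using the Lemma's inequalities) — the two rectangles meet exactly along the segment $x\ge a$, $y=b$, plus the degenerate possibility... let me instead phrase it as: (1) holds iff $X$ lies in the intersection $R^-\cap R^+$. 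Since $S$ is an interior point of each of the two rectangles $R^\pm$ as a subset of $\cM$ when we thicken slightly — no: the key geometric fact is that $S$ itself lies in the closure of $R^-\cap R^+$, and a whole neighborhood $\cU$ of $S$ satisfies $\Hom_\cC(T,X)\neq0$ for all $T\in\cU$ precisely when $X$ is in the interior of $R^-$ together with $X$ being "hit from $S$'s neighborhood", which is again equivalent to $X\in R^-\cap R^+$ with $X\not\cong S$. I would make this precise by noting that $\Hom_\cC(T,X)\neq0$ for all $T$ in a neighborhood of $S=M(a,b)$ iff $X\cong M(x,y)$ with $x>a-\pi$... rather: iff $x\ge a$ can be relaxed on both sides, forcing $x>a$-nothing — the honest statement is: $\exists\,\cU\ni S$ with $\Hom_\cC(T,X)\neq0\ \forall T\in\cU$ iff $X=M(x,y)$ with $x\ge a$, $y\ge b$, and $y-\pi<a$, $x-\pi<b$ where the last two are automatic from $X\in R^-$; combined with $X\ne S$ this is exactly (1). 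The same argument with the roles of source and target swapped gives (1) $\Leftrightarrow$ (3).

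Next, (2) $\Rightarrow$ (4): by definition $\Hom_{\cC/\cT}(\t^{-1}S,X)=\lim_{\e\to0+}\Hom_{\cC/\cT}(S_\e,X)$, and $S_\e=M(a+\e,b+\e)$ lies in the neighborhood $\cU$ for $\e$ small, so each $\Hom_\cC(S_\e,X)\neq0$; I must check this survives the quotient by $\cT$ and the inverse limit. It survives the quotient because the rectangle $[a+\e,x]\times[b+\e,y]$, which by the $\Hom_{\cC/\cT}$ Lemma controls vanishing, shrinks as $\e\to0+$ to a rectangle whose only possible cluster points lie on its boundary near the corners — and here I invoke exactly the argument used in the proof of infinitesimal AR duality: $\cT$ has points arbitrarily close to the corners $(b-\pi,b)$ and $(a,a-\pi)$ of $R^-$ but, since $X\ne S$ and $X\in R^-\cap R^+$, for small $\e$ the rectangle $[a+\e,x]\times[b+\e,y]$ is bounded away from those corners, hence eventually disjoint from $\cT$. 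The inverse limit is then a limit of a system of copies of $K$ with identity transition maps, so it is $K\ne0$. For (4) $\Rightarrow$ (1): if $\Hom_{\cC/\cT}(\t^{-1}S,X)\ne0$ then a fortiori $\Hom_\cC(S_\e,X)\ne0$ for all small $\e$, forcing $x\ge a$ and $y\ge b$ (so $\Hom_\cC(S,X)\ne0$), and by infinitesimal AR duality $\Hom_{\cC/\cT}(X,\t S)\ne 0$, which similarly forces $\Hom_\cC(X,S_{-\delta})\ne0$ for small $\delta$, hence $x\le a$... no — hence $X$ lies northwest-ish of $S_{-\delta}$ giving $\Hom_\cC(X,S)\ne0$ in the limit; and $X\ne S$ is part of the hypothesis. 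That closes the cycle.

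The main obstacle I anticipate is the bookkeeping in the quotient category at the degenerate/boundary cases: making airtight the claim that a morphism $S_\e\to X$ in $\cC$ does \emph{not} factor through $\cT$ for small $\e$, i.e. that the controlling rectangle eventually avoids the (countable, but accumulating-at-corners) cluster $\cT$. This is where one has to be careful that $X$ is a genuine object of $\cM$ distinct from $S$ — if $X$ were allowed to equal $S$, morphisms would factor through $\cT$ and everything would collapse — and where the precise half-open versus open nature of the rectangles (as in the infinitesimal AR duality proof) matters. Everything else is a routine translation between the combinatorics of rectangles in $\RR^2/{\sim}$ and Hom-vanishing, using the two Lemmas and the infinitesimal AR duality Proposition already established.
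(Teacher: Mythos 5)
Your overall plan --- reduce everything to rectangle combinatorics on the Moebius band, get (4)$\Leftrightarrow$(5) from infinitesimal AR-duality, and treat (2),(3) as perturbations of (1) --- is the same as the paper's, but the execution goes wrong exactly at the boundary cases, which are the actual content of this proposition. First, the intersection $R^-\cap R^+$ is never computed correctly: taken literally, without applying the identification $(x,y)\sim(y+\pi,x+\pi)$, that intersection is the single point $S$ (not a segment), and your ``honest statement'' for (2) with the non-strict inequalities $x\ge a$, $y\ge b$ is false --- a point $X=M(x,y)$ with $x=a$ and $b<y<a+\pi$ satisfies neither (1) nor (2), and excluding $X\cong S$ removes only the corner, not the edges. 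The fact you need (and which the paper states and uses) is that $\Hom_\cC(S,-)\neq0$ and $\Hom_\cC(-,S)\neq0$ cut out the two half-open rectangles $[b-\pi,a)\times[a-\pi,b)$ and $(b-\pi,a]\times(a-\pi,b]$, whose intersection, after the identification is applied, is the open rectangle $(b-\pi,a)\times(a-\pi,b)$; the paper then simply quotes from the proof of infinitesimal AR-duality that this open rectangle is also the support of $\Hom_{\cC/\cT}(\t^{-1}S,-)$ and of $\Hom_{\cC/\cT}(-,\t S)$, so (1), (4), (5) describe the same set, and (2), (3) follow from openness plus the first duality lemma.

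Two further steps are genuinely gapped. In (2)$\Rightarrow$(4), your reason that $[a+\e,x]\times[b+\e,y]$ is eventually disjoint from $\cT$ --- that it is bounded away from the corners $(b-\pi,b)$ and $(a,a-\pi)$ --- is a non sequitur: $\cT$ is not concentrated near those corners, and the actual danger is cluster points on the two edges through $S$ itself (e.g.\ $(0,\pi/2)\in\cT_0$ when $S=(0,0)$). The correct argument is that any $T'\in\cT$ lying strictly northeast of $S$ inside $[a,x]\times[b,y]$ would satisfy $\Hom_\cC(S,T')\neq0\neq\Hom_\cC(T',S)$, contradicting compatibility of $\cT$; hence $\cT$ meets $[a,x]\times[b,y]$ only along the edges through $S$, which $\e>0$ avoids. (The corner-accumulation argument you cite from the AR-duality proof serves the opposite purpose: excluding boundary points from the support.) In (4)$\Rightarrow$(1), the inference ``$\Hom_\cC(S_\e,X)\neq0$ for all small $\e$, so $\Hom_\cC(S,X)\neq0$'' is false: for $X=M(a,v)$ with $a-\pi<v<b$ one has $\Hom_\cC(S_\e,X)\neq0$ for every small $\e>0$ but $\Hom_\cC(S,X)=0$, and the dual passage ``$\Hom_\cC(X,S_{-\delta})\neq0$ for small $\delta$, hence $\Hom_\cC(X,S)\neq0$ in the limit'' fails the same way; nonvanishing is not preserved in such limits. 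This can be repaired --- the conjunction of the two perturbed conditions does force a single representative of $X$ into the open rectangle once $X\not\cong S$ (one must check both representatives), or one can quote the support computation as the paper does --- but as written these steps do not hold, and they are precisely where the open-versus-half-open distinction that the proposition is about has to be confronted.
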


\begin{proof}
Suppose that $S=M(a,b)$ is fixed. Then we will show that the set of all $X$ satisfying each of the above conditions is the same. By infinitesimal AR-duality, conditions (4), (5) are equivalent and they are both equivalent to the condition that $X$ lies in the open rectangle $(b-\pi,a)\times (a-\pi,b)$. But $\Hom_\cC(S,X)\neq 0$ if and only if $X$ lies in the half closed rectangle $[b-\pi,a)\times [a-\pi,b)$ and $\Hom_\cC(X,S)\neq 0$ if and only if $X$ lies in the half closed rectangle $(b-\pi,a]\times (a-\pi,b]$. The open rectangle is the intersection of these half closed rectangles. Therefore (4) and (5) are equivalent to condition (1).

Since $(b-\pi,a)\times (a-\pi,b)$ is an open set which varies continuously with $S$, any $X$ in this open rectangle will also satisfy the perturbations (2), (3) of the two conditions in (1). Conversely, each of conditions (2) and (3) implies condition (1). For example, (2) implies in particular that $\Hom_\cC(S,X)\neq0$ and it also implies that $\Hom_\cC(\t^{-1}S,X)\neq0$ which is equivalent to $\Hom_\cC(X,S)\neq0$ by the first duality lemma. Therefore, all conditions are equivalent.
\end{proof}

\begin{defn} The \emph{support} of an indecomposable object $X$ is defined to be the set of all objects $S\in\cT$ not isomorphic to $X$ satisfying any of the equivalent conditions given above. The \emph{support} of any object $X$ in $\cC$ is defined to be the union of the supports of its components. 
\end{defn}

Suppose that $X=M(x,y)$ lies in the rational cluster-tilted category $\cX$. Then we claim that the support of $X$ is finite. It follows from the definition that the support of $X$ is the set of all objects of $\cT$ which lie in the open rectangle $R=(y-\pi,x)\times (x-\pi,y)$. Suppose that $S=M(a\pi/2^n,b\pi/2^n)\in R\cap\cT$ where $a,b$ are integers with $b\ge a$. Then $b=a+2^n-1$ and $S\in R$ implies that $y-\pi<a\pi/2^n$ and $b\pi/2^n<y$. Adding $\pi-\pi/2^n$ to the first equation we get:
\[
	y-\frac{\pi}{2^n}<\frac{a\pi}{2^n}+\pi-\frac{\pi}{2^n}<y
\]
which implies that $y$ is not an integer multiple of $\pi/2^n$. Conversely, if $y$ is not an integer multiple of $\pi/2^n$ then we can find an integer $a$ satisfying the above inequality and this gives a point $S$ in $R\cap \cT$. If $y$ is not an integer multiple of $\pi/2^n$ for any $n$ then we get a sequence of objects in $R\cap \cT$ converging to the point $(y-\pi,y)$.

Going back to the original problem, suppose that there are an infinite number of points in $R\cap\cT$. Then they must converge to the boundary of the Moebius band. So, they converge either to the upper left corner $(y-\pi,y)$ or the lower right corner $(x,x-\pi)$. In the first case, $y$ cannot be equal to any integer times $\pi/2^n$ for any $n$. In the second case, $x$ cannot be equal to any integer times $\pi/2^n$ for any $n$. When $X=(x,y)$ lies in $\cX$, neither of these negative statement is true. Therefore, the support of $X$ is finite.

\begin{prop}
An object of $\cC$ lies in $add\,\cT$ if and only if its support is empty. The support of an object is finite if and only if the object lies in $\cX$.
\end{prop}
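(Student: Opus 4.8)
The plan is to read off both equivalences from the description of the support of an indecomposable $X=M(x,y)$ as the set of objects of $\cT$ lying in the open rectangle $R=(y-\pi,x)\times(x-\pi,y)$, together with the maximality that is part of the definition of a cluster. The equivalence between having finite support and lying in $\cX$ is essentially already in hand: when $X\in\cX$ the discussion just above the statement shows the support is finite, since an infinite subset of $\cT\cap R$ would have to accumulate at one of the corners $(y-\pi,y)$ or $(x,x-\pi)$ of $R$, forcing $y$ or $x$ to be irrational. Conversely, if $X\notin\cX$ then one of the ends $x$, $y+\pi$ of $X$ is irrational, equivalently $x$ or $y$ is irrational, and running that same computation at the upper left corner when $y$ is irrational (and its mirror image at the lower right corner when $x$ is irrational) produces an infinite sequence of objects of $\cT$ inside $R$, so the support is infinite. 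I would record this part first and then use it below only to identify the shape of objects with empty support.

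For the first statement it suffices, after decomposing $X$, to treat an indecomposable object, and the ``only if'' direction is then immediate: if $X\in\cT$, then for every $S\in\cT$ with $S\not\cong X$ the pairwise compatibility of the cluster gives $\Hom_\cC(S,X)=0$ or $\Hom_\cC(X,S)=0$, so condition (1) (and hence all five equivalent conditions) of the Proposition above that defines the support fails for $S$, and $X$ has empty support. For the ``if'' direction, suppose the indecomposable $X$ has empty support. By that same Proposition this says precisely that $X$ is compatible with every object of $\cT$, so $\cT\cup\{X\}$ is a set of pairwise compatible, pairwise nonisomorphic indecomposables. Moreover $X$ is not a limit point of $\cT$: since $X$ is a point of the open Moebius band it lies at a positive distance from the bounding circle, whereas every object of $\cT$ at level $n$ lies within taxi-cab distance $\pi/2^n$ of that circle; hence a small enough ball about $X$ meets objects of $\cT$ from only finitely many levels, and since the objects on each level are $\pi/2^n$-spaced the ball meets $\cT$ in a finite set. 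Consequently $\cT\cup\{X\}$ is again discrete, so if $X\notin\cT$ it would be a discrete pairwise compatible set of nonisomorphic indecomposables properly containing $\cT$, contradicting maximality. Therefore $X\in\cT$, and in general $X\in add\,\cT$.

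Every ingredient but one is routine: the bookkeeping with the rectangle $R$, and the appeals to the Proposition above listing the equivalent conditions for membership in the support and to the criterion for nonvanishing Hom-spaces established earlier. The one point carrying real content --- and the one I expect to be the crux --- is the assertion that $X$ is not a limit point of $\cT$. It encodes the geometric fact that a cluster accumulates only on the boundary circle of the open Moebius band while every object of $\cC$ sits strictly in the interior, and it is exactly what lets the maximality of $\cT$ be applied; getting the metric estimate relating the ``level'' of a cluster object to its distance from the boundary right is the main thing to verify.
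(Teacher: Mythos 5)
Your proof is correct and takes essentially the same route as the paper: empty support means $X$ is compatible with every object of $\cT$, which by maximality of the cluster forces $X\in add\,\cT$, while the finiteness statement is exactly the rectangle computation carried out just before the proposition. The only difference is that you explicitly verify $X$ is not a limit point of $\cT_0$ (so that $\cT_0\cup\{X\}$ is still discrete and maximality applies), a point the paper's proof leaves implicit.
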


\begin{proof}
If $\Hom_\cC(S,T)$ and $\Hom_\cC(T,S)$ are both nonzero and $S,T$ are nonisomorphic, then $S,T$ are not compatible. Therefore, $S,T$ cannot both lie in $\cT$. So, the support of any object of $\cT$ is empty. Conversely, if either $\Hom_\cC(S,X)=0$ or $\Hom_\cC(X,S)=0$ for all $T\in\cT$ then $X$ is compatible with every object of $\cT$. So, $X$ lies in $add\,\cT$. This proves the first statement. The second statement was proved above.
\end{proof}

\begin{cor}
The support of any object $X$ of $\cC/\cT$ is well defined. The support of $X$ is empty if and only if $X=0$. The support of $X$ is finite if and only if $X\in\cX/\cT$.
\end{cor}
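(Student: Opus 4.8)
The statement to prove is the corollary: for any object $X$ of $\cC/\cT$, its support is well defined, it is empty iff $X=0$, and it is finite iff $X\in\cX/\cT$. The plan is to reduce everything to the preceding proposition, which already establishes the analogous statements for objects of $\cC$ (rather than $\cC/\cT$), together with the equivalences among conditions (1)--(5) of the proposition just before it. The only genuinely new point is the phrase ``well defined,'' which means that the support, a priori defined in terms of a chosen representative object $X\in\cC$, depends only on the isomorphism class of $X$ in the quotient category $\cC/\cT$.

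First I would address well-definedness. Recall from the equivalent conditions in the proposition that $S\in\cT$ lies in the support of an indecomposable $X$ iff $\Hom_{\cC/\cT}(\t^{-1}S,X)\neq 0$, and this is an intrinsic invariant of $X$ as an object of $\cC/\cT$, since $\Hom_{\cC/\cT}(\t^{-1}S,-)$ is by definition a functor on $\cC/\cT$. Hence if $X\cong X'$ in $\cC/\cT$, then for every $S\in\cT$ we have $\Hom_{\cC/\cT}(\t^{-1}S,X)\cong\Hom_{\cC/\cT}(\t^{-1}S,X')$, so the two supports agree. The only subtlety is that $X$ may decompose differently in $\cC/\cT$ than in $\cC$: an indecomposable $M(x,y)$ of $\cC$ with $M(x,y)$ or $M(x,y)'$ lying in $\cT$ becomes zero in $\cC/\cT$, but such a summand contributes nothing new to the support by the first part of the preceding proposition (support of an object of $\cT$ is empty), and otherwise indecomposables of $\cC$ outside $add\,\cT$ remain indecomposable in $\cC/\cT$; so the support computed from any $\cC$-representative, summand by summand, matches the support as an invariant of $\cC/\cT$. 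Thus support descends to a well-defined function on objects of $\cC/\cT$.

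Next, the characterization ``$X=0$ in $\cC/\cT$ iff support empty.'' By the preceding proposition, an object $Y\in\cC$ has empty support iff $Y\in add\,\cT$, i.e. iff $Y=0$ in $\cC/\cT$. Combined with well-definedness, this gives: $X\in\cC/\cT$ has empty support iff it has a $\cC$-representative in $add\,\cT$ iff $X=0$. For the last claim, ``support finite iff $X\in\cX/\cT$,'' I would again transfer the corresponding statement of the preceding proposition: a $\cC$-object $Y$ has finite support iff $Y\in\cX$. Since $\cX/\cT$ is by definition the image of $\cX$ in $\cC/\cT$, one direction is immediate. For the converse, if $X\in\cC/\cT$ has finite support, pick any $\cC$-representative $Y$; by well-definedness $Y$ has finite support, so $Y\in\cX$ by the proposition, whence $X=Y\in\cX/\cT$.

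I do not expect a serious obstacle here; the corollary is essentially a bookkeeping consequence of the preceding proposition once well-definedness is checked. The one place deserving a careful sentence is precisely that check: one must confirm that passing to the quotient $\cC/\cT$ neither creates new indecomposable summands with nonempty support nor destroys the support information of the surviving summands — and this follows because $\cC\to\cC/\cT$ kills exactly the objects of $add\,\cT$, all of which have empty support, while the functoriality of $\Hom_{\cC/\cT}(\t^{-1}S,-)$ guarantees isomorphism-invariance of the remaining contributions.
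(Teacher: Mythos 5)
Your proposal is correct and matches the paper's (implicit) reasoning: the paper states this as an immediate corollary of the preceding proposition, relying exactly on the fact that the support is detected by the functors $\Hom_{\cC/\cT}(\t^{-1}S,-)$, hence is an invariant of the object in $\cC/\cT$, while the summands killed by the quotient (those in $add\,\cT$) have empty support. Your extra care about well-definedness and the transfer of the finiteness/emptiness criteria is exactly the bookkeeping the paper leaves to the reader.
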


\subsection{Kernels in $\cX/\cT$} To show that the category $\cX/\cT$ has kernels, we need the right $add\,\cT$ approximation of each object in $\cX$.

\begin{prop}\label{T-approximation of objects of X}
If $X=M(x,y)$ is any object in $\cX$ which is not in $\cT$ then there is an exact sequence 
\[
0\to A\to B\to M(x,y)\to0
\]
in the exact category $\cB$ where $A,B$ are objects of the inverse image $\tilde\cT$ of $\cT$ in the open strip $\cU\subseteq\cB$. Furthermore, $B$ is a minimal right $add\,\cT$ approximation of $M(x,y)$ in $\cC$.
\end{prop}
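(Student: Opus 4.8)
The plan is to build the approximation explicitly from the combinatorial description of $\cT$ and the $\Hom$-computation in the previous lemma. Write $X = M(x,y)$ with $x,y$ rational (in the sense of the paper), so $x = a\pi/2^N$ and $y = b\pi/2^N$ for some common large $N$ with $|a-b| < 2^N$. First I would identify the objects of $\cT$ that map to $X$ and are ``maximal'' with this property. By the $\Hom$-lemma, an indecomposable $S = M(p,q) \in \cT$ has $\Hom_\cC(S, M(x,y)) \neq 0$ iff $y - \pi < p \le x$ and $x - \pi < q \le y$; among these, the relevant ones for a minimal approximation are those for which the closed rectangle $[p,x]\times[q,y]$ meets $\cT$ only at $S$ itself (else the map factors through another cluster object). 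Geometrically, walking northwest from $(x,y)$ inside the strip $\cU$, the first cluster objects one meets are: the object $S_1 \in \cT$ with the largest first coordinate $p_1 \le x$ having second coordinate equal to $y$ (a ``horizontal neighbor''), and the object $S_2 \in \cT$ with the largest second coordinate $q_2 \le y$ having first coordinate equal to $x$ (a ``vertical neighbor''); when $x$ or $y$ is already achieved in $\cT$ these degenerate, and since $X \notin \cT$ at least one of them is a genuine new object. Set $B = S_1 \oplus S_2$ (dropping a summand if it equals $0$ or coincides with the other).

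Next I would verify that the basic morphisms $S_1 \to X$ and $S_2 \to X$ together form a right $add\,\cT$ approximation. Given any $T \in \cT$ and any $g: T \to X$, by the $\Hom$-lemma $T = M(p,q)$ lies in the half-open rectangle with $y-\pi < p \le x$, $x - \pi < q \le y$, and the rectangle $[p,x]\times[q,y]$ misses $\cT$. I claim $T$ lies northwest of $S_1$ or northwest of $S_2$, i.e.\ $p \le p_1, q \ge y$ or $p \ge x, q \le q_2$ — otherwise the point $(p_1, q)$ or $(p, q_2)$ (as the case may be) would be a cluster object strictly inside $[p,x]\times[q,y]$, using the structure of $\cT$ (every $\cT$-object $M(c\pi/2^n, (c+2^n-1)\pi/2^n)$ lies on an antidiagonal, and the strip is foliated by these). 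Hence $g$ factors through the corresponding basic morphism $S_i \to X$ (all nonzero $\Hom$'s in $\cB$ being scalar multiples of the basic one along a composable chain), so $B \to X$ is an $add\,\cT$ approximation. Minimality is then the statement that no proper summand of $B$ still surjects this way, which holds because $S_1$ and $S_2$ are the unique closest cluster objects in their respective directions and neither basic map $S_i \to X$ factors through the other $S_j$ (their rectangles to $X$ are ``nested the wrong way'').

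Finally I would produce the exact sequence $0 \to A \to B \to X \to 0$ in $\cB$. In $\cB$ the map $B \to X$ is $(1, -1): S_1 \oplus S_2 \to X$ in basic coordinates; its kernel $A$ in the exact category $\cB$ is computed coordinatewise, and using the nonsplit sequence (\ref{nonsplit sequence in B}) one sees $A$ is again a single indecomposable of the form $M(p_1, q_2)$ (or $0$ if $B$ had only one summand and the map was the identity pattern), which again lies on an antidiagonal through cluster points, so $A \in \tilde\cT$. Checking exactness just means checking split-exactness after each projection $\pi_1, \pi_2$, which is immediate from the linear-category structure of $\cP$.

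The main obstacle I expect is the bookkeeping around the degenerate cases and the precise meaning of ``closest cluster object in the northwest/vertical/horizontal direction'' — one must be careful that such objects exist (they do, by discreteness of $\cT$ together with the fact that along the line $\{q = y\}$ or $\{p = x\}$ the cluster points accumulate only at the boundary of the Moebius band, which $X \notin \cT$ rational is bounded away from in the relevant direction) and that the two of them suffice, i.e.\ that the approximation really is generated in ``$\le 2$'' objects. This is exactly the infinite-quiver analogue of the fact that in a cluster-tilted setting the projective cover of a module is built from at most two arrows out of a vertex, and it should follow cleanly from the antidiagonal foliation of the strip $\cU$ by $\cT$-objects.
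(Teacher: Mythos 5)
Your construction of $B$ is too small, and the key factorization claim fails. You take $B=S_1\oplus S_2$, the nearest cluster object on the horizontal line through $y$ and the nearest one on the vertical line through $x$ (the paper's $Z=(a,y)$ and $Y=(x,b)$). But a right $add\,\cT$ approximation of $X$ must absorb maps from \emph{every} cluster object in the rectangle $R_X=[a,x]\times[b,y]$, and these objects form a zig-zag (the unique minimal walk from $Y$ to $Z$) which in general has interior sinks; maps from those interior sinks do not factor through $Y\oplus Z$. Concrete counterexample: $X=M(\pi/8,\pi/8)$. Here $Y=(\pi/8,-3\pi/4)$ and $Z=(-3\pi/4,\pi/8)$, while $(0,0)\in\cT$ lies in $R_X$ with $\Hom_\cC((0,0),X)\neq0$; but $\Hom_\cC((0,0),Y)=0=\Hom_\cC((0,0),Z)$, since in either case one coordinate would have to decrease, so the basic map $(0,0)\to X$ cannot factor through $S_1\oplus S_2$. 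The flaw in your argument is the step ``otherwise $(p_1,q)$ or $(p,q_2)$ would be a cluster object strictly inside $[p,x]\times[q,y]$'': the cluster is not stable under mixing coordinates of its members, and in the example the point you would produce, $(-3\pi/4,0)$, lies outside the rectangle $[0,\pi/8]\times[0,\pi/8]$, so no contradiction arises. (Note also that by your own criterion --- rectangles meeting $\cT$ only at $S$ --- the object $(0,0)$ is ``relevant'' here, so it should have been a summand of $B$.) The kernel claim fails for the same reason: $A$ need not be a single indecomposable, and in the example your candidate $(p_1,q_2)=(-3\pi/4,-3\pi/4)$ is not even in $\cT$, contradicting the statement you are trying to prove.

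The paper's proof proceeds differently: it first establishes (via the lemma on minimal walks) that the objects of $R_X\cap\cT$ all lie on a unique minimal walk from $Y$ to $Z$, a staircase of irreducible maps with arrows pointing up and to the right. Then $B$ is the direct sum of \emph{all} sink points $B_0=Y,B_1,\dots,B_n=Z$ of this walk (there can be arbitrarily many), $A$ is the direct sum of the source points $A_1,\dots,A_n$, and the approximation property is proved by a straight-line argument: any $S\in\cT$ mapping to $X$ is southwest of $X$, the segment from $S$ to $X$ must cross the staircase, and sliding along the arrows at the crossing one reaches some $B_i$ through which the map factors. Exactness of $0\to\bigoplus A_i\to\bigoplus B_i\to X\to0$ is then checked coordinatewise, as you intended at the end. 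Your two objects $S_1,S_2$ are exactly the two endpoint sinks of that walk, so your construction agrees with the paper's precisely when the walk has no interior sinks (no ``up-then-left'' corners), but not in general.
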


The components of $A,B$ lie on a ``minimal walk'' in $\cT$ which we now define. A \emph{walk} from $Y$ to $Z$ in $\cT$ is a sequence of distinct objects $C_0=Y,C_1,\cdots,C_n=Z$ and irreducible basic morphisms between consecutive objects $C_i$ ($C_i\to C_{i+1}$ or $C_i\ot C_{i+1}$) where \emph{irreducible} means the morphism does not factor through any other object of $\cT$. The walk will be called \emph{nondegenerate} if the composition of any two composable morphisms in the walk is nonzero. The \emph{length} of a walk is the number $n\ge0$ of irreducible morphisms.

\begin{lem} Any minimal walk (walk of minimal length) is nondegenerate.
\end{lem}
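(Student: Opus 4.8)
The plan is to argue by contradiction. Suppose $W\colon Y=C_0,C_1,\dots ,C_n=Z$ is a walk of minimal length which is degenerate, so that $W$ contains a composable pair of irreducible basic morphisms whose composite is $0$. Reversing the whole walk if necessary, we may assume this pair is $C_{i-1}\xrightarrow{\,f\,}C_i\xrightarrow{\,g\,}C_{i+1}$ with $g\circ f=0$; the opposite orientation is treated the same way, using that $\cC/\cT$ is isomorphic to its opposite. It suffices to produce an irreducible basic morphism between $C_{i-1}$ and $C_{i+1}$: the subwalk $C_{i-1}\to C_i\to C_{i+1}$ can then be replaced by that single morphism, which deletes $C_i$, leaves the remaining $C_j$ distinct, and so gives a walk of length $n-1$ from $Y$ to $Z$, contradicting minimality. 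We will in fact see that a degenerate composable pair forces one of two configurations: one of them cannot occur in a walk at all, and the other is precisely the configuration in which such a shortcut exists.

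First I would record that consecutive objects of a walk share an end. If $C_{i-1}=M(a_1,b_1)$ and $C_i=M(a_2,b_2)$ are nonisomorphic objects of $\cT$ with $\Hom_\cC(C_{i-1},C_i)\ne 0$, then compatibility gives $\Hom_\cC(C_i,C_{i-1})=0$; running this through the $\Hom$-criterion of the Lemma above --- checking the finitely many pairs of representatives $M(x,y),M(y+\pi,x+\pi)$ and using that $|y-x|<\pi$ for every object --- forces $a_1=a_2$ or $b_1=b_2$, i.e. $C_{i-1}$ and $C_i$ share one end of $C_i$ while their other ends are strictly separated. Applying this to both edges of the composable pair gives two cases. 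Case 1: $C_{i-1}$ and $C_{i+1}$ share the same end of $C_i$. Then, in suitable coordinates (or the mirror version), $C_{i-1}=M(z,b_1),\,C_i=M(z,b_2),\,C_{i+1}=M(z,b_3)$ with $b_1<b_2<b_3$, and the $\Hom$-criterion together with $b_3-z<\pi$ (valid because $C_{i+1}$ is an object) shows $\Hom_\cC(C_{i-1},C_{i+1})\ne 0$ and that the basic composite realizes this nonzero map; so $g\circ f\ne 0$, contradicting degeneracy --- this case does not occur. Case 2: $C_{i-1}$ and $C_{i+1}$ share the two \emph{different} ends of $C_i$; choosing coordinates with $C_i=M(a,b')$, $C_{i-1}=M(a,b)$ sharing the left end ($b<b'$) and $C_{i+1}=M(a'',b')$ sharing the right end ($a<a''$), the other configuration being the mirror image, the $\Hom$-criterion shows that $g\circ f=0$ holds exactly when $a''-b\ge\pi$, splitting into the sub-cases $a''-b>\pi$ and $a''-b=\pi$.

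In the sub-case $a''-b>\pi$ the four ends $a<b+\pi<a''<b'+\pi$ of $C_{i-1},C_i,C_{i+1}$ are distinct, and I claim the object $M(a,a''-\pi)$ --- the chord with ends $a$ and $a''$ --- is compatible with every object of $\cT$: since $f$ is irreducible no object of $\cT$ lies in the rectangle $[a,a]\times[b,b']$, so $C_{i-1}$ and $C_i$ bound a complementary region of $\cT$ at whose vertex $a$ the point $a''$ is also a vertex, and $M(a,a''-\pi)$ is a diagonal of that region and hence crosses nothing in $\cT$. By maximality $M(a,a''-\pi)$ must then already lie in $\cT$; but $b\le a''-\pi\le b'$ puts it in the rectangle $[a,a]\times[b,b']$, so $f$ factors through it, contradicting irreducibility of $f$. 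Hence $a''-b>\pi$ cannot occur in a walk either. In the remaining sub-case $a''-b=\pi$ we have $a''=b+\pi$, so $C_{i-1}=M(a,b)$ and $C_{i+1}=M(b+\pi,b')$ have the common end $b+\pi$ --- geometrically $C_{i-1},C_i,C_{i+1}$ are the three sides of an internal triangle of $\cT$ --- and the $\Hom$-criterion gives $\Hom_\cC(C_{i+1},C_{i-1})\ne 0$. This morphism is irreducible: the rectangle describing its factorizations is $\{b+\pi\}\times[b',a+\pi]$, and every object strictly inside it crosses $C_i$ and so is not in $\cT$. Replacing $C_{i-1}\to C_i\to C_{i+1}$ by this morphism yields a walk of length $n-1$, contradicting minimality of $W$ and finishing the proof.

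The step I expect to be the main obstacle is the geometric assertion in the sub-case $a''-b>\pi$: that ``$\cT$ is a discrete maximal compatible set'' really does force the chord $M(a,a''-\pi)$ into $\cT$. To keep everything inside the framework already set up, I would phrase this purely via the $\Hom$-criterion and maximality, showing directly that $\cT\cup\{M(a,a''-\pi)\}$ is again a discrete pairwise compatible set strictly containing $\cT$; compatibility of the new object with each object of $\cT$ is checked from the $\Hom$-criterion, using that the irreducibility of $f$ rules out the only objects of $\cT$ that could share the end $a$ with it and lie ``between'' $C_{i-1}$ and $C_i$, and that any other object of $\cT$ near it would have to cross $C_{i-1}$ or $C_i$. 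The only delicate bookkeeping throughout is keeping track of which of the two representatives of each object to use in these Hom computations.
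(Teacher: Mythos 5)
Your overall strategy matches the paper's: a zero composite of two consecutive irreducible maps should force an irreducible morphism between $C_{i-1}$ and $C_{i+1}$, shortening the walk. Your preliminary reductions are correct (consecutive objects of a walk share an end; in Case 1 the composite is nonzero; in the sub-case $a''-b=\pi$ the morphism $C_{i+1}\to C_{i-1}$ exists, is irreducible, and shortens the walk). The gap is exactly where you predicted it, in the sub-case $a''-b>\pi$, and the justification you sketch there does not work as stated. You claim that any object of $\cT$ crossing the chord $N=M(a,a''-\pi)$ would either be excluded by irreducibility of $f$ or would have to cross $C_{i-1}$ or $C_i$. That is false: the objects $T=M(p,b')$ with $b+\pi\le p<a''$ (ends $p$ and $b'+\pi$) do cross $N$, since $a<p<a''$ while $b'+\pi$ lies in the complementary arc, yet they share the end $b'+\pi$ with $C_i$ and $C_{i+1}$, so they cross none of $C_{i-1},C_i,C_{i+1}$, and they do not have the end $a$, so irreducibility of $f$ says nothing about them; your ``diagonal of the complementary region'' picture fails because $a''$ lies in the interior of the boundary arc $[b+\pi,b'+\pi]$ of that region. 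What actually rules these $T$ out is irreducibility of $g$: such a $T$ lies strictly inside the factorization rectangle $[a,a'']\times\{b'\}$ of $g:C_i=M(a,b')\to C_{i+1}=M(a'',b')$, hence cannot lie in $\cT$. A full check shows that the chords crossing $N$ and compatible with $C_{i-1},C_i,C_{i+1}$ are exactly these $T$, so with this repair (compatibility with $C_{i+1}$ together with irreducibility of $g$) the sub-case is indeed impossible; one must also verify, as you note, that $\cT\cup\{N\}$ remains discrete, which holds since $\cT_0$ is closed and discrete in the open Moebius band.

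For comparison, the paper sidesteps this entire analysis by using the explicit structure of the standard cluster: since $\Aut\cT$ acts transitively on the objects of $\cT_0$, one may place the middle object at $M(0,0)$, where the four irreducible maps are known explicitly; the zero-composite pairs are then visibly two sides of a triangle whose third side, e.g. $M(\pi/2,0)\to M(\pi/2,\pi)'=M(0,-\pi/2)$, is the irreducible shortcut, so the configuration $a''-b>\pi$ never arises and no maximality argument is needed. Your route is more intrinsic and would apply to any cluster for which ``compatible with every object of $\cT$ implies membership in $\cT$'' holds, but as written the key compatibility claim is incompletely justified.
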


\begin{proof}
If the composition of two irreducible morphisms $A\to B,B\to C$ is zero then we claim that there is an irreducible morphism $C\to A$. To prove this, we can apply an automorphism of $\cT$ and assume that $B=M(0,0)$, $A=(0,-\pi/2)$ and $C=(\pi/2,0)$. Then we have an irreducible morphism
\[
	C=\left(\frac\pi2,0\right)\to \left(\frac\pi2,\pi\right)'=\left(0,-\frac\pi2\right)=A
\] as claimed showing that the walk is not minimal
\end{proof}

Now we consider properties of a minimal walk from $Y$ to $Z\neq Y$. By an automorphism of $\cT$ any by possibly interchanging $Y,Z$ we may assume that $Y=M(0,0)$ and $Z=(a\pi/2^n,b\pi/2^n)$ where $-2^n<a\le0$, $b=2^n+a-1$ and $n\ge1$. Note that $a+b$ is odd. So, one of the coordinates of $Z$ is a reduced fraction (times $\pi$) and one is not. Thus $n=n(Z)$ is well defined.

There are exactly 4 irreducible maps at $Y$:
\[
	(0,0)\to \left(\tfrac\pi2,0\right), \quad(0,0)\to \left(0,\tfrac\pi2\right),\quad	\left(-\tfrac\pi2,0\right)\to(0,0),\quad \left(0,-\tfrac\pi2\right)\to(0,0)
\]
They all connect $Y$ to an object with $n=1$. Since $\Aut\cT$ acts transitively on objects, there are also exactly 4 irreducible maps at $Z$. Two increase $n$, one decreases $n$ and one keeps $n$ the same.
\[
\xymatrix{
& \left(\dfrac{2a\pi}{2^{n+1}},\dfrac{(2b+1)\pi}{2^{n+1}}\right)\ar[dl]\\
\left(\dfrac{(2a-1)\pi}{2^{n+1}},\dfrac{2b\pi}{2^{n+1}}\right)\ar[r]^(.6)f
&\left(\dfrac{a\pi}{2^{n}},\dfrac{b\pi}{2^{n}}\right)\ar[u]_g
	}
\]
This diagram shows all 4 cases if we let $Z$ be each of the three objects. (There are actually 6 cases depending on which of the coordinates of $Z$ is reduced.) We conclude that all walks from $Y=(0,0)$ to $Z$ have length at least $n$. Since there is an irreducible morphism connecting $Z$ to an object $W$ with $n(W)=n-1$, there is a walk with length equal to $n$. So, this walk must be minimal. Since $W$ is unique, we also conclude by induction on $n$ that the minimal walk from $Y$ to $Z$ is unique. We also note that the irreducible map connecting $Z,W$ is either horizontal $f:Z\to W$ or vertical:
\[\xymatrixrowsep{10pt}\xymatrixcolsep{10pt}
\xymatrix{
Z\\
W\ar[u]_g
	}
\]
This implies that all steps in the minimal walk from $Y$ to $Z$ go up and to the left (with arrows pointing up and right) in the sense that $C_{i+1}$ is located above or to the left of $C_i$ in the minimal walk $C_0=X,C_1,\cdots,C_n=Y$.

We record this and prove one more property of minimal walks.

\begin{lem}\label{lem:properties of minimal walk}
Any two objects $Y,Z$ of $\cT$ are connected by a unique minimal walk. The coordinates $Y=(c,d),Z=(a,b)$ can be chosen so that the walk goes from the lower right corner to the upper left corner of the closed rectangle
$
	R=[a,c]\times [d,b]
$ in vertical and horizontal steps as described above. Furthermore, this minimal walk goes through every object in $R\cap \cT$.
\end{lem}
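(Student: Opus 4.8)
The first two sentences merely record what the discussion just above has established: applying an automorphism of $\cT$ and possibly interchanging $Y,Z$, we may take $Y=M(0,0)$ and $Z=(a\pi/2^n,b\pi/2^n)$ with $-2^n<a\le 0$, $b=2^n+a-1$ and $n=n(Z)$. Then $a\le 0\le b$, so $Z$ is the upper left corner and $Y$ the lower right corner of $R=[a\pi/2^n,0]\times[0,b\pi/2^n]$, and the unique minimal walk $C_0=Y,C_1,\dots,C_n=Z$, of length $n$, moves weakly up or weakly left at each step and hence stays in $R$; undoing the automorphism yields the rectangle $R=[a,c]\times[d,b]$ of the statement. The content is therefore the equality $\cT\cap R=\{C_0,\dots,C_n\}$; since $\supseteq$ is clear, one must show that every object of $\cT$ lying in $R$ is one of the $C_i$, and I would prove this by induction on $n=n(Z)$.

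For $n=0$ the rectangle is a point, and for $n=1$ it is a segment, on which the claim is precisely that the single edge of the walk does not factor through a third object of $\cT$. For $n\ge 2$ the idea is to peel off the terminal edge of the walk: let $W=C_{n-1}$ be the unique neighbor of $Z$ in $\cT$ with $n(W)=n-1$. By the discussion above the minimal walk $Y\to Z$ is the minimal walk $Y\to W$ followed by the edge joining $W$ and $Z$; the pair $(Y,W)$ is again in normal form, so the induction hypothesis identifies $\cT\cap R'$ with $\{C_0,\dots,C_{n-1}\}$, where $R'\subseteq R$ is the sub-rectangle with corners $Y$ and $W$. It then remains only to show $\cT\cap(R\setminus R')=\{Z\}$.

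This should follow from two coordinate facts. First: since $b-a=2^n-1$, every point $(x,y)\in R$ has $y-x\le\pi-\pi/2^n$ and so lies at taxicab distance at least $\pi/2^n$ from the boundary of $\cM$, whereas an object of $\cT$ with $n(\cdot)=\ell$ lies at distance exactly $\pi/2^\ell$ from that boundary; hence no object of $\cT$ in $R$ has $n(\cdot)>n$, and a one-line comparison of coordinates shows $Z$ is the unique object of $\cT$ in $R$ with $n(\cdot)=n$. Second: the edge $W\leftrightarrow Z$ is horizontal or vertical of length $\pi/2^n$, so $R\setminus R'$ is the thin strip of thickness $\pi/2^n$ extending $R'$ across that edge; writing a level-$\ell$ object of $\cT$ as $(m\pi/2^\ell,(m+2^\ell-1)\pi/2^\ell)$, the requirement that it meet this strip pins its coordinate in the thin direction to a half-open interval of length $1$, hence to the single value it has at the corner $Z$, and matching denominators then forces $2^{n-\ell}$ to divide the corresponding numerator of $Z$---namely $b$ if the edge is vertical, $a$ if it is horizontal. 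But $a+b=2a+2^n-1$ is odd, so exactly one of $a,b$ is odd, and it is precisely that numerator (one checks from the preceding analysis that the edge is vertical exactly when $b$ is odd and horizontal exactly when $a$ is odd); therefore $n-\ell=0$. Combining the two facts, any object of $\cT$ in $R\setminus R'$ has $n(\cdot)=n$ and thus equals $Z$, so $\cT\cap R=(\cT\cap R')\cup\{Z\}=\{C_0,\dots,C_n\}$, completing the induction.

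The part I expect to take the most care is the inductive step: tracking which end of $Z$ is the reduced dyadic fraction and therefore in which direction the terminal edge of the walk points and which thin strip $R\setminus R'$ actually is, together with being precise about the sense in which the normalizing automorphism puts $\cT$ into standard dyadic position throughout a neighborhood of $R$. Once those are settled, the two coordinate facts are short computations. (An alternative to the thin-strip step is to prove directly, by a similar length-one-interval computation, that $\cT\cap R$ contains at most one object at each level $\ell\le n$, and then to observe that for $S\in\cT\cap R$ the minimal walks $Y\to S$ and $Y\to Z$ cannot first diverge at any vertex, since a divergence would produce two distinct objects of the same level lying in $R$.)
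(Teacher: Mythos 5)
Your argument is correct, but it is genuinely different from the one in the paper. After the same normalization ($Y=M(0,0)$, $Z=(a\pi/2^n,b\pi/2^n)$ in the standard form), the paper does not induct on $n$ at all: it takes an arbitrary $W\in R\cap\cT$, observes that the unique minimal walk from $Y$ to $W$ also proceeds by up/left steps and hence stays in $R$, and then notes that at the starting vertex $(0,0)$ exactly one of the two possible first steps, $(-\pi/2,0)$ or $(0,\pi/2)$, lies in $R$ (one needs $a\le -2^{n-1}$, the other $a\ge -2^{n-1}+1$), so the walks $Y\to W$ and $Y\to Z$ are forced to agree at the first step and, repeating the argument at each subsequent vertex, at every step; hence one walk is an initial segment of the other and $W$ lies on the walk to $Z$. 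Your proof instead peels off the terminal edge $W=C_{n-1}\leftrightarrow Z$ and shows by explicit dyadic estimates that the strip $R\setminus R'$ contains no cluster object other than $Z$: the bound $y-x\le\pi-\pi/2^n$ on $R$ caps the level at $n$, and the parity of $a$ or $b$ (matching the direction of the terminal edge, which you identified correctly) pins the level to exactly $n$, with $Z$ the unique level-$n$ object in $R$. Both arguments are sound; the paper's is shorter and purely combinatorial, reusing the uniqueness of minimal walks and avoiding any computation beyond the first-step dichotomy, while yours is more computational but yields extra quantitative information along the way (no cluster object of level $>n$ meets $R$, and each level occurs at most once), at the cost of having to verify that the normal form propagates to the pair $(Y,W)$ in the induction -- which you did check correctly.
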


\begin{proof}
It remains only to prove the last statement. By applying an automorphism of $\cT$ we may assume that $c=d=0$. Then $R=[a,0]\times [0,b]$. Suppose that $W=(x,y)$ is any object in $R\cap \cT$. Consider the unique minimal path from $Y$ to $W$. Since each step in this path goes up or to the right, this path lies in $R$. The first step in the minimal path from $Y=(0,0)$ to $W$ is equal to either $(-\pi/2,0)$ or $(0,\pi/2)$. But only one of these two points lies in the rectangle $R$. Therefore, the first step in the minimal path from $Y$ to $W$ is equal to the first step in the minimal path from $Y$ to $Z$. Similarly, each subsequent step in the two paths are equal. So, one path is contained in the other. In particular, $W$ lies in the unique minimal path from $Y$ to $Z$ as claimed.
\end{proof}

\begin{proof}[Proof of Proposition \ref{T-approximation of objects of X}]
Given $X=M(x,y)\in\cX$ let $a<x,b<y$ be maximal so that $Y=(x,b)$ and $Z=(a,y)$ lie in $\cT$. Consider the unique minimal walk from $Y$ to $Z$. By the lemma above, this walk is contained in the closed rectangle $R_X=[a,x]\times [b,y]$. Furthermore, all points of $R_X\cap\cT$ lie in this walk. Let $A_1,\cdots,A_n$ be the source points in this walk and let $B_0=Y,B_1,\cdots,B_{n-1},B_n=Z$ be the sink points in this walk. (See Figure \ref{T-approx} below.)
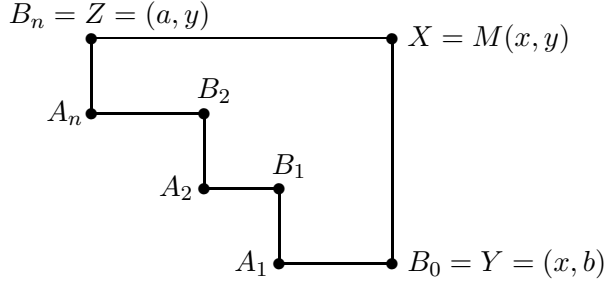
\begin{figure}[htbp]
\begin{center}
%
{
\setlength{\unitlength}{1cm}
{\mbox{
\begin{picture}(6,3.5)
\put(0,0.2){
      \thicklines
    \put(5,0){
         \put(-.1,-.1){$\bullet$}
           \put(.2,-.1){$B_0=Y=(x,b)$}
          \qbezier(0,0)(-.5,0)(-1.5,0)
          \thinlines
          \qbezier(0,0)(0,1)(0,3)    
          \thicklines
          }
    \put(3.5,0){
         \put(-.1,-.1){$\bullet$}
	\put(-.6,-.1){$A_1$}    
	          \qbezier(0,0)(0,.5)(0,1)    
	}
    \put(2.5,1){
         \put(-.1,-.1){$\bullet$}
	\put(-.6,-.1){$A_2$}    
	          \qbezier(0,0)(0,.5)(0,1)    
	          \qbezier(0,0)(.5,0)(1,0)
         \put(0.9,-.1){$\bullet$}
	\put(.9,0.2){$B_1$}	          
	}
    \put(1,2){
         \put(-.1,-.1){$\bullet$}
	\put(-.6,-.1){$A_n$}    
	          \qbezier(0,0)(0,.5)(0,1)    
	          \qbezier(0,0)(.5,0)(1.5,0)
         \put(1.4,-.1){$\bullet$}
	\put(1.4,0.2){$B_2$}	          
         \put(-.1,.9){$\bullet$}
	\put(-1.1,1.2){$B_n=Z=(a,y)$}  
	\thinlines          
         \qbezier(0,1)(1,1)(4,1)
         \put(3.9,.9){$\bullet$}
         \put(4.2,.9){$X=M(x,y)$}
	}
          }
\end{picture}}
}}
\caption{The walk associated to $X=M(x,y)$ with endpoints $B_0=Y$ and $B_n=Z$.}
\label{T-approx}
\end{center}
\end{figure}

The sum of basic morphisms $B_0\oplus B_1\oplus\cdots\oplus B_n\to X=M(x,y)$ is a right $add\,\cT$ approximation. To see this take any object $S\in\cT$ in the support of $\Hom_{\cC/\cT}(-,M(x,y))$. Then $S$ is southwest of $(x,y)$. Draw a straight line from $S$ to $(x,y)$. This line must pass through the walk from $Y=B_0$ to $Z=B_n$ since $S$ cannot be in the enclosed region by the lemma above. Follow the path either right or up to reach one of the $B_i$. This shows that any morphism $S\to M(x,y)$ factors through some $B_i$. Let $\bigoplus A_i\to \bigoplus B_i$ be the morphism given by taking the sum of the basic morphisms $A_i\to B_i$ minus the sum of the basic morphisms $A_i\to B_{i-1}$. Then
\[
	0\to\bigoplus A_i\to \bigoplus B_i\to M(x,y)\to0
\] is exact since it is split exact in each coordinate.
\end{proof}

\begin{prop}\label{X/T has kernels}
The rational cluster-tilted category $\cX/\cT$ has kernels and cokernels.
\end{prop}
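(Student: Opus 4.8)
The plan is to follow the method of Buan-Marsh-Reiten and Koenig-Zhu for passing from a triangulated category to the abelian quotient by a cluster-tilting subcategory, but to keep every step explicit by means of the minimal right $add\,\cT$-approximations provided by Proposition~\ref{T-approximation of objects of X}. First I would reduce to constructing kernels: the assignment $M(x,y)\mapsto M(-y,-x)$ reverses every basic morphism, intertwines $T$ with $T^{-1}$, preserves the standard cluster, and preserves $\cX$, so it induces a duality $\cX/\cT\simeq(\cX/\cT)^{\mathrm{op}}$, and cokernels will then follow from kernels.

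Given $\bar f\colon X\to Y$ in $\cX/\cT$, choose a representative $f\colon X\to Y$ in $\cX$. A direct summand of $X$ or of $Y$ that lies in $add\,\cT$ is a zero object of $\cX/\cT$, so we may assume neither $X$ nor $Y$ has such a summand; and if $Y\in add\,\cT$ then $\bar f=0$ and $\ker\bar f=X$, so assume not and let $p\colon B\to Y$ be the minimal right $add\,\cT$-approximation of Proposition~\ref{T-approximation of objects of X}. Because $p$ is a right approximation, a morphism $w\colon W\to X$ of $\cC$ satisfies $\overline{fw}=0$ if and only if $fw$ factors through $p$. Complete $(f,-p)\colon X\oplus B\to Y$ to a distinguished triangle
\[
	E\xrightarrow{(e,q)} X\oplus B\xrightarrow{(f,-p)} Y\to TE
\]
in $\cC$. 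Then $\bar f\bar e=0$ since $fe=pq$, and $E$ lies in $\cX$ because $\cX$ is closed under cones and shifts, as one reads off the explicit distinguished triangles of Section~1. Applying $\Hom_\cC(W,-)$ to this triangle and combining exactness at $\Hom_\cC(W,X\oplus B)$ with the factorization criterion above shows that $\bar e\colon E\to X$ is a \emph{weak kernel} of $\bar f$ in $\cX/\cT$, i.e.\ every $\bar w\colon W\to X$ with $\bar f\bar w=0$ factors through $\bar e$.

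The remaining step --- and the decisive one --- is to upgrade this weak kernel to a genuine kernel. Decomposing $E=\bigoplus_i E_i$ into indecomposables, let $K$ be the sum of those $E_i$ on which $\bar e$ is nonzero and $\bar k\colon K\to X$ the restriction of $\bar e$; one checks immediately that $\bar k$ is again a weak kernel of $\bar f$, so what remains is to prove that $\bar k$ is a monomorphism. This is where the geometry of $\cC$ has to enter. Using the description, in the Lemma of \S2.1, of the nonvanishing of $\Hom_{\cC/\cT}(M(a,b),M(c,d))$ in terms of a rectangle disjoint from $\cT$, together with the fact (Lemma~\ref{lem:properties of minimal walk}) that the minimal walk between two objects of $\cT$ meets precisely the cluster objects in the corresponding rectangle, I would, in the case of indecomposable $X$ and $Y$, identify $K$ and $\bar k$ explicitly and check directly that $\bar k\bar t=0$ implies $\bar t=0$; the general case then reduces to this one summand at a time. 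Equivalently, one proves that a right-minimal weak kernel in $\cX/\cT$ is automatically monic. Once kernels are known to exist, cokernels follow from the self-duality established in the first paragraph, which completes the proof.

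The monomorphy of the minimal weak kernel is where I expect the genuine difficulty: since $\cB$ is exact but not abelian and $\cC$ is only triangulated, no kernel is present on the nose in the ambient categories, so the argument cannot be purely formal but must invoke the combinatorics of rectangles and minimal walks --- both to pin down the kernel object inside $\cX$ and to rule out that the comparison morphism $K\to X$ has a kernel of its own. By contrast, the reduction by duality and the homotopy-pullback bookkeeping in the first two steps are routine.
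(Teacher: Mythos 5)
Your construction of the candidate kernel coincides with the paper's — complete $(f,-p):X\oplus B\to Y$ over the minimal right $add\,\cT$-approximation $p$ to a triangle and take $e:E\to X$ — and your verification of the weak-kernel property and the reduction of cokernels to kernels by self-duality are both correct. The gap is precisely the step you defer: the proof that $\bar e$ (or your minimal summand $\bar k$) is a monomorphism in $\cX/\cT$ is never carried out, only announced as a case-by-case check with rectangles and minimal walks for indecomposable $X,Y$, ``one summand at a time.'' That reduction is itself not justified: monomorphy of $\bar k:K\to X$ is not detected summandwise on $K$, nor does the kernel of a map into a decomposable $Y$ split up in any obvious way, so the plan as stated does not amount to a proof. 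Moreover your closing assessment — that this step ``cannot be purely formal'' — is exactly backwards: the paper closes it with a short formal argument of Buan--Marsh--Reiten/Koenig--Zhu type, and no geometry beyond Proposition \ref{T-approximation of objects of X} is needed.

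The ingredient you never use is the first half of Proposition \ref{T-approximation of objects of X}: the fiber of the minimal approximation $p:T^Y\to Y$ is again an object $T_1\in add\,\cT$. By the octahedral axiom your $E$ then sits in a triangle $T_1\to E\xrightarrow{\;e\;} X\to T_1'$, so the fiber of $e$ lies in $add\,\cT$. Also, for every $S\in\cT$, any morphism $g:S\to X$ lifts through $e$: indeed $fg$ factors through the approximation $p$, say $fg=ph$, so $(f,-p)$ kills $\binom gh$ and $g$ factors through $E$. Now suppose $t:Z\to E$ is such that $et$ factors through $add\,\cT$, say $et=rs$ with $s:Z\to S$, $r:S\to X$, $S\in add\,\cT$. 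Lift $r$ to $\tilde r:S\to E$; then $e(t-\tilde rs)=0$, so $t-\tilde rs$ factors through $T_1\in add\,\cT$, whence $\bar t=\bar{\tilde r}\,\bar s=0$ in the quotient. This shows $\bar e$ is monic in $\cX/\cT$, so the weak kernel is a genuine kernel, with no need for right-minimality, the rectangle criterion, or a walk-by-walk analysis.
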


\begin{proof}
Since $\cX/\cT$ is isomorphic to its opposite category, we only need to show that morphisms have kernels. We prove something stronger, namely, any morphism $f:X\to Y$ in $\cC/\cT$ with $Y\in\cX$ has a kernel in $\cC/\cT$ which lies in $\cX/\cT$ if $X\in\cX$.

Let $p:T^Y\to Y$ be the minimal contravariant $add\,\cT$ approximation of $Y$ as given by Proposition \ref{T-approximation of objects of X} above. Then the fiber is also in $add\,\cT$ and we have a distinguished triangle $T_1\to T^Y\to Y\to T_1'$ in the category $\cX$. If $A$ is the fiber of the composition $X\to Y\to T_1'$ then we have another distinguished triangle
\[
	T_1\to A\xrarrow j X\to T_1'
\]
which lies entirely in $\cX$ if $X\in\cX$. It follows from the octagon axiom that $A$ is the pull-back in the diagram:
\[
\xymatrix{
T_1\ar[d]\ar[r] &
	A\ar[d]\ar[r] &
	X\ar[d]^f\ar[r]
	& T_1'\ar[d]\\
T_1 \ar[r]& 
	T^Y \ar[r]^p&
	Y\ar[r]&T_1'
	}
\]
Equivalently, we have another distinguished triangle in $\cC$:
\[
	Y'\to A\to X\oplus T^Y\xrarrow {(f,p)} Y
\]

We claim that $A$ is the kernel of $f$ in $\cC/\cT$. The composition $A\to X\to Y$ is zero since it factors through $T^Y$. Suppose that $g:Z\to X$ is an morphism in $\cC$ so that $f\circ g:Z\to Y$ factors through some object $T$ of $\cT$. Then it factor through a map $h:Z\to T^Y$. Then $fg-ph=0$ so we can lift to $A$:
\[
\xymatrix{
	0\ar[r]\ar[d] &Z\ar[r]^=\ar@{-->}[d]_t & Z\ar[d]_{\binom g{-h}}\ar[r] & 0\ar[d]\\
	Y^-\ar[r]&A\ar[r] & X\oplus T^Y\ar[r]^ {(f,p)} &Y
}
\]
We still need to show that the morphism $t:Z\to A$ constucted above is unique as a morphism in $\cC/\cT$. This is equivalent to showing that $j:A\to X$ is a monomorphism modulo $\cT$. To do this we will show that if $j\circ t:Z\to X$ factors through an object of $add\,\cT$ then $t$ factors through an object of $add\,\cT$.

First we note that for $Z=S\in\cT$, the argument above shows that the mapping $S\to X$ factors through $A$. Take any morphism $t:Z\to A$ so that the composition $Z\xrarrow t A\xrarrow j X$ factors through an object of $S\in add\,\cT$. So, $jt=rs: Z\xrarrow s S\xrarrow r X$. Since $S\in add\,\cT$, $r$ lifts to a map $\tilde r:S\to A$ and $jt=j\tilde rs:Z\to X$. So, $j(t-\tilde rs)=0$. But this implies that $t-\tilde rs:Z\to A$ factors through the fiber $T_1$ of $j$. Since $T_1\in add\,\cT$ this implies that $t=\tilde rs$ in $\cC/\cT$. Since $\tilde rs$ factors through $S\in add\,\cT$ this implies that $t=0$ as a morphism in $\cC/\cT$.
\end{proof}

\subsection{Infinite walks for objects of $\cC/\cT$}

Suppose now that $X=M(x,y)$ is an object of $\cC$ which does not lie in $\cX$. Then one or both of the coordinates $x,y$ are not of the form $a\pi/2^n$. Thus, there are three cases:
\begin{enumerate}
\item $x=a\pi/2^n$ but $y$ does not have this form.
\item $y=b\pi/2^n$ and $x$ is not of this form.
\item Neither $x$ nor $y$ have the form $a\pi/2^n$.
\end{enumerate}
Case 1. Let $b<y$ be maximal so that $Y=M(x,b)\in\cT$. Then $Y\in R_X:=(y-\pi,x]\times [b,y)$.

\begin{lem}
In Case 1, there is a unique infinite walk starting at $Y$ which is contained in $R_X$. This infinite walk is locally minimal in the sense that any finite segment is minimal. Furthermore this infinite walk contains all objects in $R_X\cap \cT$ and each step of the walk is vertically up or horizontal to the left with arrows pointing up and right.
\end{lem}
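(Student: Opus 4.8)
\emph{Proof plan.} The plan is to realise the required walk as an increasing union of the finite minimal walks furnished by Lemma~\ref{lem:properties of minimal walk}, and then to deduce uniqueness by observing that this union is an \emph{induced} half-line in the graph whose vertices are the objects of $\cT$ and whose edges are the irreducible basic morphisms.

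First I would record the geometry. Since $Y=M(x,b)\in\cT$ with $b<y$ chosen maximal, and since $|y-x|<\pi$, a direct computation shows that $R_X=(y-\pi,x]\times[b,y)$ is a small rectangle having $Y$ at its southeast corner and the point $c_0:=(y-\pi,y)$ as its (omitted) northwest corner, that $R_X$ contains a punctured neighbourhood of $c_0$ in $\cM$, and that the only point of $\overline{R_X}$ outside $\cM$ is $c_0$. In Case~1 the object $X=M(x,y)$ does not lie in $\cX$, so its support is infinite; thus there are infinitely many objects of $\cT$ in the open rectangle $(y-\pi,x)\times(x-\pi,y)$, and by the corner analysis already given these can accumulate only at $(y-\pi,y)$ or at $(x,x-\pi)$, the latter being excluded because $x$ is a dyadic multiple of $\pi$. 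Hence they accumulate only at $c_0$; since $\cT$ is discrete and $\overline{R_X}\setminus U$ is compact and contained in $\cM$ for every neighbourhood $U$ of $c_0$, all but finitely many of them lie in $R_X$. Therefore $R_X\cap\cT$ is infinite with unique accumulation point $c_0$.

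Next I would assemble the walk. Each $W\in R_X\cap\cT$ is (weakly) northwest of $Y$, so Lemma~\ref{lem:properties of minimal walk} gives a unique minimal walk $\gamma_W$ from $Y$ to $W$; it lies in the closed subrectangle $R_{Y,W}\subseteq R_X$ spanned by $Y$ and $W$, every step is vertically up or horizontally left with arrow pointing up and right, and it meets every object of $R_{Y,W}\cap\cT$. I would then invoke the elementary graph fact that a subsegment of a minimal walk is again minimal (a shortest path loses nothing on its subpaths; reduce any splice by deleting loops), so that, by uniqueness of minimal walks, $\gamma_{W'}$ is exactly the initial segment of $\gamma_W$ ending at $W'$ whenever $W'$ lies on $\gamma_W$. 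Choosing $W_1,W_2,\dots\in R_X\cap\cT$ with each $W_{j+1}$ strictly northwest of $W_j$ and with $W_j\to c_0$ (possible since $c_0$ is an accumulation point of $R_X\cap\cT$, so any $\cT$-object of $R_X$ near enough to $c_0$ is strictly northwest of a prescribed $W_j$), one has $W_j\in R_{Y,W_{j+1}}$ and hence $\gamma_{W_1}\subseteq\gamma_{W_2}\subseteq\cdots$ as nested initial segments. Set $\gamma_\infty:=\bigcup_j\gamma_{W_j}$. Then $\gamma_\infty$ is an infinite walk starting at $Y$, contained in $\bigcup_j R_{Y,W_j}\subseteq R_X$, with all steps vertically up or horizontally left and arrows up and right. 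It is locally minimal: any finite segment lies inside some $\gamma_{W_j}$, and subsegments of minimal walks are minimal. And it contains every $W\in R_X\cap\cT$: since $R_{Y,W_j}$ exhausts $R_X$ as $W_j\to c_0$, we have $W\in R_{Y,W_j}$, hence $W$ lies on $\gamma_{W_j}$, for $j$ large.

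Finally I would prove uniqueness. Because every finite segment of $\gamma_\infty$ is a minimal walk, two vertices of $\gamma_\infty$ at walk-distance at least $2$ are at graph-distance at least $2$, hence not adjacent; so, writing $\gamma_\infty$ as $v_0=Y,v_1,v_2,\dots$, it is an \emph{induced} half-line in the graph of $\cT$, with vertex set all of $R_X\cap\cT$. Let $C_0=Y,C_1,C_2,\dots$ be any infinite walk contained in $R_X$. By induction $C_i=v_i$: granting $C_j=v_j$ for $j\le i$, the object $C_{i+1}$ lies in $R_X\cap\cT$, hence is a vertex of $\gamma_\infty$, and it is adjacent to $v_i$; since $\gamma_\infty$ is induced, the only such vertices are $v_{i+1}$ and, when $i\ge1$, $v_{i-1}$, and $C_{i+1}\ne C_{i-1}=v_{i-1}$ because walks have distinct objects, so $C_{i+1}=v_{i+1}$. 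Thus $\gamma_\infty$ is the unique infinite walk in $R_X$ starting at $Y$, and it has all the asserted properties. The conceptual crux is the induced-path structure of $\gamma_\infty$ (via subsegment-minimality of walks); the one computation that requires genuine care is matching the rectangles $R_{Y,W}$, and their exhaustion of $R_X$, to the hypotheses and conclusions of Lemma~\ref{lem:properties of minimal walk}.
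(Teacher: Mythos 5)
Your proposal is correct and takes essentially the same route as the paper: both realize the infinite walk as the union of the nested finite minimal walks from $Y$ to the objects of $R_X\cap\cT$ supplied by Lemma~\ref{lem:properties of minimal walk}, with local minimality and the up/left step pattern inherited from that lemma. The only difference is one of explicitness: you spell out why $R_X\cap\cT$ is infinite and accumulates only at the corner $(y-\pi,y)$, and you give the uniqueness of the infinite walk a full argument (local minimality makes $\gamma_\infty$ an induced half-line, forcing any infinite walk in $R_X$ starting at $Y$ to coincide with it), points the paper leaves implicit.
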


\begin{proof}
By Lemma \ref{lem:properties of minimal walk}, every object in $R_X\cap \cT$ is connected by a unique minimal walk to $Y$ and this minimal walk lies in $R_X$. Furthermore, given any two such walks, one will be contained in the other. Therefore the union of all of these walks is an infinite locally minimal walk containing all objects of $R_X\cap\cT$. The other statement are local and follow from Lemma \ref{lem:properties of minimal walk}.
\end{proof}

Case 2. Let $a<x$ be maximal so that $Y=M(a,y)\in\cT$. Let $R_X:=[a,x)\times (x-\pi,y]$. Then the objects of $R_X\cap \cT$ are contained in an infinite walk which ends at the point $Y$.

Case 3. We take $R_X$ to be the open rectangle $R_X=:(y-\pi,x)\times (x-\pi,y)$. In this case there is a unique infinite walk containing objects arbitrarily close to the upper left corner and lower right corner of $R_X$. And this doubly infinite walk contains all objects in $R_X\cap\cT$.

In Case 3, the infinite walk associated to $X$ has no \emph{endpoints}. In Cases 1 and 2 the infinite walk has one endpoint called $Y$ in both cases. The walk associated to any $X\in\cX$ is finite with two endpoints except when $X\in\cT$ when the walk is empty. 

In all three cases, we let $A_i$ be the source points in the infinite walk and we let $B_i$ be the sink points. They should be indexed as before so that the irreducible map $A_i\to B_i$ is vertical pointing up and $A_i\to B_{i-1}$ is horizontal pointing to the right. Any endpoint is one of the $B_i's$.

\begin{prop}
The direct sum of the infinite sequences of objects $A_i$ and $B_i$ form a short exact sequence
\[
	0\to \bigoplus A_i\xrarrow j \bigoplus B_i\xrarrow p M(x,y)\to 0
\]
in the exact category $\cB$ and $T_0^X=\bigoplus B_i$ is a right $add\,\cT$ approximation for $X=M(x,y)$.
\end{prop}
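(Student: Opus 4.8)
The plan is to reduce everything to the finite-walk case of Proposition~\ref{T-approximation of objects of X} by exhausting the infinite walk with its finite segments. In Cases~1 and~2 let $W_n$ be the segment of the walk running from the endpoint $Y$ through the first $n$ sinks; in Case~3 let $W_n$ run through $n$ sinks on either side of a fixed base sink $B_0$. Each $W_n$ is a finite ``up-and-to-the-left staircase'' joining two of its sink vertices, so by the construction in the proof of Proposition~\ref{T-approximation of objects of X} --- equivalently, by splicing copies of the basic nonsplit sequence~(\ref{nonsplit sequence in B}) --- it yields a short exact sequence
\[
0\to\bigoplus_{A_i\in W_n}A_i\xrarrow{j_n}\bigoplus_{B_i\in W_n}B_i\xrarrow{p_n}X_n\to0
\]
in $\cB$, with $X_n$ the object at the far corner of the rectangle spanned by the two endpoint sinks of $W_n$: so $X_n=M(x,y(B_n))$ with $y(B_n)\uparrow y$ in Case~1, $X_n=M(x(B_{-n}),y)$ with $x(B_{-n})\uparrow x$ in Case~2, and $X_n=M(x(B_{-n}),y(B_n))$ in Case~3. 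The $X_n$ and the basic morphisms between them form a directed system with colimit $X=M(x,y)$, the $j_n$ and $p_n$ are compatible with the inclusions $W_n\subset W_{n+1}$, and the asserted sequence $0\to\bigoplus A_i\xrarrow j\bigoplus B_i\xrarrow p M(x,y)\to0$ is their colimit.

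To see the colimit is still short exact, apply each projection $\pi_k\colon\cB\to\cP$ ($k=1,2$) and check exactness of the resulting coordinate sequence. The decisive point is the local structure of the walk from Lemma~\ref{lem:properties of minimal walk} and the structural descriptions of the infinite walks in Cases~1--3: along the walk the $k$-th coordinate is strictly monotone and the walk vertices accumulate only at one corner of $R_X$, so at each real $t$ the portion of $\pi_k$ of the sequence visible at $t$ is, after relabeling, an elementary telescope --- a difference map among direct sums of the $P_{u_i}$ sending the $i$-th generator to (its image) minus (the image of the $(i-1)$-st). This is a split monomorphism whose cokernel assembles, over all $t$, to $\pi_k(M(x,y))$; this is exactly the computation in the proof of Proposition~\ref{T-approximation of objects of X}, now done level by level. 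Hence $\pi_k$ of the full sequence is exact, the sequence is exact in $\cB$, and $\bigoplus B_i$ is a model for $T_0^X$.

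For the approximation statement we rerun the straight-line argument of Proposition~\ref{T-approximation of objects of X}. Let $S\in\cT$ and let $f\colon S\to X$ be nonzero in $\cC$; by the Hom criterion recalled earlier, $S\cong M(a',b')$ with $y-\pi<a'\le x$ and $x-\pi<b'\le y$. Since an object of $\cT$ has both coordinates rational while $x,y$ are irrational, in Case~3 we get $a'<x$ and $b'<y$, so $S$ lies in the open rectangle $R_X$ and hence, by the Case~3 description, on the walk; then $S$ is some $A_i$ or $B_i$ and $f$ factors through the basic map $B_i\to X$ (or $B_{i-1}\to X$), i.e.\ through $p$. In Case~1, if $b'\ge b$ the same coordinates put $S$ in $R_X$ and we argue as before; if $b'<b$ then $f$ is a scalar multiple of the basic composite $S\to M(x,b)=Y\to X$, again factoring through $p$. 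Case~2 is symmetric, with the endpoint $Y=M(a,y)$. Therefore every morphism from an object of $add\,\cT$ to $X$ factors through $p\colon T_0^X\to X$, so $T_0^X$ is a right $add\,\cT$ approximation of $X$.

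The step I expect to be the main obstacle is not any individual calculation but the bookkeeping of the infinite direct sums: pinning down the precise sense in which $0\to\bigoplus A_i\to\bigoplus B_i\to M(x,y)\to0$ is ``exact in $\cB$'' (as the colimit of the finite sequences above, in the appropriate completion) and verifying that exactness survives the passage to the colimit --- the one delicate spot being the behaviour of the coordinate sequences at the accumulation corner(s) of $R_X$, where the pictures cease to be finite-dimensional and where the limiting coordinate object must be read correctly in order to coincide with $\pi_k(M(x,y))$. Reorganized level by level as the telescope of Proposition~\ref{T-approximation of objects of X}, this is routine. The one place that genuinely uses the hypotheses is the claim in Cases~1 and~2 that an $S\in\cT$ with a nonzero map to $X$ but lying outside $R_X$ must reach $X$ through the endpoint $Y$: this rests on the maximality of $b$ (resp.\ $a$) and on the inequality $x-\pi<b$ (resp.\ $y-\pi<a$), which holds because $M(x,b)$ (resp.\ $M(a,y)$) is an object of $\cC=\cC_\pi$.
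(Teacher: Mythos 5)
Your second half (the right $add\,\cT$ approximation) is essentially the paper's argument: every $S\in\cT$ with $\Hom_\cC(S,X)\neq0$ lies southwest of $X$, is either on the walk or reaches $X$ through an endpoint $B_i$, and the map factors through a single $B_i$; that part is fine. The problem is the first half. The objects of $\cB$ are by definition \emph{finite} formal sums of indecomposables, so neither $\bigoplus A_i$ nor $\bigoplus B_i$ is an object of $\cB$, and ``short exact sequence in $\cB$'' (or ``split exact in each coordinate'' after applying $\pi_1,\pi_2$) has no literal meaning for this sequence. The paper's proof is devoted almost entirely to supplying that meaning: $p$ is ``onto'' in the sense that a morphism out of $X$ vanishing on every $B_i$ is zero; the approximation statement means every map from an object of $\cT$ factors through a finite subsum $\bigoplus_{|i|\le n}B_i$; and $\bigoplus A_i$ is the ``kernel'' in the sense that every $f\colon Z\to\bigoplus_{n\le i\le m}B_i$ with $p\circ f=0$ lifts uniquely to a finite subsum of the $A_i$, which is then verified by the explicit computation that the system $a_i-a_{i+1}=b_i$ with $\sum b_i=0$ and almost all $a_i=0$ has a unique solution. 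Your proposal defers exactly this to ``the appropriate completion'' and declares it routine, which is where the actual content of the proposition lives.

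Concretely, two steps in your argument would not go through as written. First, ``the asserted sequence is the colimit of the finite sequences'' presupposes a category in which that colimit exists and an exact structure on it; you never construct it, so the subsequent claim that you can ``apply $\pi_k$ and check exactness of the resulting coordinate sequence'' has no object to apply $\pi_k$ to. Second, even granting some completion, exactness of a filtered colimit of (coordinatewise split) exact sequences is asserted, not proved; the delicate behaviour at the accumulation corner(s) of $R_X$, which you correctly identify as the danger point, is precisely where your telescope description stops being a finite computation, and it is exactly what the paper's elementary lifting argument handles. Your exhaustion-by-finite-walks strategy can be made to work --- for instance, one can check that a map $Z\to\bigoplus_{n\le i\le m}B_i$ killed by $p$ is already killed by $p_N$ for large $N$, because basic morphisms compose to basic (nonzero) morphisms, and then invoke Proposition \ref{T-approximation of objects of X} --- but as it stands the kernel/exactness statement is the missing piece, not a routine one.
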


\begin{proof}
We need to interpret this rigorously before we can prove it or use it correctly since the objects of $\cB$ are by definition only finitely generated.

$p:\bigoplus B_i\to X=M(x,y)$ being onto means that any $f:X\to Y$ which is zero on each $B_i$ must be zero. This is true since, in $\cB$, the composition of basic morphisms is basic and therefore nonzero.

When we say that the infinite sum $T_0^X=\bigoplus B_i$ is a right $add\,\cT$ approximation of $X$ we mean that any morphism of any object of $\cT$ into $X$ factors through one of the finite sums $\bigoplus_{|i|\le n}B_i$. This holds because any object $S\in \tilde\cT$ for which $\Hom_\cB(S,X)\neq0$ lies to the southwest of $X$ and the straight line from $S$ to $X$ crosses the infinite zig-zag in $R_X$ representing the infinite walk. Sliding forward along the arrow, or one of the arrows, at this intersection point we get to a point $B_i$ in $T_0^X$ and any morphism $S\to X$ factors through this $B_i$.

When we say that the infinite sum $\bigoplus A_i$ is the kernel of the map $p:\bigoplus B_i\to X$ we mean that, for any morphism $f: Z\to \bigoplus_{n\le i\le m}B_i$ so that the composition $p\circ f:Z\to X$ is zero, there is a unique morphism $g:Z\to \bigoplus_{n<i\le m}A_i$ so that $f=j\circ g$. This is an easy calculation. The morphism $f$ is the sum of its components $f_i:Z\to B_i$ which are scalar multiples, say $a_i$, times the basic morphism. The condition that $p\circ f=0$ is equivalent to saying that $\sum b_i=0$. If $n$ is minimal so that $b_n\neq0$ and $m$ is maximal so that $b_m=0$ then the object $Z$ must be southwest of both $B_n$ and $B_m$ which implies that it is southwest of $A_{n+1},\cdots,A_m$. The components $g_i:Z\to A_i$ of the lifting $g:Z\to \bigoplus_{n<i\le m}A_i$ are scalar multiples, say $a_i$, of the basic morphism $Z\to A_i$ and these scalars must satisfy the condition that $a_i-a_{i+1}=b_i$ and almost all $a_i$ are zero. It is easy to see that there is a unique solution of this system of equations.
\end{proof}

Since any morphism $S\to X$ in $\cC$ comes from a morphism in $\cB$ we get the following immediate corollary.

\begin{cor}
$T_0^X\to X$ is a right $add\,\cT$ approximation of $X$ in $\cC$.
\end{cor}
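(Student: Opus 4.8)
The plan is to transport, along the quotient functor $q\colon\cB\to\cC$, the approximation property already established in $\cB$ by the preceding Proposition. There we obtained, in the exact category $\cB$, a short exact sequence $0\to\bigoplus A_i\to\bigoplus B_i\xrarrow p M(x,y)\to0$ in which $p$ is a right $add\,\tilde\cT$ approximation; concretely, every morphism from an object of $\tilde\cT$ into $\tilde X=(x,y)$ factors through a finite subsum $\bigoplus_{|i|\le n}B_i$. Applying $q$ carries $T_0^X=\bigoplus B_i$ and $p$ into $\cC$, where (as in the Proposition) the infinite sum is read as the formal direct limit of its finite subsums, each of which lies in $add\,\cT$.

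First I would fix a nonzero morphism $f\colon S\to X$ in $\cC$ with $S\in\cT$ indecomposable. By the Lemma describing $\Hom_\cC$ and $\Hom_{\cC/\cT}$ in Section~2, the condition $\Hom_\cC(S,X)\neq0$ forces $S$ to lie southwest of $X$, and this Hom space is one-dimensional, spanned by the image under $q$ of the basic morphism $\f\colon\tilde S\to\tilde X$ of $\cB$, where $\tilde S$ is the representative of $S$ in the strip $\cU$. Thus $f=\lambda\,q(\f)$ for some $\lambda\in K$, and since $\f$ factors through $\bigoplus_{|i|\le n}B_i$ in $\cB$ for some $n$, applying $q$ shows that $f$ factors through $\bigoplus_{|i|\le n}B_i\subseteq T_0^X$ in $\cC$. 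An arbitrary morphism from an object of $add\,\cT$ into $X$ is a finite sum of morphisms of this form, hence also factors through a finite subsum of $T_0^X$; this is precisely the assertion that $T_0^X\to X$ is a right $add\,\cT$ approximation of $X$ in $\cC$.

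The only step requiring care is the claim that a nonzero morphism $S\to X$ in $\cC$ is, up to scalar, the image of a basic morphism of $\cB$; that is, that passing from $\cB$ through $\cB/\cB_\pi$, its doubling, and the orbit by $F_\pi$ introduces no new morphisms between these particular objects. I would deduce this from the one-dimensionality of $\Hom_\cB$ built into the construction of $\cB$ in Section~1, together with the fact that morphisms in a quotient or orbit category are represented by morphisms downstairs. Once that is in hand there is nothing further to check beyond the $\cB$-level statement, so the corollary follows at once.
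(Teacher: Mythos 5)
Your proposal is correct and follows the same route as the paper, whose entire proof is the observation that any morphism $S\to X$ in $\cC$ comes from a morphism in $\cB$, so the approximation property established in the preceding proposition transports directly. Your extra care about one-dimensionality of $\Hom_\cC(S,X)$ and representability of morphisms in the quotient/orbit category is just a spelled-out version of that same observation.
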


For any object $X\in\cC$ which is not in our cluster $\cT$, the rectangle $R_X$ is divided into two parts by the walk associated to $X$. These regions are the \emph{upper (right) part} $U_X$ which contains the object $X$ but does not contain any points in the continuous path representing the walk associated to $X$ and the \emph{lower (left) part} which by definition contains this walk. We have the following trivial observation.

\begin{lem}
An indecomposable object $Y\in C$ lies in $U_X$ if and only if $\Hom_{\cC/\cT}(Y,X)\neq0$.
\end{lem}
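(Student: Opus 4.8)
The plan is to unwind the definitions so that ``$Y \in U_X$'' and ``$\Hom_{\cC/\cT}(Y,X) \neq 0$'' both become the same concrete geometric condition. First I would fix coordinates $X = M(x,y)$ with associated rectangle $R_X$ and associated walk $W$, whose source points are the $A_i$ and sink points the $B_i$, and recall that $R_X \setminus W = U_X \disj (\text{lower part})$, where $U_X$ is the component containing $X$. The key point from the previous propositions is that $W$ contains every object of $R_X \cap \cT$. So I would argue: a point $Y \in R_X$ lies in $U_X$ precisely when the straight line segment from $Y$ to $X$ does not cross the zig-zag path $W$, which (since all of $R_X \cap \cT$ sits on $W$, and $X$ together with the open region $U_X$ is on the ``upper right'' side) is equivalent to the closed rectangle $[\,\cdot\,] = [$ lower-left corner of $Y$ to $X$ $]$ meeting $\cT$ only possibly at $Y$ itself.

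Next I would translate the Hom condition. By the $\Hom$-description lemma (the one characterizing $\Hom_{\cC/\cT}(X',Y') \neq 0$ via a rectangle with no cluster points), $\Hom_{\cC/\cT}(Y,X) \neq 0$ iff $Y \cong M(a,b)$ with $y - \pi < a \le x$, $x - \pi < b \le y$ (so $Y$ is ``southwest'' of $X$ inside the strip, i.e. $Y \in R_X$ in the relevant sense) and the closed rectangle $R = [a,x] \times [b,y]$ contains no point of $\cT$ in its interior-or-at-least no point other than $Y$. This is exactly the condition produced in the previous paragraph. So the two sides of the asserted equivalence are literally the same statement once both are spelled out; the ``trivial observation'' is precisely that these two unwindings coincide.

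The main thing to be careful about — and what I'd expect to be the only real content — is the boundary bookkeeping: whether $Y$ itself is allowed to sit on the walk $W$, and what happens along the edges of $R_X$, especially near the two corners of the Moebius band where $\cT$ accumulates (the subtlety already flagged in the proof of infinitesimal AR-duality, where boundary points of a rectangle fail to be in the $\cC/\cT$-support because a morphism factors through nearby cluster points). Concretely, if $Y$ lies on $W$ then $Y$ is one of the $A_i$ or $B_i$, hence in $add\,\cT$, hence $\Hom_{\cC/\cT}(Y,X) = 0$, consistent with $Y \notin U_X$; and if $Y$ is strictly below $W$ then any morphism $Y \to X$ factors through the approximation $T_0^X = \bigoplus B_i$ (by the approximation property just proved) and hence vanishes in $\cC/\cT$. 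The remaining case $Y \in U_X$ gives a basic morphism $Y \to X$ in $\cB$ not factoring through any $B_i$, hence not through $\cT$, hence nonzero in $\cC/\cT$.

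So the steps, in order, are: (1) put $X$ in coordinates and recall $R_X$, $W$, $U_X$, and that $W \supseteq R_X \cap \cT$; (2) if $Y \notin R_X$ (so $\Hom_\cC(Y,X) = 0$ already), both sides are trivially $0$/false; (3) if $Y \in R_X$ lies on or below $W$, show any map $Y \to X$ factors through $add\,\cT$ via $T_0^X$, so $\Hom_{\cC/\cT}(Y,X) = 0$, and note $Y \notin U_X$; (4) if $Y \in U_X$, the basic morphism $Y \to X$ in $\cB$ does not factor through any $B_i$ — since every $B_i$ lies on $W$ which separates $Y$ from $X$ — so it represents a nonzero class in $\cC/\cT$. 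Apply the $\Hom$-lemma to confirm nonvanishing. This is all routine given the structural results already established, so there is no serious obstacle; the only place to watch is the edge/corner behavior in step (3).
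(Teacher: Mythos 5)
The paper gives no proof of this lemma at all --- it is introduced as a ``trivial observation'' --- so the only question is whether your unwinding is sound, and in substance it is: both sides do reduce, via the earlier Hom-characterization lemma (nonvanishing of $\Hom_{\cC/\cT}(Y,X)$ iff the closed rectangle from $Y$ to $X$ misses $\cT$) together with the facts that the walk contains every point of $R_X\cap\cT$ and is a monotone staircase separating $R_X$, to the same geometric condition. This is exactly the argument implicit in the surrounding text (it is the straight-line--crossing argument from the proof of the $add\,\cT$-approximation proposition).

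Three points of the write-up need repair, none fatal. First, step (2) is literally false: in Case 3, with $x$ irrational, take $Y=M(x,b)$ with $x-\pi<b<y$; then $Y\notin R_X$ (the rectangle is open on that side), yet the closed rectangle is the segment $\{x\}\times[b,y]$, which contains no cluster point since all objects of $\cT$ have dyadic coordinates, so $\Hom_{\cC/\cT}(Y,X)\neq0$. This is the same edge ambiguity present in the lemma's own phrasing ($U_X$ must be read as including the outer edges of $R_X$ through $X$ that miss $\cT$); you correctly flag the edge behavior as the only real content but then dispose of it incorrectly in step (2). Second, in step (3) the citation ``by the approximation property just proved'' is the wrong justification, since that property concerns maps from $add\,\cT$; the correct justification --- which you in effect use in step (4) --- is the rectangle criterion itself: for $Y$ on or below the walk, monotonicity of the staircase puts some walk point $B_i$ (a cluster object) weakly northeast of $Y$, hence inside $[a,x]\times[b,y]$, so the basic morphism factors through $\cT$ and dies in $\cC/\cT$; note also that a point of the continuous path $W$ need not be an $A_i$ or $B_i$ (it may lie on a segment), and such points are handled by this factorization argument, not by ``$Y\in add\,\cT$.'' Third, your paraphrase of the Hom-lemma (``no point of $\cT$ other than $Y$'') should be ``no point of $\cT$ at all''; this happens to be harmless here because cluster objects never lie in $U_X$ and are zero in $\cC/\cT$, but as stated it is not the criterion the paper proves.
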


\begin{prop} A morphism $f:X\to Y$ in $\cC/\cT$ is zero in $\cC/\cT$ if and only if the induced map
\[
	\Hom_{\cC/\cT}(\t^{-1}S,f):\Hom_{\cC/\cT}(\t^{-1}S,X)\to \Hom_{\cC/\cT}(\t^{-1}S,Y)
\]
is equal to zero for all $S\in \cT$.
\end{prop}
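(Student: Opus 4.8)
The plan is to prove both implications via the structure provided by the infinitesimal radical $\rt S$ and the infinite-walk approximations. The ``only if'' direction is formal: if $f = 0$ in $\cC/\cT$ then every induced map on $\Hom$-groups is zero, in particular the maps $\Hom_{\cC/\cT}(\t^{-1}S,f)$. So the content is the ``if'' direction. Assume $\Hom_{\cC/\cT}(\t^{-1}S,f) = 0$ for all $S \in \cT$; I want to conclude $f = 0$ in $\cC/\cT$, i.e.\ that $f\colon X \to Y$ factors through an object of $\mathrm{add}\,\cT$ in $\cC$.

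First I would reduce to the case where $X$ is indecomposable and lies in $\cP_z$ for some end $z$, or more usefully reduce to understanding how $f$ interacts with a right $\mathrm{add}\,\cT$ approximation of $Y$. Let $p\colon T_0^Y \to Y$ be the (possibly infinite) right $\mathrm{add}\,\cT$ approximation of $Y$ constructed in the preceding proposition, with $T_0^Y = \bigoplus B_i$ running over the sink points of the walk associated to $Y$. The key observation is that for $S \in \cT$, an element of $\Hom_{\cC/\cT}(\t^{-1}S, Y)$ detects exactly whether the open neighborhood of $S$ maps nontrivially to $Y$, which by the proposition identifying $U_X$ (applied to $Y$) means $S$ lies in the upper part $U_Y$. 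The hypothesis that $\Hom_{\cC/\cT}(\t^{-1}S,f) = 0$ for all $S$ then says: for every $S \in \cT$ with $S \in U_X$ (so that a basic morphism $\t^{-1}S \to X$ exists mod $\cT$), the composite $\t^{-1}S \to X \xrightarrow{f} Y$ is zero in $\cC/\cT$. In other words, $f$ kills every morphism coming infinitesimally from a cluster object, hence $f \circ (\text{approximation of }X)$ lands in $\mathrm{add}\,\cT$ after composing down.

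The main step is then to convert ``$f$ kills all the infinitesimal generators $\t^{-1}S$'' into ``$f$ itself factors through $\mathrm{add}\,\cT$.'' For this I would work coordinate-wise using Proposition \ref{two cases for maps in P_z}: fix an end $z$ of $X$ and restrict attention to the linear subcategory $\cP_z$. Any indecomposable summand $X' = M(z,y)$ of $X$ admits, for small $\e$, a basic morphism $S_\e \to X'$ for the cluster object $S = M(z - \e', y - \e'')$ just southwest of it; as $\e \to 0$ these organize into a morphism $\t^{-1}S \to X'$ in the limit, provided $S$ actually lies in $\cT$ — and if no such $S$ is close to $X'$ then $X'$ is ``far'' from the walk in a way that forces the analysis through $T_0^X$ directly. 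The vanishing hypothesis forces the scalar attached to $f \circ (\t^{-1}S \to X')$ to be zero, which pins down the scalars of $f$ on each summand. Running this over all ends and all nearby cluster objects, and using that a morphism in $\cC/\cT$ is zero iff each of its scalar components factors through $\cT$ (equivalently, passes through the rectangle condition of the $\Hom$-lemma), I conclude $f = 0$ in $\cC/\cT$. Concretely, one checks that the support of $f$ (the set of $S$ where $f$ is ``visible'' on infinitesimal generators) is forced to be empty, and by the corollary that an object of $\cC/\cT$ with empty support is zero — applied here to the image or to a suitable cone of $f$ — one gets $f = 0$.

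The hardest part will be the bookkeeping in the infinite/irrational case: when $Y \notin \cX$ the approximation $T_0^Y = \bigoplus B_i$ is genuinely infinite, and one must be careful that a morphism $Z \to Y$ factoring through $\mathrm{add}\,\cT$ means it factors through a \emph{finite} subsum $\bigoplus_{|i| \le n} B_i$, as spelled out in the preceding proposition's proof. So the argument that ``$f$ kills every $\t^{-1}S$'' $\Rightarrow$ ``$f$ factors through finitely many $B_i$'' needs the local finiteness of the walk (only finitely many walk-steps meet any compact region) together with discreteness of $\cT$; this is where I expect to spend the most care. Everything else — the coordinate reduction via $\cP_z$, the scalar computation, and the translation into the support language — is routine given the lemmas already established.
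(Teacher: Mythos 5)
Your ``only if'' direction is fine, but the ``if'' direction has a genuine gap, and it sits exactly where the real content of the statement lies. Your forward strategy (``$f$ kills every $\t^{-1}S$, hence $f$ factors through $\mathrm{add}\,\cT$'') never actually produces the factorization: for indecomposable $X,Y$ a nonzero morphism in $\cC$ is a single scalar times the basic morphism, so ``pinning down the scalars of $f$ on each summand'' is empty — the whole question is whether that one basic morphism factors through $\cT$, i.e.\ whether the rectangle $[a,x]\times[b,y]$ of the Hom-lemma meets $\cT$, and your $\cP_z$/scalar analysis does not engage with that. Worse, your closing step is invalid or circular: the corollary ``empty support $\Rightarrow$ zero'' is a statement about \emph{objects}, not morphisms; the ``image'' of $f$ is not yet available, since this proposition is an ingredient in establishing kernels in $\cC/\cT$ and the abelian structure (so invoking images here is circular); and the cone of $f$ lying in $\mathrm{add}\,\cT$ would show $f$ is an \emph{isomorphism} in $\cC/\cT$, not that $f=0$, so ``a suitable cone'' cannot substitute.

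What is actually needed (and what the paper does) is the contrapositive with a concrete witness: reduce to $X,Y$ indecomposable, assume $f\neq0$ in $\cC/\cT$, and use the lemma on $U_Y$ to conclude $X\in U_Y$; then the interior of the rectangle $R_X$ must meet the walk associated to $Y$ in at least one \emph{source} point $S=A_i$ of that walk. Such an $S$ lies in the support of both $X$ and $Y$, and one checks that the composite $\t^{-1}S\to X\xrightarrow{f}Y$ does not factor through $\cT$ (the relevant rectangles for $S_\e\to X$ and $S_\e\to Y$ avoid $\cT$ precisely because $S$ is a source of the walk inside $R_X\cap U_Y$), so $\Hom_{\cC/\cT}(\t^{-1}S,f)\neq0$. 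Your proposal gestures at the right objects ($U_Y$, the walks, $T_0^Y=\bigoplus B_i$) but never exhibits this $S$ nor verifies the nonvanishing of the composite modulo $\cT$; the worry you flag about infinitely many $B_i$ is not where the difficulty is — the difficulty is the existence of a detecting source point, which is a finite, local statement. To repair the proof, replace the forward argument by this contrapositive construction.
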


\begin{proof}
Since a morphism is zero if and only if it is zero on each component, we may assume that $X,Y$ are indecomposable. Clearly $f=0$ implies $\Hom_{\cC/\cT}(\t^{-1}S,f)=0$. So, suppose that $f\neq0$ in $\cC/\cT$. By the lemma above, this implies that $X$ lies in $U_Y$. So the interior of the rectangle $R_X$ of $X$ meets the walk for $Y$ in at least one source point $S=A_i$. Then $S$ lies in the support of both $X$ and $Y$ and $\Hom_{\cC/\cT}(\t^{-1}S,f)\neq0$.
\end{proof}

\begin{cor}\label{cor: f is mono if it is mono on supports}
A morphism $f:X\to Y$ in $\cC/\cT$ is a monomorphism in $\cC/\cT$ if and only if the induced map $\Hom_{\cC/\cT}(\t^{-1}S,f)$ is a monomorphism for all $S\in \cT$.
\end{cor}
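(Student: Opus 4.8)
The plan is to derive this as a formal consequence of the Proposition just proved, which is precisely the assertion that the family of additive functors $\Hom_{\cC/\cT}(\t^{-1}S,-)$, $S\in\cT$, is \emph{jointly faithful} on $\cC/\cT$. Throughout I use that, by definition, $\Hom_{\cC/\cT}(\t^{-1}S,-)$ is the functor $X\mapsto\lim_{\e\to0+}\Hom_{\cC/\cT}(S_\e,X)$, and that this is a genuine inverse limit: for $0<\e'\le\e$ the transition map is precomposition with the basic morphism $S_{\e'}\to S_\e$, and the small values of $\e$ are cofinal. Consequently, for a morphism $f\colon X\to Y$ the map $\Hom_{\cC/\cT}(\t^{-1}S,f)$ is the map on inverse limits induced by the post-composition maps $f_*\colon\Hom_{\cC/\cT}(S_\e,X)\to\Hom_{\cC/\cT}(S_\e,Y)$.

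First I would prove that a monomorphism remains a monomorphism on supports. Assume $f$ is monic in $\cC/\cT$. For each $S\in\cT$ and each $\e$ with $0<\e<\tfrac12\mesh S$ the object $S_\e$ is a genuine object of $\cC/\cT$, so, $f$ being left cancellable, the map $f_*\colon\Hom_{\cC/\cT}(S_\e,X)\to\Hom_{\cC/\cT}(S_\e,Y)$ is injective. Associativity of composition shows that these maps $f_*$ constitute a morphism of inverse systems from $(\Hom_{\cC/\cT}(S_\e,X))_\e$ to $(\Hom_{\cC/\cT}(S_\e,Y))_\e$, and an inverse limit of a levelwise injective morphism of inverse systems is injective, since $\lim$ is left exact (the limit sits inside the product of the terms, and a product of injections is injective). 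Hence $\Hom_{\cC/\cT}(\t^{-1}S,f)=\lim_{\e\to0+}f_*$ is injective for every $S\in\cT$.

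For the converse, assume $\Hom_{\cC/\cT}(\t^{-1}S,f)$ is injective for all $S\in\cT$, and let $g\colon Z\to X$ be any morphism of $\cC/\cT$ with $f\circ g=0$. Functoriality of $\Hom_{\cC/\cT}(\t^{-1}S,-)$ gives $\Hom_{\cC/\cT}(\t^{-1}S,f)\circ\Hom_{\cC/\cT}(\t^{-1}S,g)=0$; since $\Hom_{\cC/\cT}(\t^{-1}S,f)$ is injective, $\Hom_{\cC/\cT}(\t^{-1}S,g)=0$ for every $S$. By the preceding Proposition this forces $g=0$, and as $Z$ and $g$ were arbitrary, $f$ is a monomorphism.

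I do not expect a genuine obstacle here: the statement is a formal upgrade of the previous Proposition. The only point that deserves attention is that $\Hom_{\cC/\cT}(\t^{-1}S,-)$ is an inverse-limit functor rather than a representable one, so that ``preservation of monomorphisms'' becomes the left-exactness of $\lim$, while ``reflection of monomorphisms'' relies on the \emph{joint} faithfulness established in the Proposition rather than on faithfulness of any single representable functor.
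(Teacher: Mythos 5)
Your proof is correct and takes essentially the same route as the paper: the forward direction uses that $S_\e\notin\cT$ for small $\e>0$, so left-cancellability of $f$ gives injectivity of $f_*$ on each $\Hom_{\cC/\cT}(S_\e,-)$ and hence on the limit, and the converse applies the preceding Proposition (joint faithfulness on supports) to a morphism $g$ with $f\circ g=0$. One minor remark: since the transition maps go $\Hom_{\cC/\cT}(S_\e,X)\to\Hom_{\cC/\cT}(S_{\e'},X)$ for $\e'\le\e$, the limit as $\e\to0+$ is most naturally a filtered colimit rather than an inverse limit, but the fact you need --- that a levelwise injection induces an injection on the limit --- holds in either reading (and here the terms stabilize for small $\e$), so the argument is unaffected.
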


\begin{proof} If $f:X\to Y$ is a monomorphism in $\cC/\cT$ then the induced map
$
	\Hom_{\cC/\cT}(W,X)\to \Hom_{\cC/\cT}(W,Y)
$
is a monomorphism for all indecomposable $W$ not in $\cT$. This is also true for $W=\t^{-1}S$ since $S_\e$ is not in $\cT$ for all sufficient small $\e>0$. So, the first statement implies the second.

Conversely, suppose that $f$ is not a monomorphism. Then there exists a nonzero morphism $g:Z\to X$ so that $f\circ g:Z\to Y$ is trivial in $\cC/\cT$. By the above proposition, this implies that $f\circ g$ is trivial on supports but $g$ is not. Therefore $f$ is not a monomorphism on supports. I.e., $\Hom_{\cC/\cT}(\t^{-1}S,f)$ is not a monomorphism.
\end{proof}

\subsection{Kernels in $\cC/\cT$}

\begin{prop}\label{morphisms in C/T have kernels}
Any morphism $f:X\to Y$ in $\cC/\cT$ has a kernel and cokernel. 
\end{prop}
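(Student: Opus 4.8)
The plan is to reduce, in two steps, to a situation already settled by Proposition~\ref{X/T has kernels}, and then to bridge the remaining gap with the infinite‑walk machinery of the previous subsection. First, since $\cC/\cT$ is strictly isomorphic to its own opposite category, cokernels in $\cC/\cT$ correspond to kernels in $(\cC/\cT)^{op}$, so it suffices to prove that every $f\colon X\to Y$ admits a kernel. Second, I would reduce to the case $Y$ indecomposable by the elementary additive‑category fact that, if $i_1\colon K_1\to X$ is a kernel of the first component $f_1$ of $f=(f_1,f_2)\colon X\to Y_1\oplus Y_2$ and $i_2\colon K_2\to K_1$ is a kernel of $f_2\circ i_1$, then $i_1 i_2$ is a kernel of $f$; iterating over the indecomposable summands of $Y$ then reduces everything to $Y$ indecomposable, provided the intermediate kernels are genuine objects of $\cC/\cT$, which is part of what we prove. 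With $Y$ indecomposable I would then induct on the number $k\in\{0,1,2\}$ of irrational ends of $Y$; the case $k=0$ is exactly Proposition~\ref{X/T has kernels}, which already produces a kernel of $f$ in $\cC/\cT$ for an arbitrary source $X$.

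For the inductive step ($k\geq 1$) I would run the pull‑back construction of Proposition~\ref{X/T has kernels} verbatim, with the finite walk of $Y$ replaced by its infinite walk: let $p\colon T_0^Y\to Y$ be the right $add\,\cT$ approximation coming from the short exact sequence $0\to\bigoplus A_i\to\bigoplus B_i\to Y\to 0$ of the previous subsection, with fibre $T_1=\bigoplus A_i$, and form the pull‑back $A$ of $X\xrightarrow{f}Y\xleftarrow{p}T_0^Y$, sitting in a distinguished triangle $T_1\to A\xrightarrow{\bar\jmath}X\to TT_1$. Then $A$ should be the kernel of $f$ in $\cC/\cT$, for the same three reasons as in Proposition~\ref{X/T has kernels}: the composite $A\to X\xrightarrow{f}Y$ factors through $T_0^Y\in add\,\cT$ and so is zero in $\cC/\cT$; any $g\colon Z\to X$ with $f g$ factoring through an object of $\cT$ lifts through $A$; and $\bar\jmath$ is a monomorphism modulo $\cT$ because its fibre $T_1$ lies componentwise in $add\,\cT$. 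Moreover the connecting map $X\to TT_1=\bigoplus TA_i$ out of the finite object $X$ has only finitely many nonzero components, so all but finitely many summands $A_i$ split off of $A$ and vanish in $\cC/\cT$; hence $A$ is isomorphic in $\cC/\cT$ to a genuine finite object of $\cC$, namely the fibre of the induced map $X\to\bigoplus_{|i|\le N}TA_i$ for $N$ large.

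The part I expect to be the main obstacle is making these manipulations with the infinite sums $T_0^Y$ and $T_1$ rigorous and checking that the construction really computes a kernel in the \emph{quotient} rather than merely a weak kernel. Concretely, in each of the three verifications above one must replace the infinite walk of $Y$ by a finite initial segment $\bigoplus_{|i|\le N}B_i$ large enough to meet the (possibly infinite, but walk‑shaped) support of $X$, and then argue that the outcome stabilizes once $N$ is large; this is where the inductive hypothesis on the number of irrational ends of $Y$ does its work, since the truncated walk has its remaining endpoints in $\cT$. The cleanest tool for the independence‑of‑$N$ and the universality statements is the support/zero criterion proved just before Corollary~\ref{cor: f is mono if it is mono on supports}: a morphism is zero, respectively a monomorphism, in $\cC/\cT$ precisely when it is so on every $\Hom_{\cC/\cT}(\t^{-1}S,-)$, and each of these spaces is at most one‑dimensional, so the candidate kernel is pinned down by its behaviour on supports and the universal property can be verified there. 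Finally, because $\cC/\cT\cong(\cC/\cT)^{op}$, cokernels exist as well, which finishes the proof.
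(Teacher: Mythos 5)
Your overall frame (reduce to kernels via $\cC/\cT\cong(\cC/\cT)^{op}$, induct on the number of irrational ends of $Y$, base case from the stronger form of Proposition \ref{X/T has kernels} which handles any source $X$ with target in $\cX$) agrees with the paper, and the preliminary reduction to $Y$ indecomposable is harmless. The genuine gap is in your inductive step. The infinite sums $T_0^Y=\bigoplus B_i$ and $T_1=\bigoplus A_i$ are not objects of $\cC$ (objects are by definition finite sums of indecomposables), so the triangle $T_1\to A\to X\to TT_1$ and the pull-back $A$ along $p$ are not available; the paper deliberately uses the infinite walk only through finite truncations and factorization statements. Your proposed remedy fails at exactly the delicate point: the components $Y\to TA_i$ of the connecting map are nonzero for \emph{all} $i$, and the composite $X\to Y\to TA_i$ stays nonzero for infinitely many $i$ whenever some component of $X$ shares the irrational end $z$ with $Y$ --- for instance $f=\mathrm{id}_Y$, or any $X$ with a summand $M(z,x_0)$. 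In those cases cofinitely many $A_i$ do \emph{not} split off, $A$ is not isomorphic in $\cC/\cT$ to a finite object by your argument, and the truncation $\bigoplus_{|i|\le N}B_i$ is not an $add\,\cT$ approximation of $Y$ (which maps from $\cT$ factor through it depends on the source), nor does truncating give a target with fewer irrational ends; so the inductive hypothesis is never actually invoked in a way that closes the argument.

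The missing idea, which is how the paper runs the induction, is to \emph{perturb the irrational end} rather than truncate the walk: write $Y=Y(z)\oplus Y_2$ and $X=X(z)\oplus X_2$ according to the irrational end $z$, use Proposition \ref{two cases for maps in P_z} to normalize $f$ on the linear category $\cP_z$ (either the relevant component $f_0$ is zero, or after an automorphism it is nonzero on a single summand $X_0(z)$), and choose a rational $z-\e$ so close to $z$ that $[z-\e,z+\e]\times C$ misses $\cT$ and the basic map $h:Y_0(z-\e)\to Y_0(z)$ is a monomorphism in $\cC/\cT$ (checked on supports via Corollary \ref{cor: f is mono if it is mono on supports}). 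In the first case $f$ lifts through $\tilde Y=Y_0(z-\e)\oplus Y_1(z)\oplus Y_2$, and in the second case one forms a Cartesian square in $\cC$ replacing both $X_0(z)$ and $Y_0(z)$ by their $(z-\e)$-versions; in either case the new morphism has strictly fewer irrational ends in its target, its kernel exists by induction, and a short diagram chase (using that $h$, resp.\ $j$, is a monomorphism modulo $\cT$ and that the square is Cartesian) shows this kernel is also the kernel of $f$. Without some such mechanism for handling sources sharing an irrational end with $Y$, your construction only produces a weak kernel candidate living outside the category.
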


\begin{proof}
Since $\cC/\cT$ is isomorphic to its opposite category we only need to show that $f:X\to Y$ has a kernel. The proof will be by induction on the number of irrational ends of $Y$. If $Y$ has no irrational ends then $Y\in\cX/\cT$ and $f:X\to Y$ has a kernel by Proposition \ref{X/T has kernels}. So, suppose that $Y$ has at least one irrational end $z$. 

Let $Y=Y(z)\oplus Y_2$ where $Y(z)\cong\bigoplus M(z,y_j)$ is the sum of all components of $Y$ with one end at $z$. Similarly, we have $X=X(z)\oplus X_2$ where $X(z)\cong\bigoplus M(z,x_i)$. Then $Y(z)$ is nonzero. So it has at least one summand: $Y(z)=Y_0(z)\oplus Y_1(z)$ where $Y_0(z)=M(z,y_0)$. Let $f_0:X(z)\to Y_0(z)$ be the restriction to $X(z)$ of the projection of $f$ to $Y_0$. Then $f_0$ is a morphism in the category $\cP_z$. So, by Proposition \ref{two cases for maps in P_z} there are two possibilities: 
\begin{enumerate}
\item $f_0=0$ or 
\item There is a decomposition $X(z)=X_0(z)\oplus X_1(z)$ where $X_0=M(z,x_0)$ and an automorphism $g$ of $X(z)$ so that $f_0\circ g$ is zero on $X_1(z)$ and nonzero on $X_0(z)$.
\end{enumerate}

Let $\mu=\mesh(X\oplus Y)$. Then $z$ is the only end of $X\oplus Y$ in the open interval $(z-\mu,z+\mu)$. Also, all of the second coordinates $x_i,y_j$ of $X(z),Y(z)$ lie in the closed interval $C=[z-\pi+\mu,z+\pi-\mu]$. Now consider the set $z\times C$. This is a compact set which is disjoint from the cluster $\cT$ which is a closed subset of the open Moebius band. Therefore, there exists an $\e>0$ so that the closed set $[z-\e,z+\e]\times C$ has no points in the cluster $\cT$. By making $\e$ smaller if necessary, we can assume that $\e<\mu$ and that $z-\e=a\pi/2^n$ for integers $a,n$. Let $Y_0(z-\e)=M(z-\e,y_0)$. Then $Y_0(z-\e)$ has one fewer irrational end than $Y_0(z)$.

\ul{Claim 1}. We can choose $\e$ so that the basic morphism
\[
	h:Y_0(z-\e)=M(z-\e,y_0)\to Y_0(z)=M(z,y_0)
\]
is a monomorphism.

To prove this we recall that the walk associated to $Y_0(z)$ is infinite and has objects of $\cT$ converging to the point $(z,z-\pi)$. The first coordinates of these points converse to $z$ from the left. We choose $\e$ so that $z-\e$ is the first coordinate of some object $B_k$ (a component of the $add\,\cT$ approximation of $Y_0(z)$). Then the walk associated to $Y_0(z-\e)$ will be the portion of the walk for $Y_0(z)$ starting at $B_k$. The support of $Y_0(z-\e)$ will be the walk minus its endpoints. This will be a subset of the support of $Y_0(z)$ making the morphism $Y_0(z-\e)\to Y_0(z)$ into a monomorphism by Corollary \ref{cor: f is mono if it is mono on supports}.

\ul{Claim 2}. In the case $f_0=0$ we claim that the morphism $f:X\to Y=Y_0(z)\oplus Y_1(z)\oplus Y_2$ lifts to $\tilde Y=Y_0(z-\e)\oplus Y_1(z)\oplus Y_2$, i.e., $f=(h\oplus 1\oplus1)\circ \tilde f$.
\[\xymatrixrowsep{18pt}\xymatrixcolsep{8pt}
\xymatrix{
 &&& \tilde Y\ar@{=}[r]& Y_0(z-\e)\oplus Y_1(z)\oplus Y_2\ar@{>->}[d]^{h\oplus 1\oplus1}\\
X \ar[rrr]_f\ar[rrru]^{\tilde f} &&& Y \ar@{=}[r]&Y_0(z)\oplus Y_1(z)\oplus Y_2
	}
\]
Since $\tilde Y$ has fewer irrational ends than $Y$, the morphism $\tilde f:X\to \tilde Y$ has a kernel by induction. Since $h$ is a monomorphism by Claim 1, the kernel of $\tilde f$ is also the kernel of $f=(h\oplus 1\oplus1)\circ \tilde f$.

To prove Claim 2, it suffices to lift the component $X\to Y_0(z)$ of $f$ to $Y_0(z-\e)$. Since $f_0:X(z)\to Y_0(z)$ is zero, we can take its lifting $X(z)\to Y_0(z-\e)$ to be zero. The other components of $X$ have coordinates $(a,b)$ where $a\le z-\mu<z-\e$. Therefore the morphism $X_2\to Y_0(z)$ lifts to $Y_0(z-\e)$.

\ul{Claim 3}. Finally, suppose we are in Case 2. Then we claim that there is a morphism $\tilde f$ making the following diagram commute.
\[\xymatrixrowsep{18pt}\xymatrixcolsep{8pt}
\xymatrix{
\tilde X= X_0(z-\e)\oplus X_1(z)\oplus X_2\ar[rrr]^{\tilde{ f}}\ar[d]_{j\oplus1\oplus1}&&& \tilde Y=Y_0(z-\e)\oplus Y_1(z)\oplus Y_2\ar@{>->}[d]^{h\oplus 1\oplus1}\\
X=X_0(z)\oplus X_1(z)\oplus X_2\ar[rrr]^{ f'}\ar[d]^\cong_{g\oplus id_{X_2}}&&& Y=Y_0(z)\oplus Y_1(z)\oplus Y_2\ar[d]^{1}\\
X=X_0(z)\oplus X_1(z)\oplus X_2\ar[rrr]^{ f} &&& Y=Y_0(z)\oplus Y_1(z)\oplus Y_2
	}
\]
Here $g$ is the automorphism of $X(z)=X_0(z)\oplus X_1(z)$ given by Proposition \ref{two cases for maps in P_z}. Thus, the $X_1(z)\to Y_0(z)$ component of $f'$ is zero. The morphism $X_2\to Y_0(z)$ lifts to $Y_0(z-\e)$ as in the proof of Claim 2 and $\tilde f_0:X_0(z-\e)\to Y_0(z-\e)$ is the morphism which makes the following square Cartesian.
\[
\xymatrix{
X_0(z-\e)=M(z-\e,x_0)\ar[r]^{\tilde{f_0}}\ar[d]_j& Y_0(z-\e)=M(z-\e,y_0)\ar[d]^h\\
X_0(z-\e)=M(z,x_0)\ar[r]^{\tilde{f_0}}& Y_0(z-\e)=M(z,y_0)
}
\]
This square is Cartesian in $\cB$ and thus in $\cC$ (forms a triangle $X_0(z-\e)\to X_0(z)\oplus Y_0(z-\e)\to Y_0(z)\to X_0(z-\e)'$) since the horizontal maps are isomorphisms on the first coordinates and the vertical maps are isomorphisms in the vertical coordinates. This, in turn makes the top square in the previous diagram Cartesian in $\cC$. 

The morphism $\tilde f$ has a kernel, say $A$, by induction on the number of irrational ends in the target. Now we claim that the kernel of $\tilde f$ is equal to the kernel of $f'$. Since $j$ is a monomorphism in $\cC/\cT$, the composition $A\to \tilde X\to X$ is also a monomorphism. Thus it suffices to show that any map $k:Z\to X$ so that $f'\circ k:Z\to Y$ is zero in $\cC/\cT$ lifts to $A$. In the category $\cC$, the morphism $f'\circ k$ lifts to $T^Y$. But $h$ induces a split monomorphism $T^{\tilde Y}\to T^Y$ by construction and the missing part comes from $T^{X_0(z)}$. Therefore, $f'\circ k$ lifts to $X\oplus \tilde Y$. So, we can modify $k:Z\to X$ by a map which factors through $\cT$ and we have another map $Z\to \tilde Y$ which factors through $\cT$ so that their sum goes to zero in $Y$. By the fact that the upper square is Cartesian in $\cC$ we conclude that these lift to a map $\tilde k:Z\to \tilde X$ so that $\tilde f\circ\tilde k$ is trivial in $\cC/\cT$. So, $\tilde k$ lifts to $A$ as a morphism in $\cC/\cT$. This proves that $A$ is the kernel of $f'$ and thus also of $f$.

This completes the recursive construction of the kernel of $f$ in all cases.
\end{proof}

\begin{lem}
If $f:X\to Y$ is a morphism in $\cC/\cT$ then the following are equivalent.
\begin{enumerate}
\item $f$ is an epimorphism in $\cC/\cT$.
\item The induced mapping
\[
	\Hom_{\cC/\cT}(Y,\t S)\to \Hom_{\cC/\cT}(X,\t S)
\]
is a monomorphism for all $S\in\cT$.
\item The induced map
\[
	\Hom_{\cC/\cT}(\t^{-1}S,X)\to \Hom_{\cC/\cT}(\t^{-1}S,Y)
\]
is an epimorphism for all $S\in \cT$.
\end{enumerate}
\end{lem}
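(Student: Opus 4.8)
The plan is to reduce the whole statement to three facts already available: (a) the Proposition that a morphism $g$ of $\cC/\cT$ is zero if and only if $\Hom_{\cC/\cT}(\t^{-1}S,g)=0$ for all $S\in\cT$; (b) infinitesimal Auslander--Reiten duality, i.e.\ a natural isomorphism $\Hom_{\cC/\cT}(\t^{-1}S,X)\cong D\Hom_{\cC/\cT}(X,\t S)$, natural in $X$; and (c) the elementary fact that a $K$-linear map of vector spaces is zero, resp.\ injective, resp.\ surjective, exactly when its $K$-dual is zero, resp.\ surjective, resp.\ injective (the injective/surjective statements needing no finiteness hypothesis). Granting these, the argument is entirely formal apart from one point about the limits defining the infinitesimal translations.

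First I would establish $(2)\Leftrightarrow(3)$ together with the ``dual'' of fact (a). For any $f\colon X\to Y$, naturality of the isomorphism in (b) identifies the map $\Hom_{\cC/\cT}(\t^{-1}S,f)$ with $D(f^{*})$, where $f^{*}\colon\Hom_{\cC/\cT}(Y,\t S)\to\Hom_{\cC/\cT}(X,\t S)$ is precomposition by $f$; by (c) the map $D(f^{*})$ is surjective if and only if $f^{*}$ is injective, which is exactly $(3)\Leftrightarrow(2)$. Applying (c) in the same way to an arbitrary morphism $g$, we get $\Hom_{\cC/\cT}(\t^{-1}S,g)=0$ iff $g^{*}=0$, so (a) transforms into: $g$ is zero in $\cC/\cT$ iff $\Hom_{\cC/\cT}(g,\t S)=0$ for all $S\in\cT$. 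I will use this dual form below.

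Next I would prove $(1)\Leftrightarrow(2)$, which is morally just Corollary~\ref{cor: f is mono if it is mono on supports} transported to the opposite category $\cC/\cT\cong(\cC/\cT)^{\mathrm{op}}$; here is the direct argument. For $(1)\Rightarrow(2)$: if $f$ is an epimorphism, fix $S$ and take $\psi\in\Hom_{\cC/\cT}(Y,\t S)=\lim_{\e\to0+}\Hom_{\cC/\cT}(Y,S_{-\e})$ with $f^{*}\psi=0$; since this is a filtered colimit, $\psi$ is represented at some finite stage by $\psi_{\e}\colon Y\to S_{-\e}$ with $\psi_{\e}\circ f=0$ in $\cC/\cT$, and since $S_{-\e}$ is a genuine nonzero object of $\cC/\cT$ for small $\e>0$ and $f$ is epi, we get $\psi_{\e}=0$, hence $\psi=0$; thus $f^{*}$ is injective. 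For $(2)\Rightarrow(1)$: if $f$ is not an epimorphism, choose $h\colon Y\to Z$ with $h\neq0$ and $h\circ f=0$ in $\cC/\cT$; by the dual form of (a) there are $S\in\cT$ and $\phi\in\Hom_{\cC/\cT}(Z,\t S)$ with $h^{*}(\phi)\neq0$, and functoriality of $\Hom_{\cC/\cT}(-,\t S)$ gives $f^{*}(h^{*}(\phi))=(h\circ f)^{*}(\phi)=0$, so $h^{*}(\phi)$ is a nonzero element of $\ker f^{*}$, contradicting $(2)$.

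The main obstacle, such as it is, is purely bookkeeping with the colimits defining $\t^{\pm1}S$: one must use that $\Hom_{\cC/\cT}(Y,\t S)$ and $\Hom_{\cC/\cT}(\t^{-1}S,X)$ are \emph{filtered} colimits of ordinary Hom-groups, so that vanishing --- and hence injectivity and surjectivity --- is detected at a finite stage, and that at each such stage $S_{\pm\e}$ (for $\e$ small enough) is a genuine object of $\cC/\cT$ lying outside $\cT$, so that the universal properties of monomorphisms and epimorphisms in $\cC/\cT$ apply verbatim. The only other thing to check carefully is that the duality in (b) really is natural in the displayed variable, so that the identification of $\Hom_{\cC/\cT}(\t^{-1}S,f)$ with $D(f^{*})$ is legitimate; with that in hand, no further work is required.
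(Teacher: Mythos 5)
Your proof is correct and takes essentially the same route the paper sketches: $(2)\Leftrightarrow(3)$ via infinitesimal AR-duality combined with the standard behaviour of $D=\Hom_K(-,K)$ on injections and surjections, and $(1)\Leftrightarrow(2)$ by the epimorphism analogue of Corollary \ref{cor: f is mono if it is mono on supports}, with the filtered-colimit bookkeeping for $\t S$, the fact that $S_{-\e}$ is a genuine object of $\cC/\cT$ for small $\e$, and the dualized form of the zero-detection proposition all supplied explicitly. No gaps.
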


\begin{proof}
The equivalence between (1) and (2) is analogous to the corollary above. The equivalence between (2) and (3) follows from infinitesimal AR-duality.
\end{proof}

Putting together the above lemma and corollary we get the following.

\begin{prop}\label{mono and epi iff iso on support}
A morphism $f:X\to Y$ in $\cC/\cT$ is both a monomorphism and an epimorphism if and only if $f$ induces an isomorphism of supports, i.e., if
\[
	\Hom_{\cC/\cT}(\t^{-1}S,X)\to \Hom_{\cC/\cT}(\t^{-1}S,Y)
\]
is an isomorphism for all $S\in \cT$.
\end{prop}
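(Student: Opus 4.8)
The plan is to deduce this directly from Corollary~\ref{cor: f is mono if it is mono on supports} together with the lemma immediately preceding this proposition, so that no new work is required beyond unwinding definitions. First I would record the elementary observation that for each $S\in\cT$ the functor $\Hom_{\cC/\cT}(\t^{-1}S,-)$ takes values in $K$-vector spaces, so a linear map of the form $\Hom_{\cC/\cT}(\t^{-1}S,f)$ is an isomorphism precisely when it is simultaneously injective and surjective.

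Next I would invoke Corollary~\ref{cor: f is mono if it is mono on supports}, which says that $f$ is a monomorphism in $\cC/\cT$ if and only if $\Hom_{\cC/\cT}(\t^{-1}S,f)$ is a monomorphism for every $S\in\cT$, and part~(3) of the preceding lemma, which says that $f$ is an epimorphism in $\cC/\cT$ if and only if $\Hom_{\cC/\cT}(\t^{-1}S,f)$ is an epimorphism for every $S\in\cT$. Combining the two, $f$ is both a monomorphism and an epimorphism in $\cC/\cT$ if and only if, for every $S\in\cT$, the map $\Hom_{\cC/\cT}(\t^{-1}S,f)$ is both injective and surjective, hence an isomorphism. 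This is exactly the asserted isomorphism of supports, and by infinitesimal AR-duality it is equivalent to $\Hom_{\cC/\cT}(Y,\t S)\to\Hom_{\cC/\cT}(X,\t S)$ being an isomorphism for all $S$ as well, which one could note in passing.

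The only subtlety worth spelling out is the interaction of the two universally quantified conditions: the conjunction of ``$\Hom_{\cC/\cT}(\t^{-1}S,f)$ is a monomorphism for all $S$'' with ``$\Hom_{\cC/\cT}(\t^{-1}S,f)$ is an epimorphism for all $S$'' is literally the condition ``$\Hom_{\cC/\cT}(\t^{-1}S,f)$ is an isomorphism for all $S$,'' since both clauses constrain the same family of linear maps indexed by the same set $\cT$. Accordingly I do not expect any genuine obstacle here; the entire content of the proposition resides in the two earlier results it packages together, and the proof is essentially a one-line combination of them.
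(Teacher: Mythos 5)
Your proposal is correct and is essentially identical to the paper's own argument: the paper proves this proposition simply by combining Corollary~\ref{cor: f is mono if it is mono on supports} with the preceding lemma, exactly as you do, the only content being that a $K$-linear map is an isomorphism precisely when it is both injective and surjective.
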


For any $S\in\cT$ and $X\in \cC$ let $\Hom_0(S,X)$ denote the quotient of $\Hom_\cC(S,X)$ by all morphisms which factor through an object $T\in\cT$ which is not isomorphic to $S$. In particular, if $X\in add\cT$ then $\Hom_0(S,X)=K^n$ where $n$ is the number of summands of $T$ isomorphic to $S$. For general $X$ we have the following.

\begin{cor}
$\Hom_0(S,X)\cong \Hom_0(S,T_0^X)\cong K^n$ where $n$ is the number of times that $S$ occurs as a direct summand of $T_0^X$.
\end{cor}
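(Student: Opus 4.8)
The plan is to derive both isomorphisms from the right $add\,\cT$ approximation $p:T_0^X\to X$ (which, by the corollary above stating that $T_0^X\to X$ is a right $add\,\cT$ approximation already in $\cC$, may be used in $\cC$ itself): I will first compute $\Hom_0(S,T_0^X)$, and then show that $p$ induces an isomorphism $\Hom_0(S,T_0^X)\cong\Hom_0(S,X)$.

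\emph{Step 1 (the value of $\Hom_0(S,T_0^X)$).} For an indecomposable object $B$ of $\cT$ I would first observe that $\Hom_0(S,B)=0$ when $B\not\cong S$, since the identity of $B$ factors every morphism $S\to B$ through $B\in\cT$, which is not isomorphic to $S$; and $\Hom_0(S,B)=K$ when $B\cong S$, because $\End_\cC(S)=K$ (a basic property of $\cC_\pi$) while $\mathrm{id}_S$ cannot factor through an object of $add(\cT\backslash\{S\})$---projecting onto an indecomposable summand would exhibit $S$ as a direct summand of an indecomposable object of $\cT$ distinct from $S$. Interpreting $\Hom_0(S,T_0^X)$ via the finite subsums $\bigoplus_{|i|\le N}B_i$ as in the construction of $T_0^X$, and using that a morphism out of the indecomposable $S$ factors through $add(\cT\backslash\{S\})$ if and only if each of its finitely many nonzero components does, this yields $\Hom_0(S,T_0^X)\cong\bigoplus_i\Hom_0(S,B_i)\cong K^n$, with $n$ the number of $i$ for which $B_i\cong S$. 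Since the objects of the walk are distinct objects of the cluster $\cT$ and distinct objects of a cluster are pairwise non-isomorphic, $n$ equals the multiplicity of $S$ as a direct summand of $T_0^X$ (in fact $n\le1$).

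\emph{Step 2 ($p$ induces an isomorphism on $\Hom_0(S,-)$).} Composition with $p$ preserves factorizations through $add(\cT\backslash\{S\})$, so $p_*$ descends to $\bar p_*:\Hom_0(S,T_0^X)\to\Hom_0(S,X)$, and $\bar p_*$ is surjective because $S\in add\,\cT$ and $p$ is a right $add\,\cT$ approximation. If $S$ is isomorphic to no $B_i$, the source of $\bar p_*$ is $0$ by Step 1, hence its image $\Hom_0(S,X)$ is $0$ as well, and we are done. Otherwise $S\cong B_{i_0}$ for a unique $i_0$, the source is $K$, and it remains to prove $\bar p_*$ injective. So suppose $\beta:S\to T_0^X$ is such that $p\beta$ factors as $S\xrarrow{u}T'\xrarrow{v}X$ with $T'\in add(\cT\backslash\{S\})$ (the case $p\beta=0$ is included, with $T'=0$). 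Since $p$ is a right $add\,\cT$ approximation, $v=pw$ for some $w:T'\to T_0^X$, so $p(\beta-wu)=0$; by the exactness of the sequence $0\to\bigoplus A_i\to T_0^X\xrarrow{p}X\to0$ (interpreted in $\cC$; morphisms in $\cC$ out of $S\in\cT$ are induced from $\cB$ as noted above), $\beta-wu$ then factors through $\bigoplus A_i$. The decisive point is that along the walk associated to $X$ the source objects $A_i$ and the sink objects $B_j$ strictly alternate, so no $A_i$ coincides with any $B_j$; since $S\cong B_{i_0}$ and a cluster is a set of pairwise non-isomorphic objects, no $A_i$ is isomorphic to $S$, whence $\bigoplus A_i\in add(\cT\backslash\{S\})$. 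Therefore both $wu$ (through $T'$) and $\beta-wu$ (through $\bigoplus A_i$) factor through $add(\cT\backslash\{S\})$, so $\beta$ does too; that is, $\beta=0$ in $\Hom_0(S,T_0^X)$. Thus $\bar p_*$ is an isomorphism, and with Step 1 we obtain $\Hom_0(S,X)\cong\Hom_0(S,T_0^X)\cong K^n$.

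I expect the injectivity of $\bar p_*$ in Step 2 to be the main obstacle, and inside it the observation that the ``kernel'' $\bigoplus A_i$ of the approximation lies in $add(\cT\backslash\{S\})$---this is precisely what upgrades ``$p\beta$ is trivial in $\Hom_0$'' to ``$\beta$ is trivial in $\Hom_0$''. It rests on the combinatorial structure of the walk (Lemma \ref{lem:properties of minimal walk} and the construction of $T_0^X$), namely the strict interleaving of source and sink objects, together with the defining property that a cluster consists of pairwise non-isomorphic objects. A secondary technical point is to invoke the kernel/exactness statement for $0\to\bigoplus A_i\to T_0^X\to X\to0$ in the possibly-infinite form established when $T_0^X$ was constructed, rather than a naive version in $\cB$.
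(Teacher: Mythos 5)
Your Steps 1 and the surjectivity part of Step 2 are fine (the paper states this corollary without proof, as an immediate consequence of the approximation property, so the real question is whether your argument is watertight). The gap is in the injectivity argument of Step 2, at the sentence ``by the exactness of the sequence $0\to\bigoplus A_i\to T_0^X\xrightarrow{p}X\to0$ (interpreted in $\cC$), $\beta-wu$ then factors through $\bigoplus A_i$.'' The kernel statement proved in the paper lives in $\cB$: it says that a morphism $f$ into $\bigoplus B_i$ with $p\circ f=0$ \emph{in $\cB$} lifts to $\bigoplus A_i$. Your hypothesis is only that $p(\beta-wu)=0$ in $\cC$, and the functor $\cB\to\cC$ is full on the relevant objects but far from faithful: a composite can vanish in $\cC$ (because it factors through $\cB_\pi$, or because of the orbit identifications) while its $\cB$-lift is a nonzero basic morphism. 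So the $\cB$-exactness cannot be applied to the lift, and the parenthetical ``morphisms in $\cC$ out of $S\in\cT$ are induced from $\cB$'' --- which does suffice for the approximation (surjectivity-type) corollary --- does not justify transferring the kernel property. Your closing remark shows you were worried about the infinite-sum formulation, but the $\cB$-versus-$\cC$ vanishing issue is the one that actually needs attention; exactness of $\Hom_\cC(S,-)$ applied to this sequence is a statement requiring its own (geometric) proof, not a formal consequence of the proposition in $\cB$.

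The good news is that in the only case where you invoke it, namely $S\cong B_{i_0}$ a sink of the walk, the needed fact is easy to prove directly, so the gap is reparable without changing your strategy. Since the walk zig-zags up and to the left, no other object of the walk (hence, no other object of $\cT$ inside the relevant region) is weakly northeast of the sink $B_{i_0}$, nor strictly southwest of it within the $\pi$-window; consequently $\Hom_\cC(S,T_0^X)=\Hom_\cC(S,B_{i_0})=K$, generated by the inclusion $\iota$ of the summand $B_{i_0}$, and $p\iota$ is the basic morphism $B_{i_0}\to X$, which is nonzero in $\cC$. Hence $p(\beta-wu)=0$ already forces $\beta-wu=0$, and no appeal to the kernel of $p$ is needed. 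Alternatively, one can bypass the approximation sequence altogether: by the factorization lemma in the paper, the nonzero morphism $B_{i_0}\to X$ factors through $Z$ only for $Z$ in the closed rectangle between $B_{i_0}$ and $X$, and the zig-zag geometry shows this rectangle contains no object of $\cT$ other than $B_{i_0}$ itself; this gives $\Hom_0(S,X)\neq0$ when $S\cong B_{i_0}$, which together with your Step 1 and surjectivity finishes the proof.
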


\begin{lem}
For any $S\in\cT$ and $X$ in $\cC$ with no component isomorphic to $S$, there is a natural four term exact sequence
\[
	0\to \Hom_0(S,T_1^X)\to \Hom_{\cC/\cT}(\t^{-1}S,X)\xrarrow \f \Hom_{\cC/\cT}(\rt S,X)\to \Hom_0(S,X)\to 0
\]
\end{lem}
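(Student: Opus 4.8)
\emph{Overview and reduction.} The plan is to reduce to $X$ indecomposable, build the three maps of the sequence out of the nonsplit sequence \eqref{nonsplit sequence in B} and the minimal $add\,\cT$ approximation $0\to T_1^X\to T_0^X\to X\to 0$, prove exactness at the middle term by a lifting argument, and then settle the two outer maps together with exactness at $\Hom_{\cC/\cT}(\t^{-1}S,X)$ by a short case analysis on where $S$ sits on the walk of $X$. For the reduction: all four functors are additive in $X$, and since the minimal $add\,\cT$ approximation is unique up to isomorphism, $T_0^X$ and $T_1^X$ split over the indecomposable summands of $X$; so we may take $X$ indecomposable. If $X$ is an indecomposable object of $\cT$ — necessarily $X\not\cong S$, since $X$ has no summand isomorphic to $S$ — then $T_1^X=0$, $T_0^X=X$, and all four terms vanish. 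So assume $X\notin\cT$ is indecomposable, and write $S=M(a,b)$, $S_\e=M(a+\e,b+\e)$, $S_\e^{\vertical}=M(a,b+\e)$, $S_\e^{\horizontal}=M(a+\e,b)$.

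\emph{The three maps.} The morphism $\f$ is induced by the compatible family of basic morphisms $S_\e^{\vertical}\oplus S_\e^{\horizontal}\to S_{\e'}$ for $0<\e\le\e'$ — the map $(1,-1)$ of \eqref{nonsplit sequence in B} with $x=a,\ x'=a+\e',\ y=b,\ y'=b+\e'$ — which in the limit is a morphism $\rt S\to\t^{-1}S$, natural in $X$. The last map $\Hom_{\cC/\cT}(\rt S,X)\to\Hom_0(S,X)$ is induced by precomposition with the basic morphisms $S\to S_\e^{\vertical}$ and $S\to S_\e^{\horizontal}$, with the sign dictated by \eqref{nonsplit sequence in B}; since $S\to S_\e^{\vertical}\to S_{\e'}$ and $S\to S_\e^{\horizontal}\to S_{\e'}$ are the same basic morphism with opposite signs, its composite with $\f$ is zero. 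By the Corollary above, $\Hom_0(S,T_1^X)$ is the sum of copies of $K$ indexed by the source points $A_i$ of the walk of $X$ with $A_i\cong S$; the first map of the sequence sends each such summand to the class of the basic morphism $S_\e\to X$, which is nonzero in $\cC/\cT$ because $A_i=S$ lies on the walk of $X$, so $S_\e$ lies in the upper region $U_X$.

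\emph{Exactness at $\Hom_{\cC/\cT}(\rt S,X)$.} Given $(g^{\vertical},g^{\horizontal})$ killed by the last map, the composite with the diagonal $S\to S_{\e'}^{\vertical}\oplus S_{\e'}^{\horizontal}$ is zero in $\Hom_0(S,X)$, i.e.\ it factors through some $T\in add\,\cT$ with no summand isomorphic to $S$. Since the morphism $T\to X$ factors through the approximation $p\colon T_0^X\to X$, this composite is absorbed into a morphism $S_{\e'}^{\vertical}\oplus S_{\e'}^{\horizontal}\to X$ that factors through $\cT$; subtracting it yields a representative of $(g^{\vertical},g^{\horizontal})$ whose restriction to $S$ vanishes \emph{in $\cC$}, and by exactness of \eqref{nonsplit sequence in B} this representative factors through $S_{\e'}$, so $(g^{\vertical},g^{\horizontal})\in\im\f$. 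Together with $(\text{last map})\circ\f=0$ this gives exactness there.

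\emph{The outer maps and exactness at $\Hom_{\cC/\cT}(\t^{-1}S,X)$.} It remains to check that $\f$ is injective, the last map surjective, the first map injective, and the sequence exact at $\Hom_{\cC/\cT}(\t^{-1}S,X)$; this is where the geometry enters. Using the walk of $X$ (with $T_1^X=\bigoplus A_i$, $T_0^X=\bigoplus B_i$), the Lemma describing $\Hom_{\cC/\cT}$, and the triviality of $\Hom_{\cC/\cT}(-,T)$ for $T\in add\,\cT$, one distinguishes four positions of $S$, each reducing to the linear algebra of a single morphism. (a) $S$ off the walk of $X$: all four terms vanish. (b) $S$ a source point $A_i$: $\Hom_0(S,T_1^X)=K=\Hom_{\cC/\cT}(\t^{-1}S,X)$, identified via $S_\e\to X$, while $\f=0$ and $\Hom_{\cC/\cT}(\rt S,X)=0=\Hom_0(S,X)$ because $S_\e^{\horizontal}\to X$ and $S_\e^{\vertical}\to X$ factor through $B_{i-1},B_i\in\cT$. (c) $S$ a sink point lying in the support of $X$: $\Hom_0(S,T_1^X)=0$, $\Hom_{\cC/\cT}(\t^{-1}S,X)=K$, $\Hom_{\cC/\cT}(\rt S,X)=K^2$, $\Hom_0(S,X)=K$, with $\f$ the diagonal and the last map the difference. (d) $S$ a sink endpoint of the walk lying on a coordinate hyperplane through $X$, so $S$ is not in the support of $X$: then $\Hom_{\cC/\cT}(\t^{-1}S,X)=0$, exactly one of $S_\e^{\horizontal}\to X$, $S_\e^{\vertical}\to X$ is defined and nonzero in $\cC/\cT$, so $\Hom_{\cC/\cT}(\rt S,X)=K=\Hom_0(S,X)$ and the last map is an isomorphism. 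In each case the four-term sequence is manifestly exact. I expect the main obstacle to be precisely this case analysis — determining which perturbed morphisms survive in $\cC/\cT$ at the endpoints of finite and semi-infinite walks (case (d)), and maintaining sign consistency from \eqref{nonsplit sequence in B} so that the last map kills $\im\f$ and all three maps are natural in $X$; the ancillary facts about $R_X$, its boundary, and the relation between the walk of $X$ and the support of $X$ are all available from Proposition \ref{T-approximation of objects of X}, the Lemma on $\Hom_{\cC/\cT}$, and Corollary \ref{cor: f is mono if it is mono on supports}.
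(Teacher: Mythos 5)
Your proposal is correct and follows essentially the same route as the paper: reduce to indecomposable $X$ by additivity, then compute all four terms according to the position of $S$ relative to the walk of $X$ (off the walk, a source $A_i$, a non-endpoint sink $B_i$, an endpoint), which is exactly the paper's identification of $\ker\f$ with $\Hom_0(S,T_1^X)$ and $\coker\f$ with $\Hom_0(S,T_0^X)\cong\Hom_0(S,X)$. Your explicit construction of the maps from the sequence \eqref{nonsplit sequence in B} and the lifting argument for exactness at $\Hom_{\cC/\cT}(\rt S,X)$ are fine additions that the paper leaves implicit, but they do not change the underlying argument.
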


\begin{proof} The sequence is natural and additivity in $X$. So, it suffices to prove exactness when $X$ is indecomposable. $\Hom_{\cC/\cT}(\t^{-1}S,X)$ is nonzero if and only if $S$ lies on the walk associated to $X$ but is not one of the endpoints. This is because, for sufficiently small $\e>0$, $S_\e$ is the point right above and to the right of $S$. So, $S_\e$ lies in the rectangle $R_X$ except when $S$ is an endpoint of the walk associated to $X$ and the straight line from $S_\e$ to $X$ does not pass through the walk.

Since $\rt S$ is the direct sum of two objects, one just above $S$ and one just to the right of $S$, $\Hom_{\cC/\cT}(\rt S,X)$ will be nonzero for any object in the walk except for the points $A_i$ which are the summands of $T_1^X$ and it will have rank 2 for the points $B_i$ which are not endpoints. Therefore, the kernel of the map $\f:\Hom_{\cC/\cT}(\t^{-1}S,X)\to \Hom_{\cC/\cT}(\rt S,X)$ measures how many times $S$ occurs in $T_1^X$ and this is exactly $\Hom_0(S,T_1^X)$. And the cokernel of $\f$ is $\Hom_0(S,T_0^X)$ which is isomorphic to $\Hom_0(S,X)$ as long as $S$ is not a component of $X$.
\end{proof}

\begin{thm}
The categories $\cX/\cT$ and $\cC/\cT$ are abelian.
\end{thm}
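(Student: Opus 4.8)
The plan is to verify the axioms of an abelian category for $\cC/\cT$ (the case of $\cX/\cT$ being entirely parallel, restricting to objects with finite support). We already know from Proposition \ref{morphisms in C/T have kernels} that every morphism has a kernel and a cokernel, and $\cC/\cT$ is clearly additive. So the only remaining point is the exactness axiom: every monomorphism is a kernel and every epimorphism is a cokernel. Since $\cC/\cT$ is isomorphic to its opposite, it suffices to prove that every monomorphism $f\colon X\to Y$ is the kernel of its cokernel $Y\to C$.

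The strategy is to reduce everything to the functors $\Hom_{\cC/\cT}(\t^{-1}S,-)$ for $S\in\cT$, which by the last Lemma and Corollary are controlled by the combinatorics of walks. First I would observe that the collection of functors $\{\Hom_{\cC/\cT}(\t^{-1}S,-)\}_{S\in\cT}$ is \emph{jointly faithful} (this is exactly the Proposition stating $f=0$ iff $\Hom_{\cC/\cT}(\t^{-1}S,f)=0$ for all $S$), \emph{jointly detects monos} (Corollary \ref{cor: f is mono if it is mono on supports}), \emph{jointly detects epis} (the Lemma on epimorphisms), and hence, by Proposition \ref{mono and epi iff iso on support}, \emph{jointly conservative}: a morphism is an isomorphism in $\cC/\cT$ iff it induces an isomorphism on every support group. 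Now take a monomorphism $f\colon X\to Y$ with cokernel $q\colon Y\to C=\coker f$, and let $k\colon K\to Y$ be the kernel of $q$. Since $qf=0$, $f$ factors as $f=k\circ u$ for a unique $u\colon X\to K$; I must show $u$ is an isomorphism, and for this it suffices to show $\Hom_{\cC/\cT}(\t^{-1}S,u)$ is an isomorphism of $K$-vector spaces for every $S\in\cT$.

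This is where the real work lies, and I expect it to be the main obstacle. The point is that each $\Hom_{\cC/\cT}(\t^{-1}S,-)$ is \emph{not} an exact functor a priori, but the four-term exact sequence from the last Lemma shows it is built from the "walk data" $\Hom_0(S,T_1^X)$, $\Hom_{\cC/\cT}(\rt S,X)$, $\Hom_0(S,X)$, all of which have transparent geometric meaning in terms of which source/sink points $A_i,B_i$ of the walk for $X$ lie in the rectangle $R_S$ attached to $S$. Concretely I would argue: applying $\Hom_{\cC/\cT}(\t^{-1}S,-)$ to the two triangles used in the construction of kernels (Proposition \ref{morphisms in C/T have kernels} and \ref{X/T has kernels}) and using that $\t^{-1}S$ is an infinitesimal object supported on an open rectangle disjoint from $\cT$, one gets that $\Hom_{\cC/\cT}(\t^{-1}S,-)$ takes the kernel triangle $K\to Y\to C$ and the defining triangle of $f$ to exact sequences of vector spaces — the obstruction terms $\Hom_{\cC/\cT}(\t^{-1}S,T)$ for $T\in add\,\cT$ vanish because the support of $\Hom_{\cC/\cT}(\t^{-1}S,-)$ is an open rectangle containing no object of $\cT$, so $\t^{-1}S$ sees no object of $add\,\cT$ in $\cC/\cT$. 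Granting that, the five-lemma applied to

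\begin{equation*}
\xymatrix{
0\ar[r]&\Hom(\t^{-1}S,X)\ar[r]\ar[d]_{\Hom(\t^{-1}S,u)}&\Hom(\t^{-1}S,Y)\ar[r]\ar@{=}[d]&\Hom(\t^{-1}S,C)\ar@{=}[d]\\
0\ar[r]&\Hom(\t^{-1}S,K)\ar[r]&\Hom(\t^{-1}S,Y)\ar[r]&\Hom(\t^{-1}S,C)
}
\end{equation*}

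(abbreviating $\Hom=\Hom_{\cC/\cT}$, with the left exactness on the top row coming from $f$ being a monomorphism via Corollary \ref{cor: f is mono if it is mono on supports}) forces $\Hom_{\cC/\cT}(\t^{-1}S,u)$ to be an isomorphism for all $S$. By joint conservativity, $u$ is an isomorphism, so $X=\ker(\coker f)$. The dual argument, using $\Hom_{\cC/\cT}(-,\t S)$ and infinitesimal AR-duality, shows every epimorphism is a cokernel. Hence $\cC/\cT$, and likewise $\cX/\cT$, is abelian.
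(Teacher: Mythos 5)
There is a genuine gap at the decisive step: the appeal to ``joint conservativity.'' Proposition \ref{mono and epi iff iso on support} says only that a morphism is \emph{both a monomorphism and an epimorphism} if and only if it induces an isomorphism on all the support groups $\Hom_{\cC/\cT}(\t^{-1}S,-)$; it does \emph{not} say that such a morphism is an isomorphism in $\cC/\cT$. So even granting your five-lemma computation, all you could conclude about $u\colon X\to K$ is that it is a bimorphism, and in a category that merely has kernels and cokernels a bimorphism need not be invertible (Hausdorff topological abelian groups are the standard example). Proving that bimorphisms in $\cX/\cT$ and $\cC/\cT$ really are isomorphisms is exactly the hard content of the paper's proof of this theorem: for objects of $\cX$ it compares the four-term exact sequences for $X$ and $Y$, deduces $T_1^X\cong T_1^Y$ and that $T_0^X$ is a direct summand of $T_0^Y$, and then runs a triangle argument to split off $T_2\in add\,\cT$ and conclude $Y\cong X\oplus T_2$; for general objects it inducts on the number of irrational ends using the Cartesian squares from the proof of Proposition \ref{morphisms in C/T have kernels}. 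None of this is in your proposal, so at its core the argument assumes what is to be proved.

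A secondary unsupported step is the exactness you ``grant'': to make $\Hom_{\cC/\cT}(\t^{-1}S,u)$ surjective your diagram needs the top row to be exact at $\Hom_{\cC/\cT}(\t^{-1}S,Y)$, i.e.\ every map $\t^{-1}S\to Y$ killed by $q$ must factor through $f$. But $\cC/\cT$ is not triangulated, the cokernel $C$ is not the cone of $f$ (it is built from $add\,\cT$-approximations), and the vanishing of $\Hom_{\cC/\cT}(\t^{-1}S,T)$ for $T\in add\,\cT$ does not by itself yield half-exactness of the support functors on kernel--cokernel pairs in the quotient; exactness of these functors is essentially equivalent to the module-theoretic description of $\cC/\cT$ established only in Section 3. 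So both the conservativity and the exactness your argument rests on are precisely where the real work lies, and neither is supplied.
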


\begin{proof}
It suffices to show that every morphism which is both a monomorphism and an epimorphism is an isomorphism. So, suppose that $f:X\to Y$ is such a morphism.  By Proposition \ref{mono and epi iff iso on support}, $f$ induces an isomorphism on supports. So, $X,Y$ have the same number of irrational ends. Suppose for a moment that $X,Y$ are both in $\cX/\cT$. Consider the 4-term sequences given by the lemma above:
\[
\xymatrix{
	0\ar[r]& \Hom_0(S,T_1^X)\ar[r]\ar[d]_\a& \Hom_{\cC/\cT}(\t^{-1}S,X)\ar[r]\ar[d]_\b &\Hom_{\cC/\cT}(\rt S,X)\ar[r]\ar[d]_\g& \Hom_0(S,X)\ar[r]\ar[d]_\delta &0\\
	0\ar[r]& \Hom_0(S,T_1^Y)\ar[r]& \Hom_{\cC/\cT}(\t^{-1}S,Y)\ar[r]&\Hom_{\cC/\cT}(\rt S,Y)\ar[r] & \Hom_0(S,Y)\ar[r] &0\\
	}
\]
Since $f$ induces an isomorphism on supports, $\b$ is an isomorphism. Since $\rt S$ represents a sequence of objects which do not lie in $\cT$, we have that $\g$ is a monomorphism
Chasing the diagram we conclude that $\a$ is an isomorphism, so that $T_1^X\cong T_1^Y$ and $\delta$ is a monomomorphism, so that $T_0^X$ is a direct summand of $T_0^Y$. Call the other summand $T_2$. Then the morphism $f$ fits into the following map of triangles:
\[
\xymatrix{
	T_1^X\ar[r]\ar[d]^\cong&T_0^X\ar[r]\ar[d]^j& X\ar[r]\ar[d]^f& T_1^X\ar[d]^\cong\\
	T_1^Y\ar[r] & T_0^Y\ar[r] & Y\ar[r] & T_1^Y \\	
}
\]
Since $T_1^X\cong T_1^Y$, the middle square is homotopy cartesian. So, $f$ completes to a triangle
\[
	X\to Y \to T_2\to X
\]
But this splits, with splitting given by $T_2\to T_0^Y\to Y$. So, $Y\cong X\otimes T_2$ where $T_2\in add\,\cT$. Therefore $f:X\to Y$ is an isomorphism in $\cX/\cT$ as claimed.

Now suppose that $X,Y$ are not in $\cX$. Then we proceed by induction on the number of irrational ends of $Y$. Going back to the proof of Proposition \ref{morphisms in C/T have kernels}, the first possibility  ($f_0=0$) cannot occur since that would make the cokernel of $f$ nontrivial. So, we are in the second case where we have a Cartesian square in $\cC$. By induction, the morphism $\tilde f:\tilde X\to \tilde Y$ in the proof of Proposition \ref{morphisms in C/T have kernels} is an isomorphism in $\cC/\cT$. But this implies that $X_0(z-\e)\cong Y_0(z-\e)$ since these are the only summands with an end at $z-\e$. Therefore $x_0=y_0$ and $X_0(z)\cong Y_0(z)$ and $f$ is also an isomorphism in $\cC/\cT$.
\end{proof}

\section{The infinite Jacobian algebra}

Let $\LL$ be the infinite algebra without unit defined to be the subring of $\End_\cX(T)$ consisting of all endomorphism which are zero on all but finitely many components of $T$ where $T$ is the direct sum of indecomposable objects of $\cT$, choosing one object from each isomorphism class. For example, we can take all objects to have positive parity in some fixed fundamental domain. This is the Jacobian algebra of an infinite quiver with potential given by taking the unique infinite simply connected planar trivalent tree and replacing each vertex with a triangle and each edge with a shared vertex of two triangles. Orient all the triangles clockwise and take the potential to be the sum of all of these 3-cycles. Then the Jacobian algebra $\LL$ is given by the infinite quiver $Q$ with the relation that the composition of any two arrows in the same triangle is zero.

\[\xymatrixrowsep{10pt}\xymatrixcolsep{10pt}
\xymatrix{
&&&&&&&\bullet\ar[d]\\
&&&&& \cdots &\bullet\ar[ur]&\bullet\ar[l]\ar[dd]\\
&&&&& \bullet\ar[d] \\
&&& \cdots&\bullet\ar[ur] &\bullet\ar[l]\ar[uurr]&&\bullet\ar[ll]\ar[dddd]\\ 
&&&\bullet\ar[d]\\
& \cdots &\bullet\ar[ur]&\bullet\ar[l]\ar[dd]\\
& \bullet\ar[d] \\
\bullet\ar[ur] &\bullet\ar[l]\ar[uurr]&&\bullet\ar[ll]\ar[uuuurrrr]&&&&\bullet\ar[llll] & \cdots\\ 
	}
\]

\subsection{Finitely generated modules over the Jacobian}


\begin{prop}
Every finitely generated modules over $\LL$ is a direct sum of finitely many string modules which may be infinite dimensional.
\end{prop}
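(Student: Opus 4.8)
The plan is to reduce the statement to a purely combinatorial assertion about the string algebra $\LL$, using the fact that $\LL$ is (by Butler--Ringel \cite{BR87}, extended to the infinite quiver $Q$) a string algebra: at each vertex at most two arrows start and at most two arrows end, and the relations are exactly the length-two paths inside each triangle. First I would fix notation for strings: a string is a reduced walk in $Q$ avoiding the relations and their formal inverses, and a string module $M(w)$ is the representation obtained by gluing copies of $K$ along $w$. One must allow $w$ to be a finite walk, a half-infinite walk, or a doubly-infinite walk; the corresponding string module is finitely generated precisely when $w$ has only finitely many ``peaks'' (sources of $w$), since the generators of $M(w)$ as a $\LL$-module sit at the peaks of $w$.

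The core of the argument is the standard one for string algebras, which I would carry out in three steps. Step 1: show every finitely generated $\LL$-module $M$ has a decomposition into indecomposables. Since $\LL$ has no unit but is a directed union of the finite-dimensional string subalgebras $\LL_F$ attached to finite subquivers $F\subseteq Q$, a finitely generated $M$ is the $\LL$-linear span of finitely many elements, hence supported (as a quiver representation) on finitely many vertices together with whatever infinite ``tails'' the arrows generate; I would argue that $M$ is a module over a suitable (possibly still infinite, but controlled) subalgebra and that the Krull--Schmidt property still holds because $\End_\LL(M)$ is semiperfect — each indecomposable summand has local endomorphism ring, these rings being finite-dimensional since any endomorphism of a string module is determined by finitely many scalars even when the module is infinite-dimensional. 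Step 2: invoke (the infinite-quiver version of) the Butler--Ringel classification to say that every indecomposable module over a string algebra is either a string module or a band module. Step 3: rule out band modules and finite-length requirements. A band module requires a cyclic band in $Q$, but $Q$ is the ``triangle tree'', which is a tree with triangles inserted; its only cycles are the length-three cycles of the triangles themselves, and these are killed by the relations (any two consecutive arrows in a triangle compose to zero), so there are no bands. Hence every indecomposable summand is a string module, and finitely many of them appear because $M$ is finitely generated.

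The step I expect to be the main obstacle is making the passage to the infinite setting fully rigorous: Butler--Ringel is stated for finite-dimensional algebras, and here both $\LL$ and the individual string modules can be infinite-dimensional, so one cannot directly cite it. The fix I would pursue is to present $\LL$ as $\varinjlim \LL_F$ over finite connected subquivers $F$ containing no broken triangles, observe that any finitely generated $\LL$-module $M$ with generators $m_1,\dots,m_k$ is already generated over the subalgebra $\LL_{F_0}$ for a suitable finite $F_0$ together with the ideals generated by arrows leaving $F_0$, and then note that the representation-theoretic support of $M$ is a union of the finite support of the $\LL_{F_0}$-submodule generated by the $m_i$ with at most finitely many half-infinite ``arms'' running outward along $Q$; along each arm the relations force the representation to look eventually like a string (a single arrow pointing inward at each vertex), giving an infinite-dimensional but still string-type tail. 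Assembling these finitely many tails with the finite core yields a finite direct sum of string modules. I would isolate this ``finitely many infinite arms'' claim as the technical heart of the proof and verify it by a direct analysis of how $\LL$-multiplication propagates a generator outward through the triangle tree, using at each step that the two arrows of a triangle compose to zero so that no branching or cycling can occur.
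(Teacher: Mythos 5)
Your overall shape (restrict to finite subquivers where Butler--Ringel applies, use the tree-of-triangles structure and the zero relations to see that nothing branches or cycles outward, then handle the infinite tails) is close in spirit to the paper's argument, but the proposal has a genuine gap exactly at the step you flag as the ``technical heart,'' and the mechanism you would need there is missing. The problem is not showing that the module vanishes off finitely many outward arms and that the structure maps along each arm are surjective -- that is the paper's Claim~1 and your ``propagation'' analysis would give it. The problem is the assembly: a decomposition of the restriction $M|_{Q'}$ into string modules need \emph{not} extend to a decomposition of $M$. For instance, if $M\cong S\oplus R$ where $S$ is a simple at a boundary vertex $w$ and $R$ is an outward ray passing through $w$, then $M_w$ is two-dimensional and most choices of a one-dimensional splitting of $M_w$ coming from a string decomposition of the restriction generate submodules of $M$ that intersect nontrivially further out, so they do not glue to summands of $M$. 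The paper resolves precisely this: it enlarges the core and chooses it so that the total boundary rank $d$ is \emph{minimal}, and then proves (Claim~2) that one layer further out all maps $M_v\to M_w$ are isomorphisms; only because of this stabilization can the one-dimensional decomposition at the boundary be transported outward uniquely and consistently, layer by layer. Your sketch asserts ``assembling these finitely many tails with the finite core yields a finite direct sum of string modules'' without identifying this stabilization (equivalently: dimensions along each arm are non-increasing, hence eventually constant, hence the maps are eventually isomorphisms, and the core must be enlarged past that point before decomposing), so as written the assembly step would fail.

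Two secondary issues: your Step~1 (Krull--Schmidt for finitely generated, possibly infinite-dimensional $\LL$-modules via semiperfectness of $\End_\LL(M)$) is circular -- you justify locality of the endomorphism rings of the summands by assuming the summands are string modules, which is what is to be proved -- and your Step~2 invokes a string/band classification of \emph{infinite-dimensional} indecomposables over an infinite-quiver string algebra that is not available off the shelf. The paper sidesteps both: it never decomposes $M$ into indecomposables directly, only its finite-dimensional restrictions (where Butler--Ringel genuinely applies), and it extends that decomposition outward by the isomorphism argument above; band modules never need to be discussed. If you add the stabilization lemma (minimal $d$, maps eventually isomorphisms) and replace Steps~1--2 by decomposition of restrictions, your outline becomes essentially the paper's proof.
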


\begin{proof}
Choose a subquiver $Q'$ of $Q$ which is a connected union of triangles containing the supports of the generators of a f.g. module $M$. Let $I'$ be the ideal in $KQ'$ generated by the relations in $Q$ which are supported on $Q'$. Then $KQ'/I'$ is a string algebra by \cite{BR87}. So, the restriction $M'=M|Q'$ of $M$ to $Q'$ is a direct sum of finitely many string modules. We will say that a vertex of $Q'$ is a \emph{boundary point} if it lies in only one triangle of $Q'$. Let $Q''$ be obtained from $Q'$ by adding one triangle $u\to v\to w\to u$ to each boundary point $v$ of $Q'$. Let $M''=M|Q''$.

Claim 1. For each triangle $u\to v\to w\to u$ in $Q''$ with only one vertex $v$ in $Q'$, we have $M_u=0$ and the linear map $M_v\to M_w$ is an epimorphism.

Proof: $M$ is generated by elements of $KQ'$ but there are no paths from $Q'$ to $u$. Also, any element in the cokernel of $M_v\to M_w$ cannot be in the image in $M$ of a projective module with top in $Q'$. This proves the claim.

Let $d$ be the sum of the ranks of the maps $M_v\to M_w$ from the boundary points of $Q'$ to boundary points of $Q''$. Choose the pair $Q',Q''$ so that $d$ is minimal. Note that $d$ is the sum of the dimensions of $M_w$ over all boundary points $w$ of $Q''$. Let $Q'''$ be the subquiver of $Q$ obtained by adding triangles to the boundary points of $Q''$.

Claim 2. Let $u\to v\to w\to u$ be a triangle in $Q'''$ where $v$ is a boundary point of $Q''$. Then $M_v\to M_w$ is an isomorphism.

Proof: By Claim 1, the map $M_v\to M_w$ is surjective and $M_u=0$. So, the sum of the dimensions of $M_w$ is $\le d$. My minimality of $d$ we must have equality and the map must be an isomorphism as claimed.

Now take any decomposition of $M''$ into string modules. This gives a decomposition of $M_v$ into one dimensional subspaces for every boundary point $v$ of $Q''$. By Claim 2 we have isomorphisms $M_v\cong M_w$ which we use to give a compatible decomposition of $M_w$ into one dimensional summands. By repeating this process for larger and larger subquivers we obtain a decomposition of the infinite module $M$ into a direct sum of string modules as claimed.
\end{proof}

As observed in Claim 2 in the proof above, the infinite tails of finitely generated string modules over $\LL$ must be eventually oriented outward (towards the infinite end).

Let $mod\-\LL$ be the category of finitely generated modules over $\LL$ and let $f\el mod\-\LL$ denote the full subcategory of modules of finite length. Then $f\el mod\-\LL\cong f\el mod\-\hat\LL$ where $\hat\LL=\lim \LL/r^n\LL$ is the \emph{completion} of $\LL$. For any $\LL$-module $M$ let $FM$ be given by
\[
	FM=\lim_\to \Hom_\LL(\LL/r^n\LL,M)
\]
Viewing $\LL/r^n\LL$ as an $\hat\LL\-\LL$-bimodule we see that this gives a functor $F:mod\-\LL\to mod\-\hat\LL$.

\begin{lem}
$F$ is a left exact functor which vanishes on projective modules. Furthermore $FM$ has finite length for all f.g. $M$ and $FW=W$ for all modules $W$ of finite length.
\end{lem}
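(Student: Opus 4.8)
The plan is to verify each of the four assertions separately, working from the definition $FM=\lim_\to\Hom_\LL(\LL/r^n\LL,M)$, and to keep in mind throughout that $\LL$ is a string algebra without unit, so every f.g.\ module is supported on a finite connected subquiver $Q'$ of $Q$ and, by the proposition above, decomposes as a finite direct sum of (possibly infinite-dimensional) string modules whose infinite tails are eventually oriented outward. First, for left exactness: $\Hom_\LL(\LL/r^n\LL,-)$ is left exact for each fixed $n$, and a filtered colimit of left exact functors is left exact because filtered colimits are exact in the category of $K$-vector spaces; so $F$ is left exact. (One checks the colimit is filtered because the surjections $\LL/r^{n+1}\LL\onto\LL/r^n\LL$ induce a directed system.) Second, vanishing on projectives: a projective $\LL$-module is a sum of indecomposable projectives $P_v=\LL e_v$, and $\Hom_\LL(\LL/r^n\LL,P_v)$ picks out the elements of $P_v$ annihilated by $r^n$; but $P_v$, being the string module for a string that runs outward to infinity, has the property that its socle filtration has no bounded piece -- equivalently, $P_v$ has no nonzero element killed by a fixed power of $r$, so $\Hom_\LL(\LL/r^n\LL,P_v)=0$ for all $n$ and hence $FP_v=0$. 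I would phrase this precisely as: every nonzero submodule of $P_v$ is infinite-dimensional (again because the tail of $P_v$ is oriented outward), so $P_v$ has no finite-length submodule other than $0$, which gives $FP_v=0$.

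Third, $FW=W$ for $W$ of finite length: if $W$ has finite length it is killed by some power $r^{n_0}$ of the radical (since $W$ is a finite iterated extension of simples $S_v=P_v/rP_v$, each killed by $r$), hence the natural map $\Hom_\LL(\LL/r^{n}\LL,W)\to W$, $\f\mapsto \f(1+r^n\LL)$, is an isomorphism for all $n\ge n_0$; passing to the colimit gives $FW\cong W$ naturally. Here one should note that $\LL/r^n\LL$ has a unit even though $\LL$ does not, which is what makes $\Hom_\LL(\LL/r^n\LL,W)$ behave like evaluation at $1$; the finitely many idempotents $e_v$ with $e_vW\neq 0$ assemble into the unit of the relevant finite-dimensional quotient. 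Fourth, and this is the step I expect to be the main obstacle: showing $FM$ has finite length for arbitrary f.g.\ $M$. The point is that $FM$ is the ``largest finite-length-ish part'' of $M$ in a completed sense, and one must rule out an infinite-dimensional answer. Using the string decomposition $M=\bigoplus_{i} N_i$ with each $N_i$ a string module with eventually-outward tails, I would argue that $\Hom_\LL(\LL/r^n\LL,N_i)$ stabilizes: an element of $N_i$ killed by $r^n$ must lie in the ``inner'' finite part of the string (the part before the tails become outward-oriented), because once the arrows point outward, applying the path that far never dies, so no element beyond that finite part is annihilated by any power of $r$. Thus $FN_i$ is finite-dimensional, bounded by the length of the inner part of the string $N_i$, and since $M$ has only finitely many summands $N_i$, $FM=\bigoplus_i FN_i$ is finite-dimensional, hence of finite length over $\hat\LL$.

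The delicate quantifier in the last step is uniformity: I need that for a single f.g.\ $M$ there is one $n$ with $\Hom_\LL(\LL/r^n\LL,M)=\Hom_\LL(\LL/r^{n+1}\LL,M)=\cdots$, and this follows because $M$ is supported on a finite subquiver $Q'$ together with finitely many outward tails (by the remark following the previous proposition), so the ``inner part'' of $M$ is a finite-dimensional module killed by a fixed power of $r$, while the tails contribute nothing to $FM$. I would organize the write-up so that this structural description of f.g.\ modules -- finite core plus outward tails -- is stated once and then invoked for both the projective-vanishing and the finite-length claims, since the mechanism is the same: an outward-pointing infinite tail has no nonzero element annihilated by a power of the radical. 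With that observation in hand, all four assertions reduce to bookkeeping with the string combinatorics already established.
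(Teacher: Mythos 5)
Your proof is correct and rests on the same two structural facts as the paper's: exactness of the (sequential, hence filtered) colimit gives left exactness, and an eventually outward-pointing infinite tail contains no nonzero element killed by any power of $r$, which at once gives $FP=0$ and confines $FM$ to the finite core; $FW=W$ is the same evaluation argument in both (though note that $\LL/r^n\LL$ still has no unit --- only the local units $\sum e_v$ --- so evaluation must be at the finite sum of idempotents acting nontrivially on $W$, as you in effect say in the same sentence). Where you genuinely diverge is the finite-length claim: the paper takes a finitely generated string module $M$, forms the short exact sequence $0\to W\to M\to \bigoplus R(x_i)\to 0$ with $W$ the finite core and $R(x_i)$ the outward rays, and uses left exactness together with $F(R(x_i))=0$ to conclude $FM\cong FW=W$, whereas you compute the annihilator of $r^n$ directly inside each string summand of $M$. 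The mechanism is identical, but the paper's packaging buys the sharper statement $FM\cong W$ (the core itself), which is what later makes $F$ inverse to $G$ on the relevant subcategory, while your direct computation should make explicit the standard string-module fact that a path acts injectively (where nonzero) on the standard basis, so that an element with a nonzero component in the outward part of a string cannot be annihilated by $r^n$ even as part of a linear combination with core elements. Finally, your opening phrase ``every f.g.\ module is supported on a finite connected subquiver'' is not literally true (the outward tails have infinite support); what the preceding Proposition gives is that the generators are supported on a finite subquiver and that $M$ is a finite direct sum of string modules with eventually outward tails, which is the form of the statement you actually use.
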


\begin{proof}
Since $\Hom$ is left exact and direct limit is exact, it follows that $F$ is left exact. Since a projective module $P$ contains no nonzero submodule of finite length we have $\Hom(\LL/r^n\LL,P)=0$ for all $n$. So $FP=0$. Also, the last claim: $FW=W$ for modules of finite length is clear.

Now suppose that $M$ is finitely generated and not projective. Then $M$ has 0,1 or 2 infinite ends and each infinite end is an infinite directed path starting at a vertices $x_i$. Since $M$ is not projective, these vertices $x_i$ must be distinct. Let $R(x_i)$ be the infinite string modules supported on these infinite paths. Then we have a short exact sequence
\[
	0\to W\to M\to \oplus R(x_i)\to 0
\]
for some module $W$ of finite length.
Since $\Hom_\LL(\LL/r^n\LL,R(x_i))=0$ for all $n$, we get by left exactness of $F$ that $FM\cong FW$. But $FW=W$. So we are done.
\end{proof}

This lemma shows that $F$ induces a functor on the stable category
\[
	\ul F:\ul{mod\-\LL}\to f\el mod\-\hat\LL= f\el mod\- \LL
\]
which can be viewed as a retraction since it has a section $f\el mod\-\LL\into \ul{mod\-\LL}$. However, $\ul F$ is not an isomorphism since there is no stable morphism $M\to FM$ in general. However, we have the following observation.

\begin{prop}
Let $\cX$ be the full subcategory of the stable category $\ul{mod\-\LL}$ generated by all string modules with two infinite ends. Then $\ul F$ induces an isomorphism $\cX\cong f\el mod\-\LL$.
\end{prop}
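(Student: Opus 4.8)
The plan is to show that $\ul F$ restricts to a bijection on objects and on morphism sets between $\cX$ and $f\el mod\-\LL$, hence is an isomorphism of categories. Throughout I use that $F$ is simply the ``torsion submodule'' functor: $FM=\bigcup_n M[r^n]$ is the largest submodule of $M$ on which the radical acts nilpotently, so $\iota_M\colon FM\hookrightarrow M$ is a natural subobject inclusion, $F$ is left exact, $FP=0$ for projective $P$ (a projective has no nonzero $r$-nilpotent submodule), and $FW=W$ when $W$ has finite length.

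First, objects. If $M$ is an indecomposable f.g. string module with two infinite ends, the lemma preceding the proposition gives $0\to FM\to M\to R(x_1)\oplus R(x_2)\to 0$ with $FM=M(w_0)$ the finite ``core'' and $R(x_i)$ the two outward uniserial tails. Conversely, given a finite string module $W=M(w_0)$, I extend $w_0$ at each of its two ends by first adjoining a source vertex, using the unique arrow into the endpoint lying in the triangle not already occupied by $w_0$, and then the outward directed path starting there; by the local structure of $Q$ (each vertex lies in exactly two triangles and the relations are exactly the length-two subpaths of each triangle) this path is forced, so $\tilde w_0$, hence $\tilde W:=M(\tilde w_0)$, is canonically determined by $W$. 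One checks $\tilde W\in\cX$ is f.g. (its generators are the two new source vertices together with the internal peaks of $w_0$) and that $F\tilde W=W$: the two added source vertices carry infinite outward directed paths and so lie outside the $r$-nilpotent submodule, while everything in $w_0$ does not. The same forcing shows every $M\in\cX$ equals $\widetilde{FM}$, the two outer peaks and the arms hanging from them being the forced extension of the core. Thus $M\mapsto FM$ and $W\mapsto\tilde W$ are mutually inverse bijections on objects (a projective $P_v\in\cX$ corresponds to $FP_v=0$).

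Second, morphisms. Since $FM\hookrightarrow M$ is natural, restriction gives $\Hom_\LL(M,N)\to\Hom_\LL(FM,FN)$ (the image of the finite-length $FM$ lies in the largest $r$-nilpotent submodule $FN$ of $N$), and since $F$ kills projectives and $\Hom_\LL(FM,P)=0$ for projective $P$, this descends to $\ul\Hom_\LL(M,N)\to\Hom_\LL(FM,FN)=\Hom_{f\el}(\ul F M,\ul F N)$. For faithfulness: if $f\colon M\to N$ vanishes on $FM$ it factors through $M\twoheadrightarrow M/FM=R(p_1)\oplus R(p_2)$, so it is enough to show each composite $M\twoheadrightarrow R(p_i)\to N$ factors through a projective. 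A map $R(p_i)\to N$ is a sum of graph maps, each with finite image (hence factoring through $FN$) or identifying a tail of the arm $R(p_i)$ with part of an arm of $N$; in either case one factors $M\twoheadrightarrow R(p_i)\to N$ as $M\xrightarrow{\alpha}P_v\xrightarrow{\beta}N$ where $\alpha$ sends the peak $p_i$ not to a generator of $P_v$ but to the element one arrow below it. The core relation $z_{v_0}=c_0z_{p_i}$ of $M$ is then carried by $\alpha$ to $c_0$ applied to that element, which vanishes because the arrow of the arm entering $p_i$ lies in the same triangle of $Q$ as $c_0$; this makes $\alpha$ well defined with $\beta\alpha=f$. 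For fullness the same combinatorics shows any $g\colon FM=W\to FN=V$ extends, uniquely modulo maps through projectives, to $\tilde g\colon \tilde W=M\to N=\tilde V$: on the core one uses $g$, and the images of the peaks and arms are forced.

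The main obstacle is this faithfulness step. The naive lift of $M\twoheadrightarrow R(p_i)$ along the projective cover $P_{p_i}\twoheadrightarrow R(p_i)$ does \emph{not} exist: the ``diamond'' relations $z_{v_0}=c_0z_{p_i}=\alpha_1z_{v_1}$ that occur when the core has an internal peak obstruct it. One must instead go up an arm of $N$ to a vertex $v$ and use $P_v$, exploiting the vanishing of two-arrow compositions inside a triangle of $Q$ to annihilate the core contribution. Carrying this out uniformly over all graph maps, together with the parallel bookkeeping for fullness (existence of the forced extension $\tilde g$, and that two extensions differ by a morphism through a projective), is where the real work lies; once done, $\ul F\colon\cX\to f\el mod\-\LL$ is bijective on objects and on hom-sets, hence an isomorphism of categories.
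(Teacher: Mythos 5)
Your proposal is correct and follows essentially the same route as the paper: you invert $\ul F$ on objects by the unique two-infinite-ended extension $\tilde W=GW$ (forced by the string conditions together with $F\tilde W=W$), and you establish $\ul\Hom(M,N)\cong\Hom(FM,FN)$ by the same string-combinatorial analysis that the paper summarizes as ``examining all possible ways the strings might intersect,'' with your explicit factorizations through $P_v$ (using the triangle relations one step up an arm of $N$) and the forced extension of $g$ over the peaks simply filling in the details the paper leaves to the reader.
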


\begin{proof}
There is an inverse functor $G:f\el mod\-\LL\to \cX$ which is given by sending each string module of finite length $W$ to the unique string module $M$ having two infinite ends so that $FM=W$. The key point is that
\[
	\ul\Hom(M_1,M_2)\cong \Hom(W_1,W_2)
\]
which is verified by examining all possible ways that the stings might intersect. The intersection is again a string module since $Q$ is simply connected (when we fill in the triangles).
\end{proof}

\begin{thm}\label{thm: Q/T = fl mod L}
The abelian category $\cX/\cT$ is isomorphic to $f\el mod\-\LL$.
\end{thm}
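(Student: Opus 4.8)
The plan is to construct an explicit equivalence of categories by matching indecomposable objects with their combinatorial shadows on both sides, and then checking that Hom-spaces are preserved. The two sides have been set up so that this is almost bookkeeping: an indecomposable object $X\in\cX/\cT$ has a finite support $\mathrm{supp}(X)\subseteq\cT$, and the associated ``walk'' from Proposition \ref{T-approximation of objects of X} is a finite walk in $\cT$ through exactly the objects of $\mathrm{supp}(X)$, all of whose steps go up and to the left with arrows pointing up and right (Lemma \ref{lem:properties of minimal walk}). On the other side, a finite-length string module over $\LL$ is precisely a finite walk in the quiver $Q$ subject to the string-algebra conditions coming from \cite{BR87}. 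The first thing I would do is make the identification of these two combinatorial data structures precise: the vertices of $Q$ are the objects of $\cT$, the arrows of $Q$ are the irreducible maps between objects of $\cT$ (there are exactly four at every vertex, two ``in'' and two ``out,'' realizing the trivalent-tree-of-triangles structure), and the relation ``composition of two arrows in the same triangle is zero'' corresponds exactly to the degeneracy phenomenon ruled out in the ``minimal walk is nondegenerate'' lemma. Thus a finite minimal walk in $\cT$ is the same thing as a reduced string (no substring $\a\a^{-1}$, $\a^{-1}\a$, and no relation) in $Q$.

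Next I would define the functor $\Phi:\cX/\cT\to f\el\,mod\text{-}\LL$ on indecomposables by sending $X=M(x,y)$ (not in $\cT$) to the string module determined by the walk associated to $X$ in Proposition \ref{T-approximation of objects of X}, and sending objects of $\cT$ to the corresponding simple modules $S_v$ (so $\Phi$ kills $add\,\cT$, which is forced since we pass to the quotient). One should check that every string module of finite length arises this way: given a finite reduced string $w$ in $Q$ whose steps all have the up-left/arrows-up-right geometry, I would show it is realized by the walk between the two endpoints $Y=(x,b)$ and $Z=(a,y)$ of a rectangle, and then $X=M(x,y)$ maps to it — this uses the last statement of Lemma \ref{lem:properties of minimal walk} that a minimal walk passes through every cluster object in its rectangle, plus the arithmetic that any finite pattern of the coordinates $m\pi/2^n$ occurring on a walk can be completed to a legal rectangle corner. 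Surjectivity on isomorphism classes then reduces to the fact, noted just after the proposition in the excerpt, that finite-length string modules over $\LL$ have their (here absent) infinite tails oriented outward — i.e. exactly the strings with no infinite inward tail, which for finite strings is vacuous, so every finite string occurs.

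The heart of the argument is that $\Phi$ is fully faithful, i.e. $\Hom_{\cX/\cT}(X,Y)\cong\Hom_\LL(\Phi X,\Phi Y)$. I would prove this using the four-term exact sequence
\[
	0\to \Hom_0(S,T_1^X)\to \Hom_{\cC/\cT}(\t^{-1}S,X)\xrarrow{\f} \Hom_{\cC/\cT}(\rt S,X)\to \Hom_0(S,X)\to 0
\]
together with Corollary \ref{cor: f is mono if it is mono on supports} and Proposition \ref{mono and epi iff iso on support}, which together say that morphisms in $\cX/\cT$ are detected, and an isomorphism characterized, by their effect on the support functors $\Hom_{\cC/\cT}(\t^{-1}S,-)$. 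The object $\Hom_{\cC/\cT}(\t^{-1}S,X)$ is one-dimensional exactly when $S$ lies on the walk of $X$ but is not an endpoint, i.e. exactly when $v$ is a vertex of the string $\Phi X$ and not a string-endpoint; the maps $\f$ record the arrow structure of the string. On the module side, $\Hom_\LL(\Phi X,\Phi Y)$ is spanned by ``graph maps'' (quotient of $\Phi X$ onto a substring, then inclusion into $\Phi Y$ as a substring), and because $Q$ is simply connected after filling in triangles, two strings intersect in at most one substring; this is the same computation already carried out in the proof of the Proposition identifying $\cX$ with $f\el\,mod\text{-}\LL$ inside $\ul{mod\text{-}\LL}$. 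So I would make the dictionary: a graph map $\Phi X\to\Phi Y$ corresponds to a ``string overlap'' of the walks of $X$ and $Y$, which corresponds to a nonzero class in $\Hom_{\cC/\cT}(X,Y)$ detected on the common part of the support; bijectivity of this dictionary is checked by running the same overlap analysis on both sides and matching dimensions via the four-term sequence. Finally, since both $\cX/\cT$ (by the previous theorem) and $f\el\,mod\text{-}\LL$ are abelian and $\Phi$ is an additive fully faithful functor essentially surjective on objects, $\Phi$ is an equivalence of abelian categories.

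**The main obstacle** I anticipate is verifying that $\Phi$ respects composition of morphisms, not merely Hom-space dimensions: a composite of two graph maps of strings can, in the string-combinatorics world, be zero or nonzero depending on how the overlaps interact, and one must check this matches the behavior of composing the corresponding morphisms in $\cC/\cT$ (where a composite is zero iff it factors through $add\,\cT$, i.e. iff the combined overlap collapses onto the cluster). Equivalently, one must verify that the basic morphisms $B_i\to X$ and the structure maps of the approximation $T_0^X\to X$ transport under $\Phi$ to the canonical surjections and inclusions defining graph maps; this is the point where the explicit up-left geometry of the walks and the clockwise orientation of the triangles in the potential have to be used to fix signs and directions consistently. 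Once the functor is shown to be well-defined and compatible with composition, full faithfulness and essential surjectivity are the comparatively routine steps sketched above.
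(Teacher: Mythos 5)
Your overall skeleton (a bijection on indecomposables via walks/supports, comparison of Hom spaces using that $Q$ is simply connected, then compatibility with composition) is the same as the paper's, but the way you set up the functor contains a concrete error. You cannot send an object $T_v\in\cT$ to the simple module $S_v$: in $\cX/\cT$ the identity of $T_v$ factors through $\cT$, so $T_v$ is a zero object and any additive functor out of the quotient must send it to $0$ --- which contradicts your own parenthetical that $\Phi$ ``kills $add\,\cT$.'' In the correct dictionary the simple $S_v$ is the image of a \emph{non-cluster} object, namely the unique $X$ whose support is $\{T_v\}$ (for $T_v=M(0,0)$ this is $X=M(\pi/2,\pi/2)$); the cluster objects themselves correspond, in the infinite picture of Section 3.2, to the projective string modules, which have infinite length and so do not appear on the module side at all. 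Relatedly, your assertion that the walk of Proposition \ref{T-approximation of objects of X} passes ``through exactly the objects of $\mathrm{supp}(X)$'' is not quite right: the walk also contains the two endpoint sinks $Y=(x,b)$ and $Z=(a,y)$, which sit on the boundary of the rectangle and are \emph{not} in the support. The string module attached to $X$ must be supported on the walk \emph{minus} these two endpoints --- this is exactly how Claim 1 of the paper's proof sets up the bijection, via the corners $T_a=M(a_1,a_2)$, $T_b=M(b_1,b_2)$ of the rectangle. With your ``full walk'' reading the object correspondence is shifted by one layer: no object of $\cX/\cT$ would hit the simples, and the bijection on indecomposables fails.

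On the remaining steps you and the paper diverge in method, and your version stays at the level of a plan exactly where the work is. The paper does not use the four-term sequence or the support-detection results for full faithfulness; since every Hom space in sight is at most one-dimensional, it argues directly (its Claim 3) that a nonzero graph map $W_1\to W_2$ forces $X_1$ to lie inside the rectangle of $X_2$, above and to the right of its zig-zag, and conversely --- your support-functor bookkeeping could likely be made to work, but this geometric overlap analysis is the actual content and you only assert it. Finally, the composition compatibility you flag as the main obstacle is settled in the paper rather cheaply: every nonzero morphism on either side is a scalar multiple of a basic (graph) morphism, and on both sides the composite of two basic morphisms is the basic morphism when one exists and zero otherwise (in $\cX/\cT$, zero exactly when the composite factors through $\cT$, matching the collapse of the overlap onto the cluster on the string side). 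So after repairing the object-level issues above, you should replace the dimension-matching sketch by the explicit overlap argument and then invoke this one-dimensionality observation to get compatibility with composition.
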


\begin{proof}
\ul{Claim 1}. There is a bijection between the indecomposable objects of the two categories. 

Take any string module $W$ of finite length $n$. Then $W$ has support $v_1,\cdots,v_n$. We have the corresponding string module $GW=M$ which has two infinite ends starting at two distinct vertices $a,b$. These are two sources in the support of $M$. The support of $W$ gives a sequence of vertices connecting $a$ and $b$ by a zig-zag path:
\[
	\infty\- \text{end}\ot a\to v_1-v_2 \cdots v_n\ot b\to \infty\-\text{end}
\]
These correspond to objects $T_a,T_{v_i},T_b$ of $\cT$ which map nontrivially to each other in the opposite direction:
\begin{equation}\label{zig-zag support of M}
	T_a\ot T_{v_1}\cdots T_{v_n}\to T_b
\end{equation}

Since $Q$ is simply connected, $\Hom(T_a,T_b)=0=\Hom(T_b,T_a)$. In terms of coordinates, $T_a=(a_1,a_2), T_b=(b_1,b_2)$ where by symmetry we may assume $a_1<b_1,a_2>b_2$.

\ul{Claim 2}. There are no other objects of $\cT$ in the rectangle $[a_1,b_1]\times [b_2,a_2]$ except for the objects $T_a,T_b$ and $T_{v_i}$.

The reason is that each arrow in the diagram (\ref{zig-zag support of M}) is irreducible and we know that all of the objects of $\cT$ in the rectangle $[a_1,b_1]\times [b_2,a_2]$ are connected by a sequence of irreducible maps. These represent two walks in the quiver $Q$ from $a$ to $b$ which do not go through any zero relation. But such a walk is uniquely determined by its end points. So, they are equal. This proves Claim 2.

The indecomposable object of $\cX/\cT$ associated to $W$ is $X=M(b_1,a_2)$. This is the universal object of $\cX$ to which all of the objects $T_a,T_b,T_{v_i}$ map nontrivially.

Conversely, take any indecomposable object $X=M(x,y)$ in $\cX$ which is not in $\cT$. Let $a_2=y$, $b_1=x$ and let $a_1$ be the largest real number $<x$ so that $M(a_1,a_2)\in\cT$ and let $b_1$ be the largest real number $<y$ so that $M(b_1,b_2)\in\cT$. Then the objects of $\cT$ in the rectangle $[a_1,b_1]\times [b_2,a_2]$ excluding the corners $T_a=M(a_1,a_2)$ and $T_b=M(b_1,b_2)$ form the support of a string module $W$ of finite length which will give back $X$ by the above construction. Thus this describes the inverse process and gives the desired bijection concluding the proof of Claim 1.

Define a \emph{standard string module} to be one for which, at each vertex in the support we have $K$ and for each arrow we have the identity map $K\to K$. A \emph{basic morphism} between string modules is one which is the identity map on $K$ on each vertex in the intersection of supports. Since $Q$ is simply connected, any nonzero morphism between standard string modules is a scalar multiple of a basic morphism and any composition of basic morphisms $X\to Y, Y\to Z$ is either a basic morphism $X\to Z$ if there is one or zero if there is no basic morphism $X\to Z$.

\ul{Claim 3}. Suppose that $W_1,W_2$ are standard string modules of finite length and $X_1,X_2$ are the corresponding objects of $\cX/\cT$. Then $\Hom_\LL(W_1,W_2)=\Hom_{\cX/\cT}(X_1,X_2)$. 

Since each side is at most one dimensional it suffices to show that a nonzero morphism on one side implies the existence of a nonzero morphism on the other. Suppose that $\Hom_\LL(W_1,W_2)=K$. Then the supports of $W_1,W_2$ must intersect in another string module $W_3$. Let $v,v'$ be the endpoints of the support of $W_3$. Take $v$. Either $v$ is an endpoint of the support of $W_1$ or there is another vertex $w$ in the support of $W_1$ and an arrow $v\to w$. This gives an irreducible map $T_w\to T_v$ in $\cT$. In $M_2$ there must be a vertex $a$ and an arrow $a\to v$ giving an irreducible map $T_v\to T_a$. This means that
\[
	T_w\to T_v\to T_a\to T_w'
\]
is a distinguished triangle where $T_w'=T_w[1]$ is $T_w$ with the opposite parity. By symmetry we may assume that this is a negative triangle so that $T_w,T_v$ have the same $y$-coordinate and $T_v,T_a$ have the same $x$-coordinate. Then all of the other vertices in the part of the support of $W_1$ in the complement of the support of $W_3$ and containing $w$ will lie to the north-west of $w$. Therefore, they lie to the south-west of $a$ and that makes the top of the rectangle for $X_1$ lie below the top of the rectangle for $X_2$ but above the point $v$. If $v$ is an endpoint of $W_1$ but not of $W_2$ then the point $a$ is the new source of $M_1$, assuming by symmetry that $T_v,T_a$ have the same $x$ coordinate, we conclude that the top of the rectangle for $X_1$ is at the $y$-coordinate of the point $a$ which is at or below the top of the rectangle for $X_2$.

A similar argument at the other endpoint of $W_3$ tells us that the right side of the rectangle for $X_1$ lies to the left or is equal to the right side of the rectangle for $X_2$. Therefore, $X_1$ lies inside the rectangle for $X_2$ and lies in the upper right side of the zig-zag created by the objects of $\cT$ going from lower right to upper left in the $X_2$-rectangle. Therefore, $\Hom_{\cX/\cT}(X_1,X_2)=K$ since $X_1$ is below and to the left of $X_2$ and there are no objects of $\cT$ between $X_1$ and $X_2$.

Finally, suppose that $\Hom_{\cX/\cT}(X_1,X_2)=K$. Then $X_1$ must be below and to the left of $X_2$. It must also be above and to the right of the zig-zag of objects of $\cT$ going from the lower right to upper left in the $X_2$ rectangle since these are all the points which map to $X_2$ in the category $\cX/\cT$. The construction of the string module $W_1$ consists of going to the left and down from $X_1$ to give another rectangle. The horizontal line through $X_1$ either meets an object of $\cT$ in the $X_2$ rectangle or it goes to an object $T_v$ outside and to the left of this rectangle. But in that case, at the object $T_w$ where the two zig-zags meet, we have a morphism into $T_w$ from the support of $X_1$ which is an arrow away from $w$ in the support of $W_1$ which means that $W_1$ maps to $W_2$. (We need to do the same analysis at the other end to confirm this.) Therefore, $\Hom_\LL(W_1,W_2)=K$ as claimed. This proves Claim 3. The theorem follows since composition of basic morphisms is given by the same rule in both categories. (The composition of basic morphisms is a basic morphism if one exists.)
\end{proof}

\subsection{Infinitely generated modules}

The next theorem identifies the abelian category $\cC/\cT$ as a category of infinitely generated modules over $\LL$. Let $Mod$-$\LL$ denote the category of all locally finite $\LL$-modules. These are the $\LL$-modules $M$ so that $M_v$ is finite dimensional for all vertices $v\in Q_0$.

Let $Rep_+\LL$ denote the additive full subcategory of $Mod$-$\LL$  generated by all string module $M$ having two infinite ends so that in each end, the arrows are either eventually all pointing outward (as when $M$ is projective) or have an infinite number of arrows going inward and outward. The disallowed ends, the ones where the arrows are eventually all pointing inward, will be called ``injective ends'' or ends of ``injective type.''

Let $Rep_0\LL$ denote the additive full subcategory of $Mod$-$\LL$ category of string modules of $\LL$ with either zero, one or two infinite ends so that each infinite end has an infinite number of arrows in each direction. In other words, the direction of the arrows keeps switching back and forth.

For such a string module $W$ in $Rep_0\LL$ let $GW=M$ be the module with two infinite ends where we do the previous construction on any finite end. Thus, at each finite end $v$ we attach the unique vertex $w$ so that the triangle containing $v$ and $w$ meets the support of $W$ only at $v$. Then we add the ray starting at $w$ and going the other way to get $M$. As before, this has the property that
\[
	FM=\lim_\to \Hom_\LL(\LL/r^n\LL,M)=W
\]

\begin{prop}\label{prop: description of modules M=GW}
A string module $M$ has the form $M=GW$ for some $W\in Rep_0\LL$ if and only if $M$ is not projective and if $M$ has two infinite ends which are not of injective type.\qed
\end{prop}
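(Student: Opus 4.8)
The statement to prove is a characterization of string modules of the form $M=GW$ for $W\in Rep_0\LL$: these are exactly the non-projective string modules with two infinite ends, neither of injective type. The plan is to argue both implications by direct inspection of the construction $G$ and the structure of the quiver $Q$, using simple connectivity of $Q$ (with triangles filled in) throughout.

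For the forward direction, suppose $M=GW$ with $W\in Rep_0\LL$. By definition of $G$, the module $M$ is built from $W$ by attaching, at each finite end $v$ of $W$, the unique vertex $w$ so that the triangle containing $v,w$ meets $W$ only at $v$, and then extending by an infinite ray starting at $w$ going away from $W$. First I would check that $M$ has exactly two infinite ends: $W$ has at most two infinite ends and at most two finite ends, and after the $G$-construction every end becomes infinite, so $M$ has exactly two infinite ends (a string module over $Q$ has at most two ends total since $Q$ is a tree). Next, the finite-end rays attached by $G$ are attached so that the first arrow points \emph{into} the attached vertex $w$ or \emph{out} of it — one must verify from the explicit recipe (add the triangle that touches $W$ only at $v$, then the ray off $w$) that the attached ray is \emph{not} eventually inward; in fact the ray off $w$ should be oriented so that the string continues consistently, producing an end that (by the relation that composites within a triangle vanish) alternates direction infinitely often or is eventually outward. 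Either way it is not of injective type. For the ends of $W$ that were already infinite, $W\in Rep_0\LL$ guarantees these alternate direction infinitely often, hence are neither injective-type nor eventually-outward, so a fortiori not injective-type. Finally, $M$ is not projective: a projective string module has both ends eventually outward, but an end of $M$ coming from an infinite end of $W$ has infinitely many inward arrows, so $M$ is not projective.

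For the converse, suppose $M$ is a non-projective string module with two infinite ends, neither of injective type. I would define $W$ as the submodule (or the string) obtained by applying the functor $F$ of the earlier lemma: $W=FM=\varinjlim\Hom_\LL(\LL/r^n\LL,M)$, equivalently by truncating each infinite end of $M$ at the appropriate vertex. The key points to check are: (i) $W$ is again a string module (its support is the sub-zig-zag of $M$ obtained by deleting the two infinite tails, and this is connected and contains no zero relation since $M$ had none); (ii) each end of $W$ that becomes finite is a finite end of the type that $G$ can reconstruct — i.e. the vertex where we truncated is exactly the vertex $w$ in a triangle $u\to v\to w\to u$ with $v$ the (new) boundary vertex, matching the recipe for $G$; this requires tracing how an infinite end of $M$ which is not of injective type and not eventually outward must, near its ``entry point'' into the finite part, look locally like the configuration produced by $G$, and here one uses that $M$ is not projective to rule out the eventually-outward case and that the end is not injective-type to rule out the eventually-inward case; (iii) $W\in Rep_0\LL$, i.e. every remaining infinite end of $W$ alternates infinitely often — but $W$ has no infinite ends at all after truncation, so this is vacuous, and one only needs $W$ to have zero, one, or two finite ends of the allowed type. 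Then $GW=M$ by the displayed identity $FM=W$ together with the fact (from the earlier proposition) that $G$ is inverse to $F$ on the relevant subcategories, or more directly by checking that reattaching the standard rays via $G$ recovers the two original infinite ends of $M$ (here one uses that the original ends of $M$, not being injective-type, are uniquely determined by their truncation point and the requirement of being in $Rep_+$-type, which is exactly what $G$ produces).

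The main obstacle I expect is step (ii) of the converse: verifying that the local picture of a non-injective-type infinite end of $M$ at the point where it meets the ``finite core'' is \emph{exactly} the configuration that $G$ produces from a finite end, with no ambiguity. This is where the precise orientation conventions in $Q$ (all triangles clockwise, composites in a triangle zero) and the exclusion of both the projective case and the injective-type case must be combined carefully; it is essentially a finite case check at a single triangle, but getting the bookkeeping of arrow directions right is the delicate part. Everything else — counting ends, the non-projectivity argument, and invoking $FM=W$ — is routine given the earlier lemmas.
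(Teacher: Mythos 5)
The paper gives no argument for this proposition at all (the \qed is in the statement), treating it as immediate from the definitions of $F$ and $G$; your overall strategy — read off the forward direction from the construction of $G$, and recover $W$ as $FM$ for the converse — is the right one, but your converse rests on a misreading of what $F$ does to the ends of $M$. The functor $F$ extracts the locally radical-nilpotent part: on a string module it deletes exactly the maximal outward-directed rays (an element on an outward ray has an infinite directed path, so no power of the radical kills it), while an infinite end whose arrows switch direction infinitely often is left completely intact, since every element there dies after finitely many steps. Hence $W=FM$ does in general have infinite ends — precisely the switching ends of $M$ — and these are exactly what the defining condition of $Rep_0\LL$ is about. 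Your claim in step (iii) that ``$W$ has no infinite ends at all after truncation'' is therefore false, and step (ii) is backwards: the ends of $M$ that are truncated by $F$ and re-attached by $G$ are the eventually-outward ones, not the switching ones, and non-projectivity does not ``rule out the eventually-outward case'': $GW$ for $W$ of finite length has both ends eventually outward and is not projective. With your reading, $GW$ would carry an attached outward ray where $M$ actually has a switching end, so you would not get $GW=M$. The correct bookkeeping is: cut each eventually-outward tail of $M$ at its last source; the deleted vertex and ray are exactly those prescribed by the recipe for $G$ (the source $w$ with its arrow into the new boundary vertex and the outward ray beyond it); the switching ends survive in $W$ untouched, and the absence of injective-type ends (eventually-inward ends would survive into $W$ as well) is what guarantees $W\in Rep_0\LL$.

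A second, smaller gap is your non-projectivity argument in the forward direction: you argue via an infinite end of $W$ having infinitely many inward arrows, but $W\in Rep_0\LL$ is allowed to have finite length (zero infinite ends), in which case both ends of $GW$ are attached outward rays and your argument says nothing. The quick fix is to invoke the earlier lemma: $F(GW)=W\neq 0$ while $FP=0$ for every projective; alternatively, note that $GW$ has two sources (the two attached vertices) and at least one sink, whereas a projective string module has a unique source and no sinks.
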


Let $v$ be a vertex in the support of $M=GW$ and suppose that $T_v=M(a,b)$ is the corresponding object of $\cT$. Suppose by symmetry that $b\ge a$. Also, suppose that $b-a\ge0$ is minimal among all the vertices in the support of $M$. Then $a\le b<a+\pi$ and
\[
	\th=a+\pi-b=\frac{\pi}{2^n}
\]
for some nonnegative integer $n$. Furthermore, $a$ and $b$ are both integer multiples of $\th$. The cluster $\cT$ contains two other points $T_{v0}=M(a-\th/2,b)$ and $T_{v1}=M(a,b+\th/2)$ and there are irreducible morphisms $T_{v0}\to T_v\to T_{v1}\to T_{v0}'$ forming a distinguished triangle. Thus the vertices $v0,v,v1$ form a triangle in $Q$ with arrows going the other way: $v0\ot v\ot v1\ot v0$.

Since $M$ is a string module containing $v$ in its support and having two infinite ends, the support of $M$ contains exactly one of $v0,v1$. Continuing in the same direction, we have a sequence of 0's and 1's:
\[
	vd_1d_2d_3\cdots
\]
where $d_2=0$ if the next vertex $vd_10$ points away from $vd_1$.

\begin{lem}
The object in $\cT$ corresponding to $vd_1d_2\cdots d_m$ has coordinates $a_m,b_m$ where
\[
	b_m=b+\sum_{i=1}^m d_i\th/2^i,\quad a_m=b_m-\pi+\th/2^m
\]
\end{lem}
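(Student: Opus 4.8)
The plan is to prove this by induction on $m$, tracking how the coordinates change as we add one digit $d_{m+1}\in\{0,1\}$ to the string $vd_1\cdots d_m$. The base case $m=0$ is the setup already given: the vertex $v$ corresponds to $T_v=M(a,b)$ with $b-a$ minimal, $\th=a+\pi-b=\pi/2^n$, and $a,b$ both integer multiples of $\th$. The formula then reads $b_0=b$ and $a_0=b-\pi+\th$, which is indeed $a$, so the base case holds.

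For the inductive step, suppose the vertex $w=vd_1\cdots d_m$ corresponds to $T_w=M(a_m,b_m)$ with $b_m=b+\sum_{i=1}^m d_i\th/2^i$ and $a_m=b_m-\pi+\th/2^m$. I would first check that at this stage $b_m-a_m=\pi-\th/2^m$, so that the analogue of the parameter $\th$ at the vertex $w$, namely $a_m+\pi-b_m$, equals $\th/2^m=\pi/2^{n+m}$. This is the key bookkeeping fact: the ``mesh parameter'' is halved at each step of the walk. Now, exactly as in the description of the triangle at $v$, the cluster $\cT$ contains the two neighbors of $T_w$ obtained by the octagon/mutation rule: $T_{w0}=M(a_m-\frac{\th/2^m}{2},\,b_m)=M(a_m-\th/2^{m+1},b_m)$ and $T_{w1}=M(a_m,\,b_m+\th/2^{m+1})$, with irreducible morphisms $T_{w0}\to T_w\to T_{w1}\to T_{w0}'$ forming a distinguished triangle, so $w0,w,w1$ form a triangle in $Q$ with arrows $w0\ot w\ot w1\ot w0$. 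The support of $M$ contains exactly one of $w0,w1$; the string continues in the ``same direction,'' which by the indexing convention is precisely the direction recorded by the digit $d_{m+1}$.

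It then remains to verify the two cases. If $d_{m+1}=0$, the new vertex is $w0$ with coordinates $(a_m-\th/2^{m+1},b_m)$, so $b_{m+1}=b_m=b_m+0\cdot\th/2^{m+1}$ and $a_{m+1}=a_m-\th/2^{m+1}=(b_m-\pi+\th/2^m)-\th/2^{m+1}=b_{m+1}-\pi+\th/2^{m+1}$, matching both displayed formulas with $d_{m+1}=0$. If $d_{m+1}=1$, the new vertex is $w1$ with coordinates $(a_m,b_m+\th/2^{m+1})$, so $b_{m+1}=b_m+\th/2^{m+1}$ and $a_{m+1}=a_m=(b_m-\pi+\th/2^m)=(b_{m+1}-\th/2^{m+1})-\pi+\th/2^{m+1}\cdot 2-\th/2^{m+1}$; more cleanly, $a_{m+1}=a_m=b_m-\pi+\th/2^m=b_{m+1}-\th/2^{m+1}-\pi+2\cdot\th/2^{m+1}=b_{m+1}-\pi+\th/2^{m+1}$, again matching. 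In either case $b_{m+1}=b_m+d_{m+1}\th/2^{m+1}=b+\sum_{i=1}^{m+1}d_i\th/2^i$ and $a_{m+1}=b_{m+1}-\pi+\th/2^{m+1}$, completing the induction.

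The only genuine point requiring care — the ``main obstacle,'' though it is more of a consistency check than a difficulty — is confirming that the neighbor objects $T_{w0},T_{w1}$ really do have the coordinates I claimed, i.e., that the mutation at $T_w$ produces a halving of the local mesh parameter in the same pattern as at $v$. This follows because the inductive hypothesis guarantees $a_m,b_m$ are integer multiples of $\th/2^m$ with $a_m+\pi-b_m=\th/2^m$ minimal in the appropriate local sense along the walk, so the standard-cluster combinatorics recorded in the displayed triangle $T_{v0}\to T_v\to T_{v1}\to T_{v0}'$ applies verbatim with $\th$ replaced by $\th/2^m$. Once that is in place, the coordinate computation is the routine arithmetic carried out above, and the direction-tracking is exactly the definition of the digits $d_i$.
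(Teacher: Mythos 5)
Your inductive computation is correct, and it is essentially the argument the paper intends: the lemma is stated there without proof as bookkeeping from the triangle description at $v$, and your observation that the local mesh $a_m+\pi-b_m$ halves at each step, together with the two-case arithmetic for $d_{m+1}=0,1$, is exactly that bookkeeping made explicit. The one imprecision is your appeal to $a_m+\pi-b_m$ being ``minimal in the appropriate local sense'' (in fact $b_m-a_m$ strictly increases along the tail, so no minimality holds or is needed): what actually licenses reusing the triangle description at $w=vd_1\cdots d_m$ is only that $a_m,b_m$ are integer multiples of $\th/2^m$ with $b_m-a_m=\pi-\th/2^m$, which by the explicit coordinates of the standard cluster puts $M(a_m-\th/2^{m+1},b_m)$ and $M(a_m,b_m+\th/2^{m+1})$ in $\cT$, while the fact that the walk must continue into this finer triangle (so that the digit $d_{m+1}$ is even defined) comes from the string condition — two consecutive edges of a string cannot lie in one triangle since their composition is a relation — which is how the paper's digit convention should be read.
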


Note that in the limit as $m\to\infty$ we get that $b_m$ monotonically increases to $b_\infty$ and $a_m$ monotonically decreases to $a_\infty=b_\infty-\pi$. There is one kind of sequence which is excluded: There cannot be an integer $N$ so that $d_i=1$ for all $i\ge N$ because this would correspond to the case when the arrows all point inward after that point. As a consequence, the sequence $d_1,d_2,\cdots$ is uniquely determined by $b_\infty$ (and $b$ and $\th$) and $b\le b_\infty<b+\th$.

If we go in the other direction, $a$ will become larger than $b$ by choice of the vertex $v$. There are three cases to consider.
\begin{enumerate}
\item $a=b$.
\item $a<b$ and $a$ increases in the second tail ($a_{-1}>a,b_{-1}=b$).
\item $a<b$ and $b$ decreases in the second tail ($a_{-1}=a,b_{-1}<b$).
\end{enumerate}

\ul{Case 1}. $a=b$. In the fundamental domain, this is the point $v=(0,0)$. We have $\th=a+\pi-b=\pi$. All points $(x,y)$ in $\cT$ with $x\le y$ are in the second quadrant up to isomorphism with $-\pi<x\le0\le y<\pi$. The first tail lies here with $0\le b_\infty<\pi$. The second tail must be in the fourth quadrant with $0\le x<\pi$ and $0\ge y>-\pi$. In Case 1 we have symmetry between $a$ and $b$ and in the vertices in the second tail the roles of $a,b$ are switched. We get a sequence of 0's and 1's:
\[
	v_{-m}=ve_1e_2e_3\cdots e_m=(a_{-m},b_{-m})
\]
where
\[
	a_{-m}=-\sum_{i=1}^me_i\pi/2^i,\quad b_{-m}=a_{-m}-\pi+\pi/2^m
\]
As before, the binary digits $e_i$ are not allowed to all become equal to one after any point. Thus the $e_i$ are uniquely determined by the limits $a_{-\infty}$ and $b_{-\infty}=a_{-\infty}-\pi$ and
\[
	-\pi\le b_{-\infty}<0\le b_\infty<\pi,\quad 0\le a_{-\infty}<\pi\le a_\infty<2\pi
\]

\ul{Case 2}. $a<b$ and, if we go one step in the other direction, $a$ becomes greater than $b$ and $b_{-1}=b$. Since $\th<\pi$, either $(a+\th,b)$ or $(a,b-\th)$ lies in $\cT$ and lies in the second quadrant. In Case 2, it must be the latter and the closest point on the line $y=b$ to the right of $(a,b)$ is the point $(b+\pi-\th,b)$. This is equivalent to the point $(a-\th,b-\th)$ in the fundamental domain of $S$ and this translation of the second tail is in the triangle $\th$ units below and to the left of the triangle which contains all possible locations for the first tail. Since $S^{-1}(x,y)=(y-\pi,x-\pi)$, the coordinates of the limit point in this second triangular region are $(b_{-\infty}-\pi,a_{-\infty}-\pi)$. 
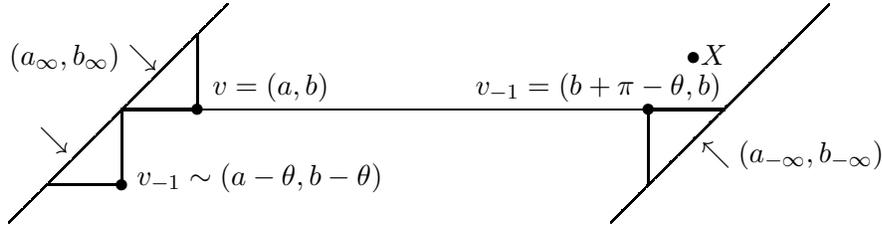
\begin{figure}[htbp]
\begin{center}
%
{
\setlength{\unitlength}{1cm}
{\mbox{
\begin{picture}(10,2.6)
\put(0,.3){
    \thinlines
      \qbezier(-.5,-.5)(1,1)(2.4,2.4)
      \qbezier(7.5,-.5)(9,1)(10.4,2.4)
      \thicklines
    \put(1,0){
          \qbezier(0,0)(-.5,0)(-1,0)
          \qbezier(0,0)(0,.5)(0,1)    
          \put(-.1,-.1){$\bullet$}
          \put(.2,0){$v_{-1}\sim(a-\th,b-\th)$}
          \put(-1.1,.5){$\searrow$}
          }
    \put(2,1){
          \qbezier(0,0)(-.5,0)(-1,0)
          \qbezier(0,0)(0,.5)(0,1)    
          \put(-.1,-.1){$\bullet$}
          \put(.2,.2){$v=(a,b)$}
          \put(-2.5,.6){$(a_{\infty},b_{\infty})\searrow$}
          }
        \put(8,1){
          \qbezier(0,0)(.5,0)(1,0)
          \qbezier(0,0)(0,-.5)(0,-1)    
          \put(-.1,-.1){$\bullet$}
          \put(-2.3,.2){$v_{-1}=(b+\pi-\th,b)$}
          \put(.7,-.7){$\nwarrow(a_{-\infty},b_{-\infty})$}
          \put(.5,.6){$\bullet X$}}
          \thinlines
          \qbezier(2,1)(5,1)(8,1)
          }
\end{picture}}
}}
\caption{Case 2: One tail is in the triangular region with corner $(a,b)$. The other tail is in the triangular region with corner $(b+\pi-\th,b)$ which is equivalent to $(a-\th,b-\th)$. $X$ is the corresponding object of $\cC/\cT$.}
\label{Case2}
\end{center}
\end{figure}
Both of these triangular regions are in the second quadrant. Thus, in Case 2 we have:
\[
	0\le b-\th\le b_{-\infty}<b\le b_\infty <b+\th\le\pi
\]
\[
	a-\th\le a_\infty<a<b+\pi-\th\le a_{-\infty}<b+\pi
\]

\ul{Case 3}. This is the same as Case 2 with the two tails reversed.

There is also Case 4 and Case 5 in the fourth quadrant which are similar to Cases 2 and 3 with the $x,y$-coordinates reversed.

\begin{prop}
The objects corresponding to the vertices in one tail of $GW=M$ converge to an element of $\RR/2\pi\ZZ$ of the form $a\pi/2^m$ if and only if the tail is eventually outward pointing.
\end{prop}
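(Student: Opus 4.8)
The plan is to reduce the statement to an elementary fact about binary expansions. First I would invoke the Lemma just above to write out the coordinates $(a_m,b_m)$ of the object of $\cT$ attached to the vertex $vd_1\cdots d_m$, namely $b_m=b+\sum_{i=1}^m d_i\th/2^i$ and $a_m=b_m-\pi+\th/2^m$. As $m\to\infty$ the $b_m$ increase monotonically to $b_\infty=b+\th\sum_{i\ge1}d_i/2^i$ and the $a_m$ decrease to $a_\infty=b_\infty-\pi$, so $b_m-a_m\to\pi$ and the objects $M(a_m,b_m)$ approach the boundary circle of the Moebius band. The key point is to read off the limiting boundary point as an element of $S^1=\RR/2\pi\ZZ$: the two ends of $M(a_m,b_m)$ are $a_m$ and $b_m+\pi$, and these converge to $a_\infty$ and $b_\infty+\pi$ respectively, which agree in $\RR/2\pi\ZZ$ since $a_\infty=b_\infty-\pi\equiv b_\infty+\pi$. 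Hence the objects of the tail converge in $S^1$ to the single point $\zeta=b_\infty+\pi$, and the proposition becomes the claim that $\zeta$ --- equivalently $b_\infty$, since $\pi$ is itself a dyadic multiple of $\pi$ --- is of the form $a\pi/2^m$ if and only if $d_i=0$ for all large $i$.

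Next I would substitute $\th=\pi/2^n$ and use that $b$ is an integer multiple of $\th$, say $b=k\th$ with $k\in\ZZ$; this gives $b_\infty=\frac{\pi}{2^n}(k+\sigma)$ with $\sigma:=\sum_{i\ge1}d_i2^{-i}\in[0,1]$, so $b_\infty$ is a dyadic multiple of $\pi$ exactly when $k+\sigma$, equivalently $\sigma$, is a dyadic rational. I would then record the elementary fact that $\sigma=\sum d_i2^{-i}$ is a dyadic rational precisely when the sequence $(d_i)$ is eventually $0$ or eventually $1$: multiplying by $2^q$ turns $\sigma$ into an integer plus the tail sum $\sum_{j\ge1}d_{q+j}2^{-j}\in[0,1]$, which is an integer only when all the $d_{q+j}$ are $0$ or all are $1$, and the converse is immediate.

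Finally I would rule out the ``eventually $1$'' alternative: a tail with $d_i=1$ for all $i\ge N$ is one whose arrows are eventually all inward pointing, an end of injective type, which was already excluded for modules of $Rep_0\LL$ just after the Lemma and so does not arise for $M=GW$. Therefore $\sigma$ is dyadic if and only if $d_i=0$ for all large $i$, which by the convention fixing the digits ($d_i=0$ exactly when the arrow at that step points away from the preceding vertex) says precisely that the tail is eventually outward pointing. The same computation applies verbatim to the other tail, using the corresponding Case formulas for $a_{-m},b_{-m}$ (with $\th$ replaced by the relevant power of $2$ times $\pi$). I do not anticipate a real obstacle; the only steps needing care are the identification of the limit in $\RR/2\pi\ZZ$ as the common limit of the two ends --- using $b_\infty-\pi\equiv b_\infty+\pi$ --- and checking that the no-eventually-inward hypothesis genuinely holds for every tail of $GW$, whether that tail is an original infinite end of $W$ (direction switching infinitely often) or a ray adjoined by $G$ to a finite end of $W$ (eventually outward).
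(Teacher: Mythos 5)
Your argument is correct and is essentially the paper's proof: the paper likewise reduces to the fact that a binary expansion $\sum d_i/2^i$ gives a dyadic multiple of $\pi$ exactly when the digits are eventually $0$ or eventually $1$, and then excludes the eventually-$1$ case because it would mean an infinite inward-pointing (injective-type) tail, which cannot occur in $GW$. You simply make explicit the coordinate computation from the preceding Lemma and the identification of the limit point on the boundary circle, which the paper leaves implicit.
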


\begin{rem}
Near the line $y=x+\pi$ (the diagonal line on the left in Figure \ref{Case2}), these tails correspond to horizontal lines. Near the line $y=x-\pi$ (the diagonal line on the right in Figure \ref{Case2}), these outwardly pointing tails are vertical lines. Note that the vertical line below $v_{-1}=(b+\pi-\th,b)$ is equal to the horizontal line to the left of $v_{-1}=(a-\th,b-\th)$ in Figure \ref{Case2}.
\end{rem}

\begin{proof}
This follows from the fact that we have excluded the possibility of sequences ending in an infinite number of 1's since these would correspond to an infinite sequence of inwardly pointing arrows. Thus every binary sequence determines a unique real number which has the form $a\pi/2^m$ if and only if it has only finitely many 1's and the rest 0's which correspond to an infinite sequence of outwardly pointing arrows.
\end{proof}

\begin{thm}\label{thm: C/T=string modules without injective tails}
$Rep_0\LL$ is isomorphic to $\cC/\cT$.
\end{thm}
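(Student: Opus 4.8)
The plan is to extend the proof of Theorem~\ref{thm: Q/T = fl mod L} from finite strings to the strings occurring in $Rep_0\LL$, in three steps: a bijection between indecomposable objects, a comparison of the (at most one-dimensional) Hom spaces, and a verification that composition of basic morphisms obeys the same rule on both sides. The organizing principle is that an indecomposable object $M(x,y)$ of $\cC/\cT$ has exactly two ends, each of which is rational or irrational, and that the number of irrational ends --- which is $0$, $1$, or $2$ --- will match the number of infinite ends of the corresponding string module in $Rep_0\LL$; the case of $0$ irrational ends is precisely Theorem~\ref{thm: Q/T = fl mod L}.

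First I would construct the object bijection. Given an indecomposable $X=M(x,y)$ of $\cC/\cT$ not in $add\,\cT$, take its associated walk in $\cT$ (finite when $X\in\cX$, and otherwise one-ended or doubly infinite according to Cases~1--3 of the subsection on infinite walks). Since $\Hom_{\cC/\cT}(\t^{-1}S,X)\neq0$ exactly when $S$ lies on this walk but is not an endpoint, the walk, translated under the correspondence $\cT\leftrightarrow Q_0$, is the underlying string of a string module $W_X$ whose support is exactly the support of $X$: a rational end of $X$ produces a finite endpoint of the walk and hence a finite end of $W_X$, an irrational end produces an infinite tail. To see that $W_X$ lies in $Rep_0\LL$ I would invoke the Proposition immediately preceding this theorem together with its coordinate Lemma: a tail converges to a dyadic number $a\pi/2^m$ iff it is eventually outward-pointing, and an eventually inward-pointing (``injective type'') tail would, by that coordinate formula, also force its limit to be dyadic; since the end of $X$ carrying the tail is irrational, neither occurs, so every infinite end of $W_X$ switches direction infinitely often. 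Conversely, an indecomposable $W\in Rep_0\LL$ of finite length gives an object of $\cX/\cT$ by Theorem~\ref{thm: Q/T = fl mod L}, and one with infinite ends gives, via $G$ and Proposition~\ref{prop: description of modules M=GW}, the string module $GW$ with two infinite ends of non-injective type; reading off the corners and the dyadic-or-irrational limits of the tails of $GW$ as in Cases~1--5 determines a unique indecomposable $X_W$ of $\cC/\cT$ whose support equals the string of $W$. That $X\mapsto W_X$ and $W\mapsto X_W$ are mutually inverse follows from the uniqueness of a locally minimal walk with prescribed string --- here is where simple connectedness of $Q$ enters --- together with the fact, established earlier, that $F$ and $G$ are mutually inverse.

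Second I would compare the Hom spaces. Both $\Hom_{\cC/\cT}(X_1,X_2)$ and $\Hom_\LL(W_1,W_2)$ are at most one-dimensional: the former by the Lemma characterizing nonzero maps in $\cC/\cT$ by the rectangle condition, the latter because any nonzero map between indecomposable string modules factors through a common substring which is connected, $Q$ being simply connected, so the space is spanned by a single basic morphism. It then remains to show a nonzero map exists on one side iff it does on the other, and for this I would reuse the geometric argument of Claim~3 in the proof of Theorem~\ref{thm: Q/T = fl mod L}, now allowing the rectangles to be unbounded: a nonzero $\Hom_\LL(W_1,W_2)$ forces the (possibly infinite) rectangle of $X_2$ to contain $X_1$ on the upper-right side of its zig-zag of $\cT$-objects, so $\Hom_{\cC/\cT}(X_1,X_2)\neq0$; and a nonzero map $X_1\to X_2$ forces $X_1$ to lie between $X_2$ and that zig-zag, which when pulled back through the construction of $W_1$ exhibits a common substring of $W_1$ and $W_2$. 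The only genuinely new situations are those in which an infinite tail of one string runs alongside the boundary of the other string's rectangle, and these are controlled by the coordinate inequalities recorded in Cases~2--5. Finally, exactly as in Theorem~\ref{thm: Q/T = fl mod L}, composition of basic morphisms is ``the basic morphism $X_1\to X_3$ if one exists, and $0$ otherwise'' in both categories because $Q$ is simply connected, so the object bijection extends additively to an equivalence $Rep_0\LL\cong\cC/\cT$, which then automatically matches the abelian structures of the two sides.

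The step I expect to be the main obstacle is the careful handling of the infinite tails, in two places. First, pinning down that $W_X$ always lands in $Rep_0\LL$ and never on the projective-type or injective-type boundary: this is exactly the point at which ``the end of $X$ is irrational'' must be translated into ``the binary sequence $d_1,d_2,\cdots$ of the corresponding tail is neither eventually $0$ nor eventually $1$,'' which requires the coordinate lemmas and the convergence Proposition above. Second, the Hom comparison for infinite-dimensional string modules, where one must check that the finite-string picture of overlapping rectangles still accounts for all of $\Hom$ once tails are allowed to accumulate on the boundary of a rectangle --- i.e., the bookkeeping of Cases~2--5. Everything else should be a direct transcription of the arguments already carried out for $\cX/\cT$ and the finite-length modules over $\LL$.
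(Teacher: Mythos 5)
Your proposal is correct in outline, and its first step (the object bijection via walks, the binary-sequence coordinate lemmas, and the exclusion of eventually-constant tails) is essentially the paper's own lemma preceding the theorem. Where you genuinely diverge is the Hom comparison. You propose to redo the geometric argument of Claim~3 of Theorem~\ref{thm: Q/T = fl mod L} directly for unbounded rectangles, which forces you to (i) establish from scratch that $\Hom_\LL(W_1,W_2)$ between two \emph{infinite} string modules is at most one-dimensional and spanned by a basic (graph) morphism along a common substring, and (ii) re-examine every overlap configuration in which a tail accumulates on the boundary of the other object's rectangle (your Cases 2--5 bookkeeping). The paper instead reduces to the finite-length case: it exhausts each $W_i$ by finite-length submodules $W_{in}$, applies Theorem~\ref{thm: Q/T = fl mod L} to get $\Hom(W_{1n},W_{2m})=\Hom_{\cX/\cT}(X_{1n},X_{2m})$, and then passes to the limits in $m$ and $n$; the only geometric input is a continuity check at the limit, namely that no object of $\cT$ and no shift $TX_{1n}$ can appear on the limiting lines because $X_{1n}$ has dyadic coordinates while the limiting coordinates have infinite binary expansion, and that $\Hom_{\cC/\cT}(X_{1n},X_2)$ stabilizes since only finitely many objects of $\cT$ separate $X_{1n}$ from $X_2$ once it is nonzero. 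The trade-off is clear: the paper's route never needs a classification of morphisms between two infinite-dimensional string modules (every Hom it computes has a finite-length module on one side), at the price of the limit-compatibility argument where the irrationality of the ends does the work; your route gives a more self-contained geometric picture but must supply the graph-map/one-dimensionality statement for infinite strings, which you assert via simple connectedness of $Q$ but which is precisely the step the paper's truncation argument is designed to avoid. You correctly flag both of these as the main obstacles, so the plan is sound, but be aware that point (i) is a real lemma to prove, not a direct transcription of the finite case.
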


The argument is analogous to the finite case.

\begin{lem} There is a bijection between the string modules in $Rep_0\LL$ and the isomorphism classes of indecomposable objects of $\cC/\cT$.
\end{lem}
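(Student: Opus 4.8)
\emph{Proof strategy.} The plan is to produce a bijection on objects exactly parallel to Claim 1 in the proof of Theorem \ref{thm: Q/T = fl mod L}, and then to check that it carries the defining conditions of $Rep_0\LL$ over to the condition ``nonempty support''. An isomorphism class of indecomposable objects of $\cC/\cT$ is represented by an indecomposable $X=M(x,y)$ of $\cC$ with nonempty support --- the objects of $add\,\cT$ become zero in $\cC/\cT$ --- chosen, as in Theorem \ref{thm: Q/T = fl mod L}, to have maximal coordinates within its isomorphism class. If $X$ has both ends rational it lies in $\cX$ and is already matched with a finite-length string module by Theorem \ref{thm: Q/T = fl mod L}; the finite-length modules in $Rep_0\LL$ are exactly those with no infinite end, so the genuinely new content is the case where $X$ has one or two irrational ends.

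\emph{From objects to string modules.} To such an $X$ I would attach the infinite walk of Section 2.4 (Cases 1, 2, 3). By Lemma \ref{lem:properties of minimal walk} this walk is well defined and locally minimal, and its vertex set is the support of $X$; being a zig-zag staircase, and $Q$ being simply connected, that vertex set is the support of a unique standard string module $W_X$. Two facts need checking. First, $W_X$ has as many infinite ends as $X$ has irrational ends: in Cases 1 and 2 the walk has a single finite endpoint and one infinite tail running toward the side of $R_X$ on which the irrational end sits, and in Case 3 it has two infinite tails and no endpoint, while a rational end always produces a finite endpoint. Second, each infinite end of $W_X$ switches direction infinitely often --- this is automatic, since an infinite tail of the walk reads in $Q$ as a zig-zag $\cdots\to A_i\leftarrow B_i\to A_{i+1}\leftarrow\cdots$ and so is of neither projective nor injective type. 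Hence $W_X\in Rep_0\LL$.

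\emph{From string modules to objects.} Given an indecomposable $W\in Rep_0\LL$, form $M=GW$, the string module with two infinite ends obtained by extending each finite end of $W$ by its canonical outward ray, so that $FM=W$ (the construction described just before Proposition \ref{prop: description of modules M=GW}). By the coordinate lemma for the vertices $vd_1\cdots d_m$ together with the case analysis (Cases 1--5), the two infinite tails of $M$ converge in $\RR/2\pi\ZZ$ to well-defined points $x,y$; put $\Phi(W)=M(x,y)$. By the proposition that a tail of $GW$ converges to a point of the form $a\pi/2^m$ precisely when it is eventually outward pointing, the infinitely-switching ends of $W$ give irrational ends of $\Phi(W)$ while the outward rays added to the finite ends of $W$ give rational ones; thus $\Phi(W)$ lies in $\cX$ iff $W$ has finite length, and $\Phi$ then agrees with the inverse of the bijection of Theorem \ref{thm: Q/T = fl mod L}.

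\emph{Mutual inverseness, and the main difficulty.} It remains to verify $\Phi(W_X)\cong X$ and $W_{\Phi(W)}\cong W$. This is coordinate bookkeeping: the infinite walk attached to $M(x,y)$ accumulates exactly at the corner(s) $(y-\pi,y)$ and/or $(x,x-\pi)$ of $R_X$, which are precisely the limits of the two tails of $G(W_X)$; and conversely a finite endpoint of the walk attached to $\Phi(W)$, once extended outward by $G$, returns to the corresponding rational end. The step I expect to be the main obstacle is making this matching uniform and rigorous across all of Cases 1--5 (and the finite case) --- in effect, pinning down a precise dictionary between the geometric description of walks (each step vertically up or horizontally to the left with arrows pointing up and right, with prescribed accumulation at the boundary of the Moebius band) and the quiver-theoretic description of the ends of a string module (switching infinitely versus eventually outward pointing, and the binary expansion this induces on the limit point). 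Once this dictionary is established --- so that the number of ends, their type, and the limit points they determine all transport correctly in both directions --- the bijection follows at once, as in the finite case.
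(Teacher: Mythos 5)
Your construction is the same as the paper's in outline: send $W$ to $M=GW$, read off the limits $(a_{-\infty},b_\infty)$ of the two tails via the coordinate lemma and the Cases 1--5 analysis, and in the other direction read the walk of $X$ as a fully alternating string. But the step you defer as ``the main obstacle'' -- establishing the dictionary and then checking $\Phi(W_X)\cong X$ and $W_{\Phi(W)}\cong W$ uniformly across all cases -- is precisely the content of the lemma, and leaving it as expected bookkeeping is a genuine gap: nothing in your write-up actually shows that two distinct modules in $Rep_0\LL$ cannot produce the same limit point, nor that every pair of distinct points of $S^1$ not giving an object of $\cT$ is realized. The paper closes this without any two-sided inverse check. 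The key observation you are missing is that the encoding of a tail by its digit string $d_1d_2\cdots$ (starting from the vertex $v$ minimizing $b-a$), together with the exclusion of injective-type tails, is already a bijection onto points: by the coordinate lemma the limit of the tail is $b_\infty=b+\sum d_i\th/2^i$, the forbidden tails are exactly the digit strings ending in all $1$'s, and binary expansions with no terminal string of $1$'s are unique and realize every value in $[b,b+\th)$. Hence $M\mapsto M(a_{-\infty},b_\infty)$ is a bijection from string modules with two non-injective infinite ends onto pairs of distinct points of $S^1$, i.e.\ onto indecomposables of $\cC$; projective strings go exactly to objects of $\cT$; and by Proposition \ref{prop: description of modules M=GW} together with $F(GW)=W$, the non-projective such modules are exactly the $GW$ with $W\in Rep_0\LL$, and $W$ is recovered from $GW$. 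Composing these bijections gives the statement at once, with no case-by-case matching of the two constructions.

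One smaller imprecision to repair if you do keep your two-sided construction: the string module $W_X$ must be supported on the support of $X$, which is the walk of $X$ \emph{minus its endpoints} (in Cases 1 and 2 the endpoint $Y$ lies on the boundary of the open rectangle, hence outside the support); the endpoints reappear only as the sources of the outward rays added by $G$, exactly as the corners $T_a,T_b$ do in the finite case of Theorem \ref{thm: Q/T = fl mod L}. If you take the full vertex set of the walk, $GW_X$ acquires an extra triangle at each rational end and the composite $\Phi(W_X)$ no longer returns $X$.
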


\begin{proof}
Take any object $W$ of $Rep_0\LL$. Form $M=GW$ as described above. The the corresponding object of $\cC/\cT$ is the point $X$ with coordinates $(a_{-\infty},b_\infty)$. The vertices in the support of $M$ form an infinite zig-zag starting at the right at the line $y=x-\pi$ and going left and up to the line $y=x+\pi$. The limiting horizontal line is $y=b_\infty$ at the top of the zig-zag and the limiting vertical line is $x=a_{-\infty}$ at the right of the zig-zag. In the case when $W$ has finite length, these form the top and right side of the rectangle $[a_1,b_1]\times [b_2,a_2]$ in the proof of Theorem \ref{thm: Q/T = fl mod L}.

Going case-by-case, the objects $M=GW$ of Case 1 correspond to the nonzero objects $X=M(x,y)$ of $\cC/\cT$ where $0\le x,y<\pi$. These are the points with the property that $\Hom(M(0,0),X)\neq0$. Then $y=b_\infty\in[0,\pi)$ and $x=a_{-\infty}$ is also in the half-open interval $[0,\pi)$. In Cases 2 and 3, $X=M(x,y)=M(a_{-\infty},b_\infty)$ lies in the second quadrant. In Cases 4 and 5, $X$ lies in the fourth quadrant.

The disallowance of injective tails gives a bijection between possible ends and the points on the unit circle. Pairs of distinct points $a,b$ on the unit circle correspond bijectively to the objects $M(a,b+\pi)$ of $\cC$. And the object lies in $\cT$ if and only if the corresponding string module is projective. By Proposition \ref{prop: description of modules M=GW} projective modules are the only modules $M$ without injective tails which cannot occur as $GW$ for any object $W$ of finite length. Thus we have the desired bijection.
\end{proof}

\begin{proof}[Proof of Theorem \ref {thm: C/T=string modules without injective tails}]
As in the proof of Theorem \ref {thm: Q/T = fl mod L}, it suffices to show that
\[
	\ul\Hom(M_1,M_2)=\Hom(W_1,W_2)=\Hom_{\cC/\cT}(X_1,X_2)
\]
where $X_1,X_2$ are the objects of $\cC/\cT$ corresponding to $M_1,M_2$. We can do this by reducing to the case when $W_i$ have finite length.

For each $W_i$ take a sequence of submodules $W_{in}$ of finite length so that $W_{in}$ containing any finite end of $W_i$ and also contains $n$ sinks in each infinite tail starting at some fixed vertex in the support of $W_i$. Let $X_{in}$ be the corresponding objects of $\cX/\cT$. Then, by Theorem \ref {thm: Q/T = fl mod L} we have
\[
	\Hom(W_{1n},W_{2m})=\Hom_{\cX/\cT}(X_{1n},X_{2m})
\]
Taking the limit as $m\to \infty$ we get
\[
	\Hom(W_{1n},W_2)=_{(1)}\lim_\to\Hom(W_{1n},W_{2m})=\lim_\to\Hom_{\cX/\cT}(X_{1n},X_{2m})=_{(2)}\Hom_{\cC/\cT}(X_{1n},X_{2})
\]
where (1) is clear and (2) follows from the fact that homorphisms vanish in a limit only when the limit line contains an object of $\cT$ or if it contains $TX_{1n}$ at the outer edge of the domain of the functor $\Hom_\cC(X_{1n},-)$. But this cannot happen since $X_{1n}$ has coordinates of the form $a\pi/2^k$ and limiting lines have coordinated with $\pi$ times a number with infinite binary expansion.

Now take the limit as $n\to \infty$, for which there is no problem since there are only finitely many objects of $\cT$ which can lie between $X_{1n}$ and $X_2$ at the point when $\Hom_{\cC/\cT}(X_{1n},X_2)\neq0$.
\end{proof}

\bibliographystyle{amsplain}
 

%
%
\end{document}